\numberwithin{equation}{section}
\theoremstyle{plain}
\newtheorem{proposition}{ Proposition}[section]
\newtheorem{lemma}[proposition]{Lemma}
\newtheorem{corollary}[proposition]{Corollary}
\theoremstyle{definition}
\newtheorem{remark}[proposition]{Remark}
\newtheorem{definition}[proposition]{Definition}
\newcommand{\mc}[1]{{\mathcal #1}}
\newcommand{\norm}[1][\cdot]{\Vert #1\Vert}
\DeclareMathOperator{\supp}{supp}
\DeclareMathOperator{\range}{range}
\begin{document}
\title{An Indecomposable and unconditionally saturated Banach space}
\author{Spiros A. Argyros}
\address{S.A. Argyros\,\,\,Department of Mathematics,
National Technical University of Athens}
\email{sargyros@math.ntua.gr}
\author{Antonis Manoussakis}
\address{A. Manoussakis\,\,\,Department of Sciences, Section of
Mathematics, Technical University of Crete}
\email{amanouss@science.tuc.gr}
\keywords{Indecomposable Banach space, unconditionally saturated, reflexive Banach space.}
\subjclass{46B20}
\begin{abstract}
We construct an indecomposable reflexive Banach space $X_{ius}$
such that every infinite dimensional closed  subspace contains an
unconditional basic sequence. We also show that every operator
$T\in \mathcal{B}(X_{ius})$ is of the form $\lambda I+S$ with $S$
a strictly singular operator.
\end{abstract}
\maketitle
\section {Introduction}
The aim of this paper is to present a Banach space which is not
the sum of two infinite dimensional closed subspaces $Y$, $Z$ with
$Y\cap Z=\{0\}$ and every closed subspace of it contains an
unconditional basic sequence. We shall denote this space as
$X_{ius}$. W.T. Gowers' famous dichotomy, \cite{G3}, provides an
alternative description of this space. Namely $X_{ius}$ is an
Indecomposable Banach space not containing any Hereditarily
Indecomposable (H.I.) subspace. The problem of the existence of
such spaces was posed by H.P. Rosenthal  and it is stated in
\cite{G2}. The interest for such spaces arises from the
coexistence of conditional (indecomposable) and unconditional
(unconditionally saturated) structure on them. This is a free
translation of W.T.Gowers' comments before stating the problem of
the existence of such spaces in \cite{G2} (Problem 5.11). We
should mention that Indecomposable spaces which are not H.I. are
already known. For example, \cite{AF} provides reflexive H.I.
spaces $X$ such that $X^{*}$ contains an unconditional basic
sequence. The methods used in \cite{AF} do not seem to be able to
provide H.I. spaces $X$ with $X^{*}$ unconditionally saturated.

The space presented in this paper is built following ideas used
for the construction of H.I. Banach spaces. The method we follow
is an adaptation  of \cite{AD}
 constructions as they were
extended in \cite{AT}. Both are variations of the fundamental
discovery of W.T. Gowers and B. Maurey, \cite{GM}. In our case we
use as an unconditional frame a mixed Tsirelson space
$T[(\mc{A}_{n_{j}},\frac{1}{m_{j}})_{j}]$ which is a space sharing
similar properties with Th. Schlumprecht's space $S$, \cite{Sh}.
The norming set $K$ of the space $X_{ius}$ is a subset of the unit
ball of the dual of $T[(\mc{A}_{n_{j}},\frac{1}{m_{j}})_{j}]$. The
only difference that the space $X_{ius}$ has from a corresponding
construction of a H.I. space concerns the definition of the
special functionals. The key observation that changing the special
functionals one could obtain interesting non H.I. spaces is due to
W.T.Gowers and it was used for the solution of important and long
standing problems in the theory of Banach space, \cite{G}.

For the space $X_{ius}$ we need the special functionals  to be
defined such that the following geometric property holds in the
space. For every $Y=\langle e_{n}\rangle_{n\in M}$, $M\in
[\mathbb{N}]$, and $(e_{n})_{n\in \mathbb{N}}$ the natural basis
of $X_{ius}$, the quotient map $Q:X_{ius}\to X_{ius}/Y$ is
strictly singular. This is equivalent  to say that $dist(S_{Z},
S_{Y})=0$ for all $Z$ infinite dimensional subspace of $X_{ius}$.
This  property clearly holds in the case of H.I. spaces. In our
case we define the special functionals such that the
aforementioned property holds and on the other hand we have
attempted to keep the dependence inside of each special functional
as small as possible. Thus going deeper in the structure of any
subspace of $X_{ius}$ the action of the special functionals
becomes negligible, which permits us to find unconditional basic
sequences. Another property of the space $X_{ius}$ concerns the
bounded linear operators. Namely every  $T: X_{ius}\to X_{ius}$ is
of the form $T=\lambda I+S$, where $S$ is strictly singular. Thus
$X_{ius}$ is not isomorphic to any of its proper subspaces.

\section{Definition of the space $X_{ius}$}
We shall use the standard notation. Thus $c_{00}$ denotes the
linear space of all eventually zero sequences and for $x\in
c_{00}$ we denote by $\text{supp}x=\{n: x(n)\not=0\}$ and by
$\text{range}(x)$ the minimal interval of $\mathbb{N}$ containing
 $\text{supp}x$. Also for $x,y\in c_{00}$ by $x<y$ we mean that
$\max\text{supp}x<\min\text{supp}y$. We shall also use the
standard results from the theory of bases of Banach spaces as they
are described in \cite{LT}.

We choose two strictly increasing sequences  $(n_{j})_{j}$,
$(m_{j})_{j}$ of positive integers, such that
\begin{enumerate}
\item[(i)] 
$m_{1}=2$  and  $m_{j+1}= m_{j}^{5}$
\item [(ii)]
$n_{1}=4$  and $n_{j+1}=(4n_{j})^{s_{j}}$ where
$2^{s_{j}}\geq m_{j+1}^{3}$\,.
\end{enumerate}
Let $\mathbf{Q}$ be the set of scalars sequences with finite
nonempty support, rational coordinates and maximum at most $1$ in
modules. We also set
\begin{align*}
\mathbf{Q_s} = \big\{(x_1,f_1,&\ldots,x_n,f_n):\;
x_i,f_i\in\mathbf{Q},\;i=1,\ldots,n    \\
& \textrm{range}(x_{i})\cup\textrm{range}(f_{i})<
\textrm{range}(x_{i+1})\cup\textrm{range}(f_{i+1}) \;\forall i<n
 \big\}.
\end{align*}
We  consider a coding function $\sigma$ (i.e. $\sigma$ is an
injection) from  $\mathbf{Q_s}$ to the set $\{2j
:j\in\mathbb{N}\}$ such that for every $\phi=(x_{1},f_{1},\ldots,
x_{n},f_{n})\in\mathbf{Q_s}$
\begin{align}
 & \sigma (x_{1},f_{1},\ldots, x_{n-1},f_{n-1})
 <\sigma (x_{1},f_{1},\ldots, x_{n},f_{n})\label{es2} \\
 & \max\{\text{range}(x_n)\cup\text{range}(f_n)\}
 \le m_{\sigma(\phi)}^{\frac{1}{2}}  \label{es1}
\end{align}
Although $x_i,f_i$ are elements of $c_{00}$ their role in the
space $X_{ius}$  we shall define is quite different. Namely $x_i$
will be elements of the space itself and $f_i$ elements of its
dual $X_{ius}^*$. For similar reasons we shall denote the standard
basis of $c_{00}$ either by $(e_n)_n$ or $(e_n^*)_n$.
\begin{definition}\label{21}
A sequence $\phi=(x_{1},f_{1},\ldots,
x_{2k},f_{2k})\in\mathbf{Q_s}$ is said to be a {\bf special
sequence of length} $\mathbf{2k}$ provided that
\begin{equation}\label{co1}
x_{1}=\frac{1}{n_{2j}}\sum_{l=1}^{n_{2j}}e_{1,l},\qquad
f_{1}=\frac{1}{m_{2j}}\sum_{l=1}^{n_{2j}}e^{*}_{1,l},\,\,
\text{for some}\,\,j\in\mathbb{N}, \,
\text{such that}\,\,\,m^{1/2}_{2j}>2k,
\end{equation}
where $(e_{1,l})_{l=1}^{n_{2j}}$ is a subset of the standard basis
of $c_{00}$ of cardinality $n_{2j}$, and for every $1\leq i\leq
k$, setting $\phi_{i}=(x_{1},f_{1},\ldots,x_{i},f_{i})$
\begin{equation}
\norm[f_{2i}]_{\infty}\leq
   \frac{1}{m_{\sigma(\phi_{2i-1})}},\quad |f_{2i}(x_{2i})|\le
   \frac{1}{m_{\sigma(\phi_{2i-1})}},\label{co3}
 \end{equation}
\begin{equation}\label{c02}
\text{if } i<k \text{ then }
x_{2i+1}=\frac{1}{n_{\sigma(\phi_{2i})}}
\sum_{l=1}^{n_{\sigma(\phi_{2i})}}e_{2i+1,l},\quad f_{2i+1} =
\frac{1}{m_{\sigma(\phi_{2i})}} \sum_{l=1}^{n_{\sigma(\phi_{2i})}}
e^{*}_{2i+1,l},
\end{equation}
where for every $i\geq 1$,
$(e_{2i+1,l})_{l=1}^{n_{\sigma(\phi_{2i})}}$ is a subset of the
standard basis of $c_{00}$ of cardinality $n_{\sigma(\phi_{2i})}$.
\end{definition}

\textbf{The norming set of the space $X_{ius}$.}

The norming set $K$ will be equal to the union
$\cup_{n=0}^{\infty}K_{n}$ and the sequence $(K_{n})_{n}$ is
increasing and inductively defined. The inductive definition of
$K_{n}$ goes as follows:

We set  $$ K_{0}^{0}=K_{0}=\{\pm
e_{n}^*:n\in\mathbb{N}\}\,\,\text{and}\,\,\,
K^{j}_{0}=\emptyset\,\,\text{for}\,\,j=1,2,\ldots\, .
$$
Assume that $K_{n-1}=\cup_{j}K_{n-1}^{j}$ has been defined.
Then we set,

(a) for  $j\in\mathbb{N}$ $$ K_{n}^{2j}=K_{n-1}^{2j}\cup
\{\frac{1}{m_{2j}}\sum_{i=1}^{d}f_{i}: d\leq
n_{2j},\,f_{1}<\ldots<f_{d},\, f_{i}\in K_{n-1}\}\,. $$

(b) For $j\in\mathbb{N}$ and every
$\phi=(x_{1},f_{1},\ldots,x_{n_{2j+1}},f_{n_{2j+1}})$ special
sequence  of length $n_{2j+1}$, (see Definition~\ref{21}), such
that $f_{2i}\in K_{n-1}^{\sigma(\phi_{2i-1})}$ for
$i=1,\ldots,n_{2j+1}/2$ (where
$\phi_{2i-1}=(x_1,f_1,\ldots,x_{2i-1},f_{2i-1})$) we define the
set
\begin{align}\label{ek0}
K_{n,\phi}^{2j+1}=\Bigl\{\frac{\pm 1}{m_{2j+1}}&
E(\lambda_{f^{\prime}_{2}}f_{1}+f^{\prime}_{2}+\ldots+
\lambda_{f^{\prime}_{n_{2j+1}}}f_{n_{2j+1}-1}+f^{\prime}_{n_{2j+1}})
\,:
\\
 &E\,\,\text{interval of}\,\,\mathbb{N},\,\,
 \text{supp}f^{\prime}_{2i}=\text{supp}f_{2i},\,\,
 f^{\prime}_{2i}\in K_{n-1}^{\sigma(\phi_{2i-1})},\,\,
 \\
& |g(x_{2i})|\le\frac{1}{m_{\sigma(\phi_{2i-1})}}\,\,\textrm{for all}\,\,g\in K_{n-1}^{\sigma(\phi_{2i-1})}
 \notag \\
 & \lambda_{f^{\prime}_{2i}}=
 f^{\prime}_{2i}(m_{\sigma(\phi_{2i-1})}x_{2i})\,\,\, \,
 \text{if}\,\,f^{\prime}_{2i}(x_{2i})\not=
 0,\,\,\,\,\, \frac{\pm 1}{n^{2}_{2j+1}}\,\,
 \text{otherwise}\Bigr\}\,.\notag
\end{align}
We define
\begin{align*}\label{ek1}
K_{n}^{2j+1}=\cup\{K_{n,\phi}^{2j+1}: \phi\,\,\text{is a
 special sequence of length $n_{2j+1}$}\}\cup
K_{n-1}^{2j+1}\,,
\end{align*}
and finally we set
$$
K_{n}=\cup_{j}K_{n}^{j}\,.
$$
\noindent This completes the inductive definition of $K_{n}$ and
we set, $$ K=\cup_{n}K_{n}\,. $$

Let us observe that the set $K$ satisfies the following properties
\begin{enumerate}
\renewcommand{\labelenumi}{(\roman{enumi})}
\item[(i)] It is symmetric and for each
$f\in K$, $\|f\|_{\infty}\le 1$.
\item[(ii)] It is closed under interval projections (i.e.
it is closed in the
restriction of its elements on intervals).
\item[(iii)] It is closed under the
$(\mathcal{A}_{n_{2j}},\frac{1}{m_{2j}})$ operations (i.e. for
$f_1<f_2<\cdots<f_d$ in $K$ with $d\le n_{2j}$ we have that
$\frac{1}{m_{2j}}\sum\limits_{l=1}^df_l\in K)$.
\item[(iv)] If $f\in K$ then either
$f=\pm e_n^*$ or $f\in K_n^j$ for $n\ge 1$, $j\in \mathbb{N}$. In
the later case we define the {\bf weight} of $f$ as $w(f)=m_j$.
Note that $w(f)$ is not necessarily unique.
 \end{enumerate}
The space $X_{ius}$ is the completion of the space
$(c_{00},\norm[\cdot]_K)$ where
$$ \norm[x]_K=\sup\{\langle f,x\rangle :
 f\in K\}\,.
$$
From the definition of the norming set $K$ it follows easily that
$(e_{n})_n$ is a bimonotone basis of $X_{ius}$. Also it is easy to
see, using (iii), that the basis $(e_n)_n$ is boundedly complete.
Indeed, for $x\in c_{00}$ and $E_1<E_2<\cdots<E_{n_{2j}}$
intervals of $\mathbb{N}$ it follows from property (iii) of the
norming set that,
$$
\norm[x]\geq \frac{1}{m_{2j}}\sum_{i=1}^{n_{2j}}\norm[E_i x]\,\,.
$$
Also from the choice of the sequences $(n_{i})_{i}$, $(m_{i})_{i}$
it follows that $\frac{n_{2j}}{m_{2j}}$ increases to infinity.
These observations easily  yield that the basis is boundedly
complete.

To prove that the space $X_{ius}$ is reflexive we need to show
that the basis is shrinking. This requires some further work and
we will present the argument later.
\begin{lemma} Let
 $\phi=(x_{1},f_{1},\ldots,x_{n_{2j+1}},f_{n_{2j+1}})$
 be a special sequence of length $n_{2j+1}$ such that:
 \begin{enumerate}
\item[(a)] $\{f_i:\; i=1,\ldots,n_{2j+1}\}\subset K$ and for
$i\ge 2$,
$w(f_i)=m_{\sigma(\phi_{i-1})}$.
\item[(b)] For $1\le i\le n_{2j+1}/2$,
$\norm[w(f_{2i})x_{2i}]\leq 1.$
\end{enumerate}
Then there exists $n\in\mathbb{N}$ such that $K_{n,\phi}^{2j+1}$
is
 nonempty.
 \end{lemma}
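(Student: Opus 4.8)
The plan is to exhibit, for a suitably large $n$, an explicit element of $K_{n,\phi}^{2j+1}$. Recall from the inductive construction that $K_{n,\phi}^{2j+1}$ is introduced exactly for those special sequences $\phi$ of length $n_{2j+1}$ for which $f_{2i}\in K_{n-1}^{\sigma(\phi_{2i-1})}$ holds for every $i=1,\ldots,n_{2j+1}/2$, and that a typical element is formed by replacing each even functional $f_{2i}$ by an $f'_{2i}$ satisfying $\supp f'_{2i}=\supp f_{2i}$ and $f'_{2i}\in K_{n-1}^{\sigma(\phi_{2i-1})}$, subject to the requirement that \emph{every} $g\in K_{n-1}^{\sigma(\phi_{2i-1})}$ obey $|g(x_{2i})|\le 1/m_{\sigma(\phi_{2i-1})}$. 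Two things must therefore be secured: that all the even functionals sit at one common level $n-1$, and that this last, supremum-type side condition is met. Notice that this is genuinely stronger than condition \eqref{co3} built into the definition of a special sequence, which only controls $|f_{2i}(x_{2i})|$.

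The first point I would settle from hypothesis (a). As $w(f_{2i})=m_{\sigma(\phi_{2i-1})}$, property (iv) of the norming set places $f_{2i}$ in some $K_{p_i}^{\sigma(\phi_{2i-1})}$; and since each family $(K_n^{r})_n$ is increasing in $n$, the choice $n-1=\max_i p_i$ gives $f_{2i}\in K_{n-1}^{\sigma(\phi_{2i-1})}$ simultaneously for all $i$. Thus $K_{n,\phi}^{2j+1}$ is a legitimately defined set for this $n$.

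Hypothesis (b) is what disposes of the side condition, and this is the real content of the lemma. Indeed (b) reads $\norm[w(f_{2i})x_{2i}]\le 1$, that is $\norm[m_{\sigma(\phi_{2i-1})}x_{2i}]\le 1$ by (a); since the norm of the space is $\sup\{|g(\cdot)|:g\in K\}$ ($K$ being symmetric), this says precisely that $|g(x_{2i})|\le 1/m_{\sigma(\phi_{2i-1})}$ for every $g\in K$, and a fortiori for every $g\in K_{n-1}^{\sigma(\phi_{2i-1})}\subset K$. Hence the side condition holds automatically, for this $n$ (and in fact for any $n$).

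With both requirements in place, the natural move is to take the simplest admissible choice $f'_{2i}=f_{2i}$ for each $i$, for which $\supp f'_{2i}=\supp f_{2i}$ and $f'_{2i}\in K_{n-1}^{\sigma(\phi_{2i-1})}$ hold trivially. Setting $\lambda_{f'_{2i}}=f_{2i}(m_{\sigma(\phi_{2i-1})}x_{2i})$ when $f_{2i}(x_{2i})\ne 0$ and $\lambda_{f'_{2i}}=1/n_{2j+1}^{2}$ otherwise, and choosing $E=\mathbb{N}$ together with the $+$ sign, the functional $\frac{1}{m_{2j+1}}\bigl(\lambda_{f'_2}f_1+f'_2+\cdots+\lambda_{f'_{n_{2j+1}}}f_{n_{2j+1}-1}+f'_{n_{2j+1}}\bigr)$ is a member of $K_{n,\phi}^{2j+1}$, so that set is nonempty. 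The single real obstacle here is conceptual rather than computational: one has to recognize that (b) is tailored exactly to kill the supremum side condition, after which choosing $f'_{2i}=f_{2i}$ avoids manufacturing any new functionals and the verification reduces to bookkeeping.
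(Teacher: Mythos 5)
Your proof is correct. The paper states this lemma without giving a proof, and your argument supplies exactly the intended routine verification: hypothesis (a) together with property (iv) of $K$ and the monotonicity of $(K_{n}^{r})_{n}$ places all the $f_{2i}$ in a common level $K_{n-1}^{\sigma(\phi_{2i-1})}$, hypothesis (b) (since $\norm[\cdot]_K$ is the supremum over the symmetric set $K$) gives the supremum-type side condition $|g(x_{2i})|\le 1/m_{\sigma(\phi_{2i-1})}$ for all $g\in K_{n-1}^{\sigma(\phi_{2i-1})}$, and the choice $f'_{2i}=f_{2i}$, $E=\mathbb{N}$ with the prescribed $\lambda_{f'_{2i}}$ then exhibits an element of $K_{n,\phi}^{2j+1}$.
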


\textit{Notation.} For every $\phi$ special sequence of length
$n_{2j+1}$ such  that $K_{n,\phi}^{2j+1}\neq \emptyset$ for some
$n$ we define $K_{\phi}=\cup_{n}K_{n,\phi}^{2j+1}$.
\begin{remark}
Let us point out that in the definition of the special sequences
we have attempted to connect averages of the basis with block
vectors that are quite  freely chosen. This will be used to show
that the quotient map from the space to the space $X_{ius}/\langle
e_{n}\rangle_{n\in M}$ is a strictly singular operator. Moreover
we keep the dependence only between $f_{2i-1}$ and the family
$\{g\in K:\;w(g)=w(f_{2i}),\;\supp(g)=\supp(f_{2i})\}$ to ensure
that the space $X_{ius}$ is unconditionally saturated.
\end{remark}
\begin{definition}[The tree $\mathbf{\mathcal{T}}_{f}$
 of a functional $f\in K$] \label{24}
Let $f\in K$. We call \emph{tree} of $f$ (or tree corresponding to
the analysis of $f$) every finite family $\mathcal{T}_{f}
   =(f_{\alpha})_{\alpha\in\mc{A}}$ indexed by a
 finite tree $\mc{A}$ with a unique
 root $0\in\mc{A}$ such that the following conditions are
 satisfied:

1) $f_0=f$ and $f_{\alpha}\in K$ for each $\alpha\in \mathcal{A}$.

2) If $\alpha\in\mc{A}$ is terminal node then
$f_{\alpha}\in K_{0}$.

3) For every $\alpha\in \mc{A}$ which is not terminal,
denoting by $S_{\alpha}$ the set of the
immediate successors of $\alpha$,
exclusively one of the following two holds:
 \begin{enumerate}
 \item[(a)] $S_{\alpha}=\{\beta_1,\ldots,\beta_d\}$ with
  $f_{\beta_1}<\cdots<f_{\beta_d}$
 and there exists $j\in\mathbb{N}$ such that
   $d\leq n_{2j}$, and
   $f_{\alpha}=\frac{1}{m_{2j}}\sum\limits_{i=1}^d f_{\beta_i}$.
 \item[(b)] There exists a special sequence
$\phi=(x_{1},f_{1}\ldots,x_{n_{2j+1}},f_{n_{2j+1}})$ of length
$n_{2j+1}$, an interval $E$ and $\varepsilon\in\{-1,1\}$ such that
$f_{\alpha}=\frac{\varepsilon}{m_{2j+1}}
 \sum\limits_{i=1}^{n_{2j+1}/2}
 E(\lambda_{f^{\prime}_{2i}}f_{2i-1}+f^{\prime}_{2i})\in K_{\phi}$
and $\{f_\beta:\;\beta\in S_{\alpha}\}=
\{Ef_{2i-1}:\;Ef_{2i-1}\neq 0\}\cup
 \{Ef_{2i}^{\prime}:\;Ef_{2i}^{\prime}\neq 0\}$.
\end{enumerate}
\end{definition}
It follows from the inductive definition of $K$ that every $f\in
K$ admits a tree, not necessarily unique.

\section{The space $X_{ius}$ is unconditionally saturated}
This section is devoted to show that the space $X_{ius}$ is
unconditionally saturated. We start with the following: We set
$$
\widetilde{K}=\{\pm e_{n},\,\frac{1}{m_{2j}} \sum_{i\in F}\pm
e_{i}: \# F\leq n_{2j},\, j\in\mathbb{N}\}\cup\{0\}\,\,.
$$
Clearly $\widetilde{K}$ is a subset of the norming set $K$ and it
is easily checked that $\widetilde{K}$ is a countable and compact
set (in the pointwise topology). It is well known that the space
$C(\widetilde{K})$ is $c_{0}-$saturated. Observe also that
$\norm_{\widetilde{K}}\leq \norm_{X_{ius}}$ and hence  the
identity operator
$$
I:(c_{00},\norm_{X_{ius}})\to (c_{00},\norm_{\widetilde{K}})
$$
is bounded. Since the basis $(e_{n})_{n}$ of $X_{ius}$ is
boundedly complete, the space $X_{ius}$ does not contains $c_{0}$,
therefore the operator $I$ is also strictly singular. These
observations yield that every block subspace $Y$ of $X_{ius}$
contains a further block sequence $(y_{n})$ such that
$\norm[y_{n}]_{X_{ius}}=1$ and $\norm[y_{n}]_{\widetilde{K}}
\stackrel{n}{\longrightarrow}0$. Our intention is to show the
following:
\begin{proposition}\label{31}
Let $(x_{\ell})_{\ell}$ be a normalized block sequence in
$X_{ius}$ such that $\norm[x_{\ell}]_{\widetilde{K}}\to 0$. Then
there exists a subsequence $(x_{\ell})_{\ell\in M}$ of
$(x_{\ell})$ which is an unconditional basic sequence.
\end{proposition}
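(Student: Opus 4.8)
The plan is to show that, after passing to a suitable subsequence $(x_\ell)_{\ell\in M}$, the action of the odd (special) functionals in $K$ on linear combinations $\sum_{\ell\in M}a_\ell x_\ell$ becomes negligible, so that the norm is controlled up to a constant by the even functionals alone, and the even functionals are built from the unconditional $(\mathcal A_{n_{2j}},\frac1{m_{2j}})$-operations. First I would recall that since $\norm[x_\ell]_{\widetilde K}\to 0$, for any $j$ the value $|f(x_\ell)|$ is small whenever $f$ is a single averaging functional of weight $m_{2j}$ supported appropriately; by a diagonal/subsequence argument I can arrange rapid decay, i.e. choose $M=\{\ell_1<\ell_2<\cdots\}$ with $\norm[x_{\ell_k}]_{\widetilde K}\le\varepsilon_k$ where $(\varepsilon_k)$ decreases fast relative to the $(m_j),(n_j)$ growth, and also arrange that the supports are spread out so that each $x_{\ell_k}$ meets the range of the early terms of any special sequence only trivially.

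Next I would analyze an arbitrary $f\in K$ acting on $x=\sum_{k}a_k x_{\ell_k}$ by using the tree $\mathcal T_f$ from Definition~\ref{24} and estimating the contribution of each node. The key point is to separate the estimate into the even-type nodes (case (a), the $(\mathcal A_{n_{2j}},\frac1{m_{2j}})$-operations) and the special-functional nodes (case (b)). For the special nodes I want to exploit the two features emphasized in the Remark: the coefficients $\lambda_{f'_{2i}}$ pair $f_{2i-1}$ only with the specific block $x_{2i}$ hard-wired into the special sequence, and the norming estimate $\norm[w(f_{2i})x_{2i}]\le 1$ together with the coding condition \eqref{es1} forces the weights appearing along a special functional to be large and essentially distinct across the tree. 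Combined with the $\widetilde K$-smallness of the $x_{\ell_k}$, this should give a bound of the form $\bigl|\frac{1}{m_{2j+1}}E(\cdots)(x)\bigr|\le C\max_k|a_k|\cdot\theta$ with $\theta\to 0$, i.e. each special functional sees at most (essentially) one block with a non-negligible coefficient and contributes an arbitrarily small fraction otherwise.

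Putting these together I would prove the unconditionality estimate directly: for any signs $(\eta_\ell)_{\ell\in M}$ and scalars $(a_\ell)$, bound $\norm[\sum \eta_\ell a_\ell x_\ell]$ by splitting any evaluating $f\in K$ into its even part and its special part along the tree. The even part is sign-invariant up to the unconditional constant of the mixed-Tsirelson frame (since averaging operations are unconditional), while the special part is dominated by the negligible quantity $C\theta\max|a_\ell|\le C\theta\norm[\sum a_\ell x_\ell]$ from the previous step; summing over the finitely many nodes and using that the weights are large yields $\norm[\sum\eta_\ell a_\ell x_\ell]\le C'\norm[\sum a_\ell x_\ell]$ uniformly in the signs, which is exactly unconditionality of $(x_\ell)_{\ell\in M}$.

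The hard part will be the estimate on the special-functional nodes: I must show that along the whole tree $\mathcal T_f$ the cumulative effect of all case-(b) nodes acting on $x$ stays small, not just that a single special functional is small. This requires a careful bookkeeping argument controlling how many special functionals of each weight can overlap the range of the chosen blocks simultaneously, using the injectivity of the coding $\sigma$ together with \eqref{es2}–\eqref{es1} to guarantee that distinct special functionals in the tree have distinct (and large) weights, so that a geometric-series bound $\sum_j \frac{1}{m_{2j+1}}$-type sum absorbs them; the technical challenge is handling the coefficients $\lambda_{f'_{2i}}$ and the constraint $|g(x_{2i})|\le \frac{1}{m_{\sigma(\phi_{2i-1})}}$ to ensure the pairing terms $\lambda_{f'_{2i}}f_{2i-1}$ do not accumulate when the $x_{2i}$ are \emph{not} among our chosen blocks.
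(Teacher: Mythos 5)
Your strategy rests on the claim that, after passing to a suitable subsequence, the cumulative contribution of the case-(b) (special) nodes to $f\bigl(\sum_k a_k x_{\ell_k}\bigr)$ can be bounded by $C\theta\max_k|a_k|$ with $\theta$ arbitrarily small, so that the norm on $\langle x_{\ell_k}\rangle$ is essentially governed by the unconditional even operations. This claim is false, and no thinning or spreading of supports can repair it, because special sequences are built \emph{a posteriori} (via the coding $\sigma$) to match whatever blocks survive. Concretely: the even components $f_{2i}'$ of a special functional range over essentially arbitrary functionals of prescribed weight and support, and by Lemma~\ref{412} one can construct, inside the span of \emph{any} normalized block sequence --- in particular inside $\langle x_\ell\rangle_\ell$, however small the $\widetilde K$-norms --- a depended sequence whose even members $x_{2i}$ lie in $\langle x_\ell\rangle_\ell$ and satisfy $f_{2i}(m_{j_{2i}}x_{2i})\ge 1/12$. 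The associated special functional $f\in K_\phi$ then gives $f(y)\ge \frac{1}{24}$ for $y=\frac{m_{2j+1}}{n_{2j+1}}\sum_i m_{j_{2i}}x_{2i}$ (inequality \eqref{ll2} in the proof of Proposition~\ref{pd}), whereas, since $\norm[m_{j_{2i}}x_{2i}]\le 1$ (Remark~\ref{413}) and the basis is bimonotone, the coefficients of $y$ with respect to the blocks $x_\ell$ satisfy $\max_k|a_k|\le \frac{2m_{2j+1}}{n_{2j+1}}\to 0$. So a single special functional acts with value bounded below by an absolute constant on vectors of your subspace with arbitrarily small $\max_k|a_k|$, contradicting your key estimate. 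This non-negligibility is not a defect to be engineered away: it is exactly what makes $X_{ius}$ indecomposable, and note that $\norm[\cdot]_{\widetilde K}$-smallness gives no leverage on the components $f'_{2i}$, since $\widetilde K$ consists only of (averages of) basis vectors.

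What the hypothesis $\norm[x_\ell]_{\widetilde K}\to 0$ actually controls is the interaction of the blocks with the \emph{odd} components $f_{2i-1}$ of special functionals, which are weighted averages of basis vectors and hence, after restriction to intervals, act like elements of $\widetilde K$. Accordingly, the paper's proof does not kill the special part --- it transfers it. Setting $y_k=x_k|_{\cup_{\alpha\in\mathcal F_f}\supp(f_\alpha)}$ (the part of $x_k$ lying under first members of depended couples), Lemma~\ref{l1.5} gives $|f(y_k)|\le 2\sigma_k$, and Lemma~\ref{l1.6} constructs, by recursion on the tree of $f$, a functional $g\in K$ with the same tree structure such that $g(\varepsilon_k(x_k-y_k))=f(x_k-y_k)$ for every $k$: at case-(a) nodes one flips the signs of the children, and at case-(b) nodes one replaces $f'_{2i}$ by $\pm f'_{2i}$ (legitimate because membership in the class appearing in \eqref{ek0} depends only on weight and support and is symmetric) while re-adjusting the coefficients $\lambda$. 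Unconditionality with constant $4$ then follows by combining this exact transfer identity with the $2\sigma_k$-estimates on the discarded parts $y_k$. In short, the rigidity of the special functionals is exploited to reproduce them with prescribed signs, not to make them small; a correct proof along your lines would have to be restructured into such a sign-transfer argument.
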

The proof of this proposition requires certain steps and we
attempt a sketch of the main ideas. First we assume, passing to a
subsequence, that $\norm[x_\ell]_{\widetilde{K}}<\sigma_\ell$ with
 $\sum\sigma_\ell<\frac{1}{8}$ and we claim that
 $(x_\ell)_{\ell\in\mathbb{N}}$ is an unconditional basic sequence.
Indeed, consider a norm one combination
$\sum\limits_{\ell=1}^db_\ell x_\ell$ and let
$(\varepsilon_\ell)_{\ell=1}^d\in\{-1,1\}^d$. We shall show that
$\norm[\sum\limits_{\ell=1}^d\varepsilon_\ell b_\ell
x_\ell]>\frac{1}{4}$. Choose any $f\in K$ with
$f(\sum\limits_{\ell=1}^db_\ell x_\ell)>\frac{3}{4}$ and we are
seeking a $g\in K$ such that
$g(\sum\limits_{\ell=1}^d\varepsilon_\ell b_\ell
x_\ell)\ge\frac{1}{4}$. To find such a $g$ a normal procedure is
to consider a tree $(f_\alpha)_{\alpha\in\mathcal{A}}$ of the
functional $f$ and then inductively to produce a functional $g$
with a tree $(g_\alpha)_{\alpha\in\mathcal{A}}$
 such that
 \begin{equation}     \label{sk}
 |f(x_\ell)-g(\varepsilon_{\ell}x_{\ell})|<2\sigma_{\ell}
 \end{equation}
 which easily yields the desired result.

In most of the cases, the choice for producing $g_{\alpha}$ from
$f_{\alpha}$ is straightforward. Essentially there exists only one
case where we need to be careful. That is when $f_{\alpha}\in
K_{\phi}$ for some special sequence $\phi$. (i.e.
$f_{\alpha}=\frac{\pm 1}{m_{2j+1}}E(\lambda_{f_2^{\prime}}f_1
+f_2^{\prime}+\cdots+\lambda_{f^{\prime}_{n_{2j+1}-1}}f_{n_{2j+1}-1}
+f_{n_{2j+1}}))$ and for some $i\le n_{2j+1}/2$ and $\ell<d$ we
have
\[ \max\supp x_{\ell-1}< \min\supp (f_{2i-1}) \le\max\supp x_\ell\]
\[ \max\supp f_{2i}^{\prime} \ge \min\supp x_{\ell+1} .\]
In this case we produce $g_{\alpha}$ from $f_{\alpha}$ such that
$g_{\alpha}\in K_{\phi}$. The form of $f_{\alpha}$ and hence
$g_{\alpha}$ permits us to show that
$|f_{\alpha}(x_\ell)-g_{\alpha}(\varepsilon_{\ell}x_{\ell})|<2
\sigma_{\ell}$.

 We pass now to present the proof and we start with the next
notation and definitions.

\textit{Notation.} Let $f\in K$ and
$(f_{\alpha})_{\alpha\in\mathcal{A}}$ a tree of $f$. Then for
every non terminal node $\alpha\in\mathcal{A}$ we order
 the set $S_{\alpha}$ following the natural order of
 $\{\supp f_\beta\}_{\beta\in S_{\alpha}}$.
Hence for $\beta\in S_{\alpha}$ we denote by $\beta^+$ the
immediate successor of $\beta$ in the above order if such an
object exists.

\begin{definition}
Let $f\in K$ and $(f_{\alpha})_{\alpha\in\mc{A}}$ be a tree of
$f$. A couple of functionals $f_{\alpha}$, $f_{\alpha^{+}}$ is
said to be a {\bf depended couple with respect to}
 $\mathbf{f}$,
 (w.r.t. $f$), if there exists
 $\beta\in\mc{A}$ such that $\alpha,\alpha^{+}\in S_{\beta}$,
 $f_{\beta}=\frac{\varepsilon}{m_{2j+1}}
 E(\sum\limits_{i=1}^{n_{2j+1}/2}
 \lambda_{f_{2i}^{\beta}}f_{2i-1}^{\beta}+f_{2i}^{\beta})$,\,
  $f_{\alpha}=Ef_{2i-1}^{\beta}$ and
 $f_{\alpha^{+}}=Ef_{2i}^{\beta}$ for some $i\le n_{2j+1}/2$.
 \end{definition}
\begin{definition}
 Let $(x_{k})_{k}$ be a normalized block sequence, $f\in K$ and
 $\mathcal{T}_{f}=(f_{\alpha})_{\alpha\in\mc{A}}$ be a tree of $f$.
For $k\in \mathbb{N}$, a couple of functionals $f_{\alpha}$,
$f_{\alpha^{+}}$ is said to be {\bf depended couple with respect
to} $\mathbf{f}$ {\bf and }$\mathbf{x_{k}}$ (w.r.t.) if
$f_{\alpha}$, $f_{\alpha^{+}}$ is a depended couple w.r.t. $f$ and
moreover
$$
\max\supp x_{k-1}<   \min\supp f_{\alpha}  \le\max\supp x_k
$$
$$
\,\,\,\text{and}\,\,\,
 \max\supp f_{\alpha^{+}}\geq\min\supp x_{k+1}.
 $$

 We also set
\begin{equation}\label{s2e1}
\mathcal{F}_{f,x_{k}}= \{\alpha\in\mc{A}: f_{\alpha},
f_{\alpha^{+}}\,\,\,\text{is a depended couple w.r.t.}
\,\,f\,\text{and}\,\,x_{k}\}\,.
 \end{equation}
 and
 \begin{equation}\label{100}
 \mathcal{F}_{f}=\bigcup\limits_k \mathcal{F}_{f,x_{k}}\,.
 \end{equation}
 \end{definition}
 \begin{remark}\label{34}
 Let $(x_{k})$ be a  block sequence in $X_{ius}$, $f\in K$ and
 $(f_{\alpha})_{\alpha\in\mc{A}}$ be a tree of $f$.

 1. It is easy to see that
 for every $k\in\mathbb{N}$ and every non terminal node $\alpha\in\mc{A}$
the set $S_{\alpha}\cap \mathcal{F}_{f,x_{k}}$  has at most one element.

 2. As consequence of this, we obtain that for every $k$
  and $\alpha_{1},\alpha_{2}\in\mathcal{F}_{f,x_{k}}$ with
   $\alpha_{1}\not=\alpha_{2}$ we have that
 $\alpha_1,\alpha_2$ are incomparable and
  $\vert\alpha_{1}\vert\not=\vert\alpha_{2}\vert$,
  where we denote by $\vert\alpha\vert$ the order of $\alpha$ as
  a member of the finite tree $\mc{A}$.

 3. It is also easy to see that for
$\alpha_1,\alpha_2\in \mathcal{F}_f$ with $\alpha_{1}\neq
\alpha_2$, $\alpha_1,\alpha_2$ are incomparable and hence
$\text{range}
(f_{\alpha_{1}})\cap\text{range}(f_{\alpha_{2}})=\emptyset$.
\end{remark}
\begin{lemma}\label{l1.5}
Let $(x_{k})_{k}$ be a block sequence in $X_{ius}$ such that
$\norm[x_{k}]_{\widetilde{K}}\leq\sigma_{k}$, $f\in K$ and
$(f_{\alpha})_{\alpha\in\mc{A}}$ be a tree of $f$. We set $y_{k}=
x_{k}|_{\cup_{\alpha\in \mathcal{F}_f}\supp(f_{\alpha})}$. Then we
have that
\begin{equation}\label{101}
 \vert f(y_{k})\vert\leq 2\sigma_{k}\,\,.
 \end{equation}
\end{lemma}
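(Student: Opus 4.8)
The plan is to expand $f$ along its tree $(f_{\alpha})_{\alpha\in\mc{A}}$ and to localize the computation on the ranges of the functionals $\{f_{\alpha}:\alpha\in\mc{F}_{f}\}$, which are pairwise disjoint by Remark~\ref{34}(3). For each node $\alpha$ let $c_{\alpha}$ denote the product of the scalar coefficients attached to the edges of the branch from the root to $\alpha$; concretely these factors are $\tfrac1{m_{2j}}$ at a node of type (a) and $\tfrac{\varepsilon}{m_{2j+1}}$ or $\tfrac{\varepsilon\lambda_{f'_{2i}}}{m_{2j+1}}$ at the two kinds of immediate successor of a node of type (b). Since at every non-terminal node the immediate successors have pairwise disjoint, increasing ranges, descending the tree shows that the restriction of $f$ to $\range(f_{\alpha})$ coincides with $c_{\alpha}f_{\alpha}$. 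Putting $A_{k}=\{\alpha\in\mc{F}_{f}:\supp f_{\alpha}\cap\supp x_{k}\neq\emptyset\}$ and using the disjointness of the supports $\supp f_{\alpha}$, I would write $y_{k}=\sum_{\alpha\in A_{k}}x_{k}|_{\supp f_{\alpha}}$ and thereby obtain
\begin{equation*}
f(y_{k})=\sum_{\alpha\in A_{k}}c_{\alpha}\,f_{\alpha}(x_{k}).
\end{equation*}

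The next step is the key observation that every $f_{\alpha}$ with $\alpha\in\mc{F}_{f}$ lies in $\widetilde{K}$. Indeed, by definition of a depended couple such an $f_{\alpha}$ equals $Ef^{\beta}_{2i-1}$, the restriction to an interval $E$ of an odd-indexed functional of a special sequence; by \eqref{co1} and \eqref{c02} every such odd functional has the form $\tfrac1{m_{2j'}}\sum_{l}e^{*}_{l}$ with at most $n_{2j'}$ summands (recall that $\sigma$ takes even values), hence belongs to $\widetilde{K}$, and $\widetilde{K}$ is stable under interval restrictions. As $\widetilde{K}$ is symmetric, this yields $|f_{\alpha}(x_{k})|\le\norm[x_{k}]_{\widetilde{K}}\le\sigma_{k}$ for every $\alpha\in A_{k}$, so that $|f(y_{k})|\le\sigma_{k}\sum_{\alpha\in A_{k}}|c_{\alpha}|$.

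It remains to show $\sum_{\alpha\in A_{k}}|c_{\alpha}|\le2$, which I regard as the heart of the argument. Each edge coefficient has modulus at most $\tfrac12$: the factors $\tfrac1{m_{2j}},\tfrac1{m_{2j+1}}$ are $\le\tfrac12$ because $m_{j}\ge2$, and $|\lambda_{f'_{2i}}|\le1$ since, by the defining condition $|g(x_{2i})|\le m_{\sigma(\phi_{2i-1})}^{-1}$ in \eqref{ek0}, one has $|\lambda_{f'_{2i}}|=m_{\sigma(\phi_{2i-1})}\,|f'_{2i}(x_{2i})|\le1$ (and $|\lambda_{f'_{2i}}|=n_{2j+1}^{-2}\le1$ in the remaining case). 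Hence $|c_{\alpha}|\le2^{-|\alpha|}$. Next I would check that $A_{k}$ exceeds $\mc{F}_{f,x_{k}}$ by at most one node: no $\alpha\in\mc{F}_{f,x_{k'}}$ with $k'>k$ can meet $\supp x_{k}$, as then $\min\supp f_{\alpha}>\max\supp x_{k'-1}\ge\max\supp x_{k}$; while every $\alpha\in\mc{F}_{f,x_{k'}}$ with $k'<k$ meeting $\supp x_{k}$ satisfies $\min\supp f_{\alpha}<\min\supp x_{k}\le\max\supp f_{\alpha}$, so that $\min\supp x_{k}\in\range(f_{\alpha})$ and, the ranges being disjoint, there is at most one such node $\alpha_{0}$. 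Finally, by Remark~\ref{34}(2) the nodes of $\mc{F}_{f,x_{k}}$ sit at pairwise distinct levels $|\alpha|\ge1$, whence $\sum_{\alpha\in\mc{F}_{f,x_{k}}}|c_{\alpha}|\le\sum_{d\ge1}2^{-d}=1$; adding the single term $|c_{\alpha_{0}}|\le1$ gives $\sum_{\alpha\in A_{k}}|c_{\alpha}|\le2$ and hence $|f(y_{k})|\le2\sigma_{k}$. The main obstacle is precisely this last bookkeeping: correctly isolating which depended couples can act on the single block $x_{k}$ and then combining the geometric decay of $c_{\alpha}$ with the ``distinct levels'' property of Remark~\ref{34}.
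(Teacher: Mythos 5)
Your proof is correct and follows essentially the same route as the paper's: expand $f$ along its tree so that each $f_{\alpha}$, $\alpha\in\mathcal{F}_{f}$, enters with a coefficient of modulus at most $2^{-|\alpha|}$, bound $|f_{\alpha}(x_{k})|\le\sigma_{k}$ via membership of the (restricted) odd special-sequence functionals in $\widetilde{K}$, and use Remark~\ref{34} to limit how many nodes of $\mathcal{F}_{f}$ can act on $x_{k}$. The only differences are cosmetic: the paper counts at most two relevant nodes per level while you count the distinct-level nodes of $\mathcal{F}_{f,x_{k}}$ plus a single stray node coming from earlier blocks, and you make explicit the step $f_{\alpha}\in\widetilde{K}$ that the paper leaves implicit.
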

\begin{proof} Let us first observe that for each $q\in\mathbb{N}$ the set
 $\{\text{range} (f_{\alpha}):\; |\alpha|=q\}$ consists of pairwise disjoint sets.
 Therefore from the preceding remark we obtain that for each $k$ and each
 $q$ the set
 \[ \{\alpha\in \mathcal{F}_f:\; |\alpha|=q,\;\text{range}(f_{\alpha})\cap \text{range}(x_k)
 \neq \emptyset\}    \]
 contains at most two elements
 (one of them
 belongs to $\mathcal{F}_{f,x_{k}}$ and the
 other to $\mathcal{F}_{f,x_{\ell}}$ for some $\ell\leq
 k-1$). Therefore
\begin{eqnarray*} \vert f(y_{k})\vert & \leq &
\sum_{\alpha\in\mathcal{F}_{f}} \bigl(
\prod_{0\preceq\gamma\prec\alpha}\frac{1}{w(f_{\gamma})}
\bigr)\vert f_{\alpha}(x_{k})\vert
\\
& = &
 \sum\limits_i\sum\limits_{\alpha\in \mathcal{F}_f\,, |\alpha|=i}
\bigl(\prod_{0\preceq\gamma\prec\alpha}\frac{1}{w(f_{\gamma})}
 \bigr)\vert f_{\alpha}(x_{k})\vert\leq
 2\sigma_{k}\sum_{i}\frac{1}{m^{i}_{1}}\leq 2\sigma_{k}\,\,.
 \end{eqnarray*}
 \end{proof}
 The following lemma is the crucial step for the proof of the main
 result of this section.
 \begin{lemma}\label{l1.6}
 Let $(x_{k})_{k}$ be a block sequence in $X_{ius}$, $f\in K$ and
 $(f_{\alpha})_{\alpha\in\mc{A}}$ be a tree of $f$. For every
 $k\in\mathbb{N}$ we
  set $y_{k}=
 x_{k}|_{\cup_{\alpha\in\mathcal{F}_{f}}
 \supp(f_{\alpha})}$. Then for every choice  of signs $(\varepsilon_{k})_{k}$
 there exists a functional $g\in K$ with a tree
 $(g_{\alpha})_{\alpha\in\mc{A}}$ such that
\begin{enumerate}
\item $f(x_{k}-y_{k})=g(\varepsilon_{k}(x_{k}-y_{k}))$
\item For every $\alpha\in\mathcal{A}$, $\supp(f_{\alpha})= \supp(g_{\alpha})$
\item $\mathcal{F}_{f,x_{k}}=\mathcal{F}_{g,x_{k}}$
\end{enumerate}
for every $k=1,2,\ldots$.
\end{lemma}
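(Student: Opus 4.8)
The plan is to construct $g$ and its tree $(g_\alpha)_{\alpha\in\mathcal{A}}$ by downward induction on the tree $\mathcal{A}$, building $g_\alpha$ from $f_\alpha$ so that the three required properties hold at every node, with the sign flip $\varepsilon_k$ being absorbed appropriately. Since the tree $\mathcal{A}$ has the same index set for both functionals and conditions (2), (3) demand identical supports and identical depended-couple structure, the natural approach is to keep the entire combinatorial skeleton of the tree fixed and only adjust the coefficients (the $\lambda$'s and the signs) attached to each node so that the action on $\varepsilon_k x_k$ matches the action of $f$ on $x_k$.

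First I would set up the induction: at a terminal node $\alpha$ we have $f_\alpha=\pm e_n^*\in K_0$, and I would define $g_\alpha=\pm e_n^*$ with the sign chosen so that $g_\alpha(\varepsilon_k e_n)=f_\alpha(e_n)$ whenever $e_n$ lies in the range of some $x_k$; this is exactly where the factor $\varepsilon_k$ gets built in at the leaves. For a non-terminal node of type (a), where $f_\alpha=\frac{1}{m_{2j}}\sum_{i=1}^d f_{\beta_i}$, I would simply set $g_\alpha=\frac{1}{m_{2j}}\sum_{i=1}^d g_{\beta_i}$ using the already-constructed successors; membership in $K$ is guaranteed by property (iii) of the norming set since the supports (hence the ordering $g_{\beta_1}<\cdots<g_{\beta_d}$ and the cardinality bound $d\le n_{2j}$) are preserved by (2). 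The delicate case is type (b), where $f_\alpha\in K_\phi$ arises from a special sequence $\phi$; here I must produce $g_\alpha\in K_{\phi}$ using the \emph{same} special sequence $\phi$, so that the weights $w(g_{2i-1})=m_{\sigma(\phi_{2i-1})}$ and the depended-couple structure of condition (3) are automatically inherited.

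The main obstacle will be the type (b) step, specifically reconstituting the even functionals and their $\lambda$-coefficients so that $g_\alpha$ genuinely lies in $K_\phi$. For each depended couple $f_{2i-1}^\beta$, $f_{2i}^\beta$ I would need to choose $f_{2i}'$ with $\supp f_{2i}'=\supp f_{2i}$, $f_{2i}'\in K_{n-1}^{\sigma(\phi_{2i-1})}$, and $|g(x_{2i})|\le 1/m_{\sigma(\phi_{2i-1})}$, and then set $\lambda_{f_{2i}'}=f_{2i}'(m_{\sigma(\phi_{2i-1})}x_{2i})$ as dictated by the formula in (\ref{ek0}); the induction supplies the successors $g$ for the $f_{2i-1}$ part, and the sign flips must be arranged so that property (1) holds precisely on the portion $x_k-y_k$. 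The point of restricting attention to $x_k-y_k$ rather than $x_k$ is that the couples counted in $\mathcal{F}_f$—which by Lemma~\ref{l1.5} carry negligible mass—are exactly the ones where the rigid form of the special functional prevents an independent sign choice; removing $y_k$ eliminates precisely these troublesome interactions, so on $x_k-y_k$ each basis coordinate meets $g_\alpha$ through a \emph{single} depended couple or a non-depended node, and the leaf-level sign assignment propagates upward without conflict.

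The verification that the three listed properties hold will then be largely bookkeeping: property (2) is immediate from constructing $g_\alpha$ with the same support at each node; property (3) follows because $\mathcal{F}_{g,x_k}$ is determined by the support pattern of the tree together with the depended-couple combinatorics, both of which are preserved; and property (1) is checked coordinatewise, using that the only coordinates where $f_\alpha$ and $g_\alpha$ could disagree are those inside $\bigcup_{\alpha\in\mathcal{F}_f}\supp(f_\alpha)$, which is exactly the support of $y_k$ and therefore does not meet $x_k-y_k$. I expect the genuine effort to be confined to writing down the correct coefficient and sign at the special-functional nodes and confirming that the resulting $g_\alpha$ satisfies the defining constraints of $K_\phi$; once that is in place, the global functional $g=g_0\in K$ follows by the inductive definition of the norming set.
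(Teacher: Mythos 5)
Your overall architecture (keep the tree skeleton and the same special sequence $\phi$ at each type (b) node, rebuild only the even members and the $\lambda$-coefficients) is the right one, and it is the paper's; but the place where you inject the signs --- at the terminal nodes --- is exactly where the argument breaks, and this is a genuine gap, not bookkeeping. By Definition~\ref{21} and \eqref{ek0}, an element of $K_{\phi}$ has the form $\frac{\pm 1}{m_{2j+1}}E(\lambda_{f_{2}'}f_{1}+f_{2}'+\cdots+\lambda_{f_{n_{2j+1}}'}f_{n_{2j+1}-1}+f_{n_{2j+1}}')$ in which the odd members $f_{2i-1}$ of $\phi$ (averages $\frac{1}{m_{\sigma(\phi_{2i-2})}}\sum_{l}e^{*}_{2i-1,l}$ with all coefficients of one sign) appear \emph{verbatim}, up to a single global sign and an interval restriction; only the even members $f_{2i}'$ and the coefficients $\lambda_{f_{2i}'}$ are free. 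Hence if your induction flips even one leaf under some $f_{2i-1}$ by $\varepsilon_k=-1$ (and different couples inside the same special functional will in general receive different $\varepsilon_k$'s), the resulting node functional is not in $K_{\phi}$, and $g\notin K$. Note that this is a defect of the functional $g$ itself, so subtracting $y_k$ --- which only concerns what $g$ is evaluated against --- cannot repair it.

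Your stated defense, that passing to $x_k-y_k$ removes all troublesome interactions, also fails for a second reason: $y_k$ removes only the coordinates under couples in $\mathcal{F}_f$, i.e.\ couples whose odd member starts inside the window of $x_k$ and whose even member reaches into $x_{k+1}$. A depended couple wholly contained in the window of a single $x_k$ is \emph{not} in $\mathcal{F}_f$, its support survives in $x_k-y_k$, and for property (1) its contribution must acquire the factor $\varepsilon_k$ even though its odd member cannot be sign-flipped. The paper's proof is engineered around precisely this: signs are introduced not at the leaves but at the maximal nodes whose range meets at most one $x_k$ (the set $D$); at and below the odd member of \emph{any} depended couple no flip is made ($g_{\beta}=f_{\beta}$, Step 1(a)), which keeps $g\in K$; and at the special-functional node the sign is transferred into the even member ($g_{\beta^{+}}=\varepsilon_k f_{\beta^{+}}$) and into the coefficient, $\lambda_{g^{\alpha}_{2i}}=\varepsilon_k\lambda_{f^{\alpha}_{2i}}$ --- legitimate because when $g^{\alpha}_{2i}(z_{2i})=0$ the coefficient may be chosen freely in $\{\pm\frac{1}{n_{2j+1}^{2}}\}$, and when $g^{\alpha}_{2i}(z_{2i})\neq 0$ the forced value $g^{\alpha}_{2i}(m_{\sigma(\phi_{2i-1})}z_{2i})$ carries the sign automatically. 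The whole couple is thereby multiplied by $\varepsilon_k$ while remaining inside $K_{\phi}$, which is what property (1) requires; couples in $\mathcal{F}_f$, which straddle several $x_k$'s and so cannot carry any coherent sign, are exactly the ones whose mismatch is confined to $\supp(y_k)$ --- that is why the lemma asserts equality only on $x_k-y_k$. Without this transfer-of-sign mechanism your construction does not produce an element of $K$, so the proposal fails at its central step.
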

\begin{proof}
For the given tree $(f_{\alpha})_{\alpha\in\mc{A}}$ of $f$, we define
\begin{align*}
D=\{\beta\in\mc{A}:\, &
\text{range}(f_{\beta})\cap\text{range}(x_{k})\not= \emptyset\,\,\text{for
at most one $k$}
\\
&\text{and if}\,\,\beta\in S_{\alpha}\,\text{then
range}(f_{\alpha})\cap\text{range}(x_{i})\not=\emptyset\,\,\text{for at
least two}\,\,\,x_{i}\}\,.
\end{align*}
 Let us observe that for every branch $b$ of $\mc{A}$, $b\cap
 D$ is a singleton. Furthermore, for $\beta\in D$ and $\gamma\in\mathcal{A}$
 with $\beta\prec \gamma$ we have that $\gamma\not\in \mathcal{F}_f$.

 The definition of $(g_\alpha)_{\alpha\in\mathcal{A}}$ requires the following three
 steps. \\
 \textit{ Step 1.} First we define the set $\{g_\beta:\; \beta\in D\}$
 as follows.

  (a)\; If $\beta\in D$ and there exists $\alpha\in \mathcal{A}$ with
  $\alpha\preceq \beta$ and $f_{\alpha},f_{\alpha^+}$ is a depended couple w.r.t. $f$
  we set $g_\beta=f_\beta$.

  (b)\; If  $\beta\in D$  does not
  belong to the previous case  and there exists a (unique)
  $k$ such that $\range(f_\beta)\cap\range(x_k)\neq \emptyset$
  then we set  $g_\beta=\varepsilon_k f_\beta$.

  (c)\;  If $\beta\in D$  does not belong to case (a) and
  $\range(f_{\beta})\cap\range(x_k)=\emptyset$ for all $k$
  then we set $g_\beta=\varepsilon_k f_\beta$ where
  \[ k=\max\{l:\; \range(x_l)<\range (f_\beta)\}.  \]
  (We have assumed that $\min\range(x_1)\le \min\range (f)$.)

  Let us comment the case (a) in the above definition. First we observe that
  the unique $\alpha\in\mathcal{A}$
   witnessing that $\beta$ belongs to the case (a) satisfies the following:
  either $\alpha=\beta$ or $|\alpha|=|\beta|-1$.
    Moreover if this $\alpha$ does not belong to $\mathcal{F}_f$ then
  $\alpha=\beta$, $\alpha^{+}\in D$.
    In this case, if we assume that
     there exists a (unique) $k$ such that
  $\range(f_{\alpha})\cap\range(x_k)\neq \emptyset$
  then $g_{\alpha^{+}}$ is defined by cases (b) or (c) and
  $g_{\alpha^{+}}=\varepsilon_k f_{\alpha^{+}}$ for the specific $k$.
  All these are straightforward
  consequences of the corresponding definitions.\\
  \textit{ Step 2.}  We set
  \[ D^+=\{\gamma\in\mathcal{A}:\; \mbox{ there exists } \beta\in D
    \mbox{ with } \beta\prec \gamma  \}.   \]
  For $\gamma \in D^+$ we set $g_{\gamma}=\varepsilon_{\beta}f_{\gamma}$
  where $\beta$ is the unique element of $D$ with $\beta\prec\gamma$
  and $\varepsilon_\beta\in\{-1,1\}$ is such that
  $g_{\beta}=\varepsilon_{\beta}f_{\beta}$.

  Clearly for every $\beta\in D\cup D^+$,
  $(g_{\gamma})_{\beta\preceq\gamma}$ is a tree of the functional
  $g_{\beta}$.  Furthermore for
   $\alpha\in D\cup D^+$ the following properties
  hold:

 \begin{enumerate}
 \item[(1)] $\text{supp}(f_{\alpha})=\text{supp}(g_{\alpha})$
 \item[(2)] $w(f_{\alpha})=w(g_{\alpha})$
 \end{enumerate}
  \textit{ Step 3.}  We set
$$
D^{-}=\{\alpha\in\mathcal{A}:\; \mbox{ there exists } \beta\in D
    \mbox{ with } \alpha\prec \beta  \}.
$$
  Observe that $\mathcal{A}=D\cup D^+\cup D^-$ and using backward induction,
  for all $\alpha\in D^-$ we shall define $g_{\alpha}$ such that the above
  (1) and (2) hold and additionally the following two properties will be
  established.

 \begin{enumerate}
  \item[(3)] For $\alpha\in D^-$, $f_{\alpha}(x_{k}-y_{k})=
 g_{\alpha}(\varepsilon_{k}(x_{k}-y_{k}))$ for all $k$.
  \item[(4)] For $\alpha\in D^-$ and each $k$ we have that
    $\mathcal{F}_{f_{\alpha},x_{k}}= \mathcal{F}_{g_{\alpha},x_{k}}$.
  \end{enumerate}

 Observe that for every $\alpha\in D^-$  we
 have that $f_{\alpha}\not\in K_{0}$ and furthermore for every $\beta\in D$
 $\mathcal{F}_{f_\beta}=\emptyset$.

 We pass now to construct inductively $g_{\alpha}$, $\alpha\in D^-$ and to establish
 properties (1)--(4). Let assume that $\alpha\in D^-$ and for every
 $\beta\in S_{\alpha}$ either $\beta\in D$ or $g_\beta$ has been defined
 and properties (1)--(4) have been established. We consider the following
 three cases.

 \textit{Case 1.\;} $w(f_{\alpha})=m_{2j}$ and $\alpha\in \mathcal{F}_f$.\\ That means
 that $f_{\alpha}=\frac{1}{m_{2j}}\sum\limits_{\beta\in S_{\alpha}}f_{\beta}$ and
 each $f_\beta=e_{\ell}^*$ for some $\ell\in\mathbb{N}$.
 Then $S_{\alpha}\subset D$ and from Step 1(a) we conclude that
 $g_{\beta}=f_{\beta}$ for all $\beta\in S_{\alpha}$. We set
$$
g_{\alpha}=
\frac{1}{m_{2j}}\sum\limits_{\beta\in S_{\alpha}}g_{\beta}=
f_{\alpha}.
$$
 Furthermore for each $k$ we have that $\supp(g_{\alpha})\cap
\supp(x_k)\subset \supp(y_k)$. Hence
\[g_{\alpha}(\varepsilon_k(x_k-y_k))=f_{\alpha}(x_k-y_k)=0\]
and also
$\mathcal{F}_{g_{\alpha}}=\mathcal{F}_{f_{\alpha}}=\emptyset$.
Thus  properties (3) and (4) hold while (1) and (2) are obvious.

 Before passing to the next case let us notice that there is no
$\alpha\in D^-$ such that $f_{\alpha},f_{\alpha^{+}}$ is a
depended couple w.r.t. $f$ and $\alpha\not\in\mathcal{F}_{f}$.
(See the comments after Step 1.)

\textit{Case 2.\;} $w(f_{\alpha})=m_{2j}$ and $\alpha\not\in \mathcal{F}_f$.\\
 From the previous observation we obtain that $\alpha\neq \beta$ for each
 $\beta\in\mathcal{A}$ with $f_{\beta},f_{\beta^+}$
 depended couple w.r.t. $f$, and we set
$$
g_{\alpha}=\frac{1}{m_{2j}}\sum\limits_{\beta\in S_{\alpha}}g_\beta.
$$
 Our inductive assumptions yield properties (1) and (2).
 To establish property (3) let $k\in\mathbb{N}$ and $\beta\in D\cap S_{\alpha}$
be such that $\range(x_k)\cap \range(f_\beta)\neq \emptyset$.
 Then $g_\beta=\varepsilon_k f_{\beta}$ hence
 \[g_{\beta}(\varepsilon_{k}(x_{k}-y_{k}))
=\varepsilon_{k}g_{\beta}(x_{k}-y_{k})=f_{\beta}(x_{k}-y_{k}).\]
 If $\beta\in D^-\cap S_{\alpha}$
 by the inductive assumption for each $k$ we have
\[g_{\beta}(\varepsilon_{k}(x_{k}-y_{k}))=f_{\beta}(x_{k}-y_{k}).\]
 Therefore \[g_{\alpha}(\varepsilon_{k}(x_{k}-y_{k}))=f_{\alpha}(x_{k}-y_{k}).\]
 Finally, for each $k$
 \[ \mathcal{F}_{f_{\alpha},x_k}
 =\bigcup\limits_{\beta\in S_{\alpha}}\mathcal{F}_{f_{\beta},x_k}
 =\bigcup\limits_{\beta\in S_{\alpha}\cap D^-}\mathcal{F}_{f_{\beta},x_k}
 =\bigcup\limits_{\beta\in S_{\alpha}\cap D^-}\mathcal{F}_{g_{\beta},x_k}
 =\mathcal{F}_{g_{\alpha},x_k} \]
 which establishes property (4).

 \textit{Case 3.\;}
 $f_{\alpha}
= \frac{\varepsilon}{m_{2j+1}} E(\lambda_{f_{2}^{\alpha}}f_{1}^{\alpha}+
 f_{2}^{\alpha}+\ldots+
 \lambda_{f_{n_{2j+1}}^{\alpha}}f_{n_{2j+1}-1}^{\alpha}
 + f_{n_{2j+1}}^{\alpha})\in K_{\phi}$ where
$\{f_\beta:\;\beta\in S_{\alpha}\}=\{Ef^{\alpha}_i:
Ef^{\alpha}_{i}\not=0,\;1\leq i\leq n_{2j+1}\}$,
$\varepsilon\in\{-1,1\}$, $E$ is an interval and $\phi$ is a
special sequence of length $n_{2j+1}$.

Let $\phi=(z_{1},f_{1},\ldots, z_{n_{2j+1}},f_{n_{2j+1}})$.
Without loss of generality we assume that $E=\mathbb{N}$ and
$\varepsilon=1$. Let us observe that the definition of
$\{g_\beta:\beta\in D\}$ and the inductive assumptions yield
that for  $i\le n_{2j+1}/2$,
 \begin{enumerate}
 \item[(i)] $f_{2i-1}=f^{\alpha}_{2i-1}=g^{\alpha}_{2i-1}$.
 \item[(ii)] $w(f_{2i})=w(f^{\alpha}_{2i})=w(g^{\alpha}_{2i})$.
 \item[(iii)] $\supp(f_{2i})=\supp(f^{\alpha}_{2i})=\supp(g^{\alpha}_{2i})$.
 \end{enumerate}
 We define
$$
g_{\alpha}=\frac{1}{m_{2j+1}}\left(\lambda_{g^{\alpha}_2}f_1+
g^{\alpha}_2+ \lambda_{g^{\alpha}_4}f_3+g^{\alpha}_4+\cdots
+\lambda_{g^{\alpha}_{n_{2j+1}}}f_{n_{2j+1}-1}+
g^{\alpha}_{n_{2j+1}}\right)
$$
where $\{g_\beta:\;\beta\in S_{\alpha}\}=\{g^{\alpha}_i:\;1\le
i\le n_{2j+1}\}$ while  $\lambda_{g^{\alpha}_{2i}}$
are defined as follows:\\
(5)\: If $g^{\alpha}_{2i}(z_{2i})\neq 0$ then
$\lambda_{g^{\alpha}_{2i}}=
g^{\alpha}_{2i}(m_{\sigma(\phi_{2i-1})}z_{2i})$.\\
(6)\: If $g^{\alpha}_{2i}(z_{2i})= 0$ and $f^{\alpha}_{2i-1}=f_{\beta}$,
there are two cases
\begin{enumerate}
 \item[a)] If
$\beta\in \mathcal{F}_f$ or $\beta\not\in \mathcal{F}_f$ and
$\range(f_{\beta})\cap\range(x_k)=\emptyset$ for all $k$ we set
$\lambda_{g^{\alpha}_{2i}}=\frac{1}{n_{2j+1}^2}$.
 \item[b)] If $\beta\not\in \mathcal{F}_f$
and there exists (unique) $k$ such that
$\range(f_{\beta})\cap\range(x_k)\neq\emptyset$ then we set
$\lambda_{g^{\alpha}_{2i}}=\varepsilon_k
\lambda_{f^{\alpha}_{2i}}$.
 \end{enumerate}
Let us observe that in the case (6) b), as follows from the
comments after Step 1, $g_{\beta^+}=\varepsilon_k f_{\beta^+}$
hence $f_{\beta^+}(z_{2i})=0$ if and only if
$g_{\beta^+}(z_{2i})=0$.

 From the above definition of $\lambda_{g^{\alpha}_{2i}}$, $1\le
i\le n_{2j+1}/2$ and (i),(ii),(iii), we obtain that the functional
$g_{\alpha}$ belongs to $K_{\phi}\subset K$.

Properties (1) and (2) are obvious for $g_{\alpha}$ and we check
the rest. First we establish  property (4).

Let $k$ be given. From Remark~\ref{34} (1) it follows that there exists at most one depended
couple $f^{\alpha}_{2i-1}, f^{\alpha}_{2i}$ w.r.t. $f$ and $x_{k}$. Moreover if such a depended couple,
$f^{\alpha}_{2i-1}, f^{\alpha}_{2i}$, exists then for every
$i^{\prime}\neq i$ it holds that
$\mathcal{F}_{f^{\alpha}_{2i^{\prime}},x_k}=\emptyset$. Therefore in this case we have that
\begin{equation}\label{g1}
\mathcal{F}_{f_{\alpha},x_k}=\mathcal{F}_{f^{\alpha}_{2i},x_k}
\cup\{\beta\}
\end{equation}
where $f^{\alpha}_{2i-1}=f_\beta$. In the case that no such
depended couple exists, it follows that
$\mathcal{F}_{f_{2i}^{\alpha},x_{k}}\not=\emptyset$ for at most
one $i$. This is a consequence of the definitions and the fact that the
functionals $(f_{i}^{\alpha})_{i}$ are successive. If such an $i$
exists then
\begin{equation}\label{g2}
\mathcal{F}_{f_{\alpha},x_k}=\mathcal{F}_{f^{\alpha}_{2i},x_k}
\end{equation}
The last alternative is that $\mathcal{F}_{f_{\alpha},x_k}=\emptyset$. This
description of $\mathcal{F}_{f_{\alpha},x_k}$ and the inductive
assumptions easily yield property (4). Namely, either $\mathcal{F}_{g_{\alpha}, x_{k}}=\mathcal{F}_{g^{\alpha}_{2i}, x_{k}}\cup\{\beta\}$ if \eqref{g1} holds, 
$\mathcal{F}_{g_{\alpha}, x_{k}}=\mathcal{F}_{g^{\alpha}_{2i}, x_{k}}$ if \eqref{g2} holds, or
$\mathcal{F}_{g_{\alpha}, x_{k}}=\emptyset$.

 Finally we check property (3). Fix a number $k$ and $i\le
n_{2j+1}/2$. If $g^{\alpha}_{2i}=g_{\beta}$ and $\beta\in D^-$
the inductive assumption
 provides
 \begin{equation}\label{102}
g^{\alpha}_{2i}(\varepsilon_k(x_k-y_k))=f^{\alpha}_{2i}(x_k-y_k).
 \end{equation}
 If $\beta\in D$ and $\range (f^{\alpha}_{2i})\cap \range(x_k)\neq
\emptyset$ then $g^{\alpha}_{2i}=\varepsilon_k f^{\alpha}_{2i}$
which yields (\ref{102}). Also if $\range (f^{\alpha}_{2i})\cap
\range(x_k)= \emptyset$ equality (\ref{102})
 trivially holds.

 In the case $g^{\alpha}_{2i-1}=g_{\beta}$, $\beta\in S_{\alpha}$
 we distinguish two subcases.
 First assume that $\beta\in \mathcal{F}_{f}$. Then
 $\supp (g^{\alpha}_{2i-1})=\supp (f^{\alpha}_{2i-1})$ and
 $\supp (f^{\alpha}_{2i-1})\cap \supp(x_k-y_k)=\emptyset$ therefore
\[
g^{\alpha}_{2i-1}(\varepsilon_k (x_k-y_k))=0=
f^{\alpha}_{2i-1}(x_k-y_k).
\]
The second subcase  is $\beta\not\in \mathcal{F}_{f}$. As we have
explained in the comments after Step 1 that means that either
$\range (f_\beta)\cap \range(x_k)= \emptyset$, hence everything
trivially holds, or $\beta,{\beta}^{+}\in D$,
$g_{{\beta}^{+}}=\varepsilon_k f_{{\beta}^{+}}$ and
$\lambda_{g^{\alpha}_{2i}}=\varepsilon_k\lambda_{f^{\alpha}_{2i}}$.
From these observations we conclude that
\[
\lambda_{g^{\alpha}_{2i}}g^{\alpha}_{2i-1}(\varepsilon_k(x_k-y_k))=
     \lambda_{f^{\alpha}_{2i}}f^{\alpha}_{2i-1}(x_k-y_k).
\]
All these derive the desired equality, namely
 \[
g_{\alpha}(\varepsilon_k(x_k-y_k))=f_{\alpha}(x_k-y_k).
\]
The inductive construction and the entire proof of the lemma is
complete.
 \end{proof}
\begin{proof}[Proof of Proposition~\ref{31}]
Let $(\sigma_{\ell})_{\ell}$ be a decreasing sequence of positive
numbers such that $\sum_{\ell}\sigma_{\ell}\leq 1/8$. For each
$\ell\in\mathbb{N}$ we select $k_{\ell}$ such that
$\norm[x_{k_{\ell}}]_{\widetilde{K}}<\sigma_{\ell}$. For
simplicity we assume that the entire sequence $(x_{\ell})$
satisfies the above condition. Let
$\sum_{\ell=1}^{d}b_{\ell}x_{\ell}$ be a finite linear combination
which maximizes the norm of all vectors of the form
$\sum_{\ell=1}^{d}c_{\ell}x_{\ell}$ with $\vert
c_{\ell}\vert=\vert b_{\ell}\vert$. Assume furthermore that
$\norm[\sum_{\ell=1}^{d}b_{\ell}x_{\ell}]=1$ and let $f\in K$ with
$f(\sum_{\ell=1}^{d}b_{\ell}x_{\ell})\geq 3/4$.
  Choose $\{\varepsilon_{\ell}\}_{\ell=1}^{d}\in \{-1,1\}^{d}$
 and consider the vector
 $\sum_{\ell=1}^{d}\varepsilon_{\ell}b_{\ell}x_{\ell}$.
 Lemma~\ref{l1.6} yields that
 there exists $g\in K$ and that for each $\ell=1,\ldots,d$,
 there exists a vector $y_{\ell}$ such that
 \begin{equation}\label{e5}
g(\sum_{\ell=1}^{d}\varepsilon_{\ell}b_{\ell}(x_{\ell}-y_{\ell}))=
 f(\sum_{\ell=1}^{d}b_{\ell}(x_{\ell}-y_{\ell}))\,.
 \end{equation}
 Also Lemma~\ref{l1.5} and Lemma~\ref{l1.6}(2) and (3) yield that
 \begin{equation*}
 \vert g(y_{\ell})\vert\leq 2\sigma_{\ell}\quad \text{and}\quad
 \vert f(y_{\ell})\vert\leq 2\sigma_{\ell}\,\,\,\text{for
 all}\,\,\ell=1,\ldots,d.
 \end{equation*}
 Hence
 \begin{align*}
 \norm[\sum_{\ell=1}^{d}\varepsilon_{\ell}b_{\ell}x_{\ell}]& \geq \vert
  g(\sum_{\ell=1}^{d}\varepsilon_{\ell}b_{\ell}x_{\ell})\vert \geq \vert
g(\sum_{\ell=1}^{d}\varepsilon_{\ell}b_{\ell}(x_{\ell}-y_{\ell}))\vert-
 \sum_{\ell=1}^{d}\vert g(y_{\ell})\vert\\ &\geq \vert
 f(\sum_{\ell=1}^{d}b_{\ell}x_{\ell})\vert-
 \sum_{\ell=1}^{d}\vert g(y_{\ell})\vert- \sum_{\ell=1}^{d}\vert
 f(y_{\ell})\vert\geq 3/4-2/4=1/4\,.
 \end{align*}
 This completes the proof of the proposition.
\end{proof}
\section{The space $X_{ius}$ is indecomposable}
 In the last section we shall show that the space $X_{ius}$ is
 indecomposable. This will be a consequence of a stronger result
 concerning the structure of the space $\mathcal{B}(X_{ius})$ of the
 bounded linear operators acting on $X_{ius}$.
 The proof adapts techniques related to H.I. spaces as they were
 presented in \cite{AT}. Thus we will first consider the auxiliary space
 $X_u$ and we will estimate the norm of certain averages of its basis.
 Next we will use the basic inequality to reduce upper estimation on certain
 averages to the previous results. Finally
 we shall compute the norms of linear combinations related to special
 sequences.

\medskip

 {\bf The auxiliary spaces $X_u$, $X_{u,k}$}

 We begin with the definition of the
 space $X_{u}$ which will be used to provide us upper
 estimations for certain averages in the space $X_{ius}$.

The space $X_{u}$ is the mixed Tsirelson space
$T[(\mc{A}_{4n_{j}},\frac{1}{m_{j}})_{j=1}^{\infty}]$. The norming
set $W$ of $X_{u}$ is defined in a similar manner as the set $K$.
\par We set $W^{j}_{0}=\{\pm e_{n}^*: n\in\mathbb{N}\}\cup \{0\}$,
for $j\in\mathbb{N}$ , $W_{0}=\cup_{j}W^{j}_{0}$. In the general
inductive step we define
$$
W_{n}^{j}=W_{n-1}^{j}\cup \{\frac{1}{m_{j}}\sum_{i=1}^{d}f_{i}:
 d\leq 4n_{j}, f_{1}<\ldots<f_{d}\in W_{n-1}\}
$$
and $W_{n}=\cup_{j}W_{n}^{j}$. Finally let $W=\cup_{n}W_{n}$. The
space $X_{u}$ is the completion of $(c_{00},\norm[\cdot]_W)$ where
$$
\norm[x]_{W}=\sup\{\langle f,x\rangle:
f\in W\}\,.
$$
 It is clear that the norming set $K$ of
 the space $X_{ius}$ is a subset of the convex hull of $W$. Hence we
 have  that $\norm[x]_{K}\leq\norm[x]_{W}$
 for every $x\in c_{00}$.

 We also need the spaces
 $X_{u,k}=T[(\mc{A}_{4n_{j}},\frac{1}{m_{j}})_{j=1}^{k}]$. The norm
of such a space is denoted by $\norm[\cdot]_{u,k}$ and it is defined in
 a similar
 manner as the norm of $X_u$. Namely we define $W_n^j$, $n\in\mathbb{N}$,
 $1\le j\le k$ as above and $W_n^{(k)}=\bigcup\limits_{j=1}^kW_n^j$. The
 norming set is $W^{(k)}=\bigcup\limits_{n=0}^{\infty}W_n^{(k)}$.
 Spaces of this form have been studied in \cite{BD} and it has been shown
 that  such a space is either isomorphic to some $\ell_p$, $1<p<\infty$,
 or to $c_0$.

 Before stating the next lemma we introduce some notations.
 For each $k\in\mathbb{N}$ we set $q_k=\frac{1}{\log_{4n_k}m_k}$.
 and $p_k=\frac{1}{1-\log_{4n_k}m_k}$ its conjugate.

 \begin{lemma}\label{103}
For the sequences $(m_j)_j$, $(n_j)_j$ used in the definition of
$X_{ius}$ and $X_u$, $X_{u,k}$  the following hold:
 \begin{enumerate}
\item[(1)\;] The sequence $(q_j)_j$  strictly increases to infinity.
\item[(2)\;] For $x=\sum a_{\ell} e_{\ell}\in c_{00}$, $\norm[x]_{u,k}\le
\norm[x]_{p_k}$.
\item[(3)\;] $\norm[\frac{1}{n_{k+1}}\sum\limits_{i=1}^{n_{k+1}}e_i]_{p_k}
\leq\frac{1}{m_{k+1}^3}$.
 \end{enumerate}
 \end{lemma}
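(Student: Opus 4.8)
The plan is to handle the three assertions in order, since (2) uses (1) while (3) is a self-contained estimate. I first record the explicit consequences of the recursions that drive everything. From $m_1=2$, $m_{j+1}=m_j^5$ one has $m_j=2^{5^{j-1}}$, so $\ln m_j=5^{j-1}\ln 2$ and $\ln m_{j+1}=5\ln m_j$; from $2^{s_j}\ge m_{j+1}^3=m_j^{15}$ one gets $s_j\ge 15\cdot 5^{j-1}$, in particular $s_j\ge 15>5$. Writing $N_j=4n_j$, the recursion $n_{j+1}=(4n_j)^{s_j}$ reads $N_{j+1}=4N_j^{s_j}$, so $\ln N_{j+1}=\ln 4+s_j\ln N_j$, and by definition $q_j=\ln N_j/\ln m_j$ with $1/p_j+1/q_j=1$, hence $1/p_j-1=-1/q_j$. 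As a sanity check $q_1=\ln 16/\ln 2=4>1$, and once (1) is proved all $q_j>1$, so each $p_j$ lies in $(1,\infty)$ and is a legitimate exponent.

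For (1) I would clear denominators. The inequality $q_{j+1}>q_j$, i.e. $\frac{\ln 4+s_j\ln N_j}{5\ln m_j}>\frac{\ln N_j}{\ln m_j}$, is equivalent (multiplying by $\ln m_j>0$) to $\ln 4+(s_j-5)\ln N_j>0$, which is immediate from $s_j>5$. For divergence I keep only the leading term: $q_{j+1}\ge\frac{s_j\ln N_j}{5\ln m_j}\ge\frac{15\cdot 5^{j-1}}{5\cdot 5^{j-1}\ln 2}\ln N_j=\frac{3}{\ln 2}\ln N_j$, and $\ln N_j\to\infty$ because $(n_j)$ is strictly increasing.

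Assertion (2) is the technical core, and I would prove $\langle f,x\rangle\le\norm[x]_{p_k}$ for all $f\in W^{(k)}$ by induction on the layer $W_n^{(k)}$ to which $f$ belongs. For $f\in W_0^{(k)}=\{\pm e_\ell^*\}\cup\{0\}$ this is clear since $\norm[x]_{p_k}\ge\norm[x]_\infty$. In the inductive step the only nontrivial case is $f=\frac1{m_j}\sum_{i=1}^d f_i$ with $j\le k$, $d\le 4n_j$ and $f_1<\cdots<f_d\in W_{n-1}^{(k)}$. Putting $E_i=\range(f_i)$, these are successive disjoint intervals, so $\langle f_i,x\rangle=\langle f_i,E_ix\rangle\le\norm[E_ix]_{p_k}$ by the inductive hypothesis, and Hölder's inequality gives $\sum_{i=1}^d\norm[E_ix]_{p_k}\le d^{1/q_k}\bigl(\sum_{i=1}^d\norm[E_ix]_{p_k}^{p_k}\bigr)^{1/p_k}\le (4n_j)^{1/q_k}\norm[x]_{p_k}$, the last inequality using disjointness of the $E_i$ together with $d\le 4n_j$. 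Dividing by $m_j$ it remains to check $(4n_j)^{1/q_k}\le m_j$, which after taking logarithms is exactly $q_j\le q_k$; this holds for $j\le k$ by (1). The only delicate point is arranging the exponents so that the Hölder bound closes, and this is precisely where monotonicity of $(q_j)$ enters.

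Finally, for (3) the norm is computed explicitly: since the vector sums $n_{k+1}$ disjoint basis vectors, $\norm[\tfrac1{n_{k+1}}\sum_{i=1}^{n_{k+1}}e_i]_{p_k}=\tfrac1{n_{k+1}}n_{k+1}^{1/p_k}=n_{k+1}^{1/p_k-1}=n_{k+1}^{-1/q_k}$. The required bound $n_{k+1}^{-1/q_k}\le m_{k+1}^{-3}$ is equivalent to $\ln n_{k+1}\ge 3q_k\ln m_{k+1}$; using $\ln m_{k+1}=5\ln m_k$ and $q_k\ln m_k=\ln(4n_k)$, the right-hand side equals $15\ln(4n_k)$, so the requirement is $n_{k+1}\ge(4n_k)^{15}$. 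Since $n_{k+1}=(4n_k)^{s_k}$ with $s_k\ge 15$, this holds, finishing the lemma.
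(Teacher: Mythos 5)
Your proof is correct and follows essentially the same route as the paper's: induction over the layers $W_n^{(k)}$ with H\"older's inequality for (2), and direct manipulation of the recursions $m_{j+1}=m_j^5$, $n_{j+1}=(4n_j)^{s_j}$ for (1) and (3). The only cosmetic difference is that you apply H\"older with the conjugate pair $(p_k,q_k)$ and close the estimate by checking $(4n_j)^{1/q_k}\le m_j$ (i.e. $q_j\le q_k$), whereas the paper applies it with $(p_j,q_j)$ (so $d^{1/q_j}\le m_j$ is immediate from $m_j=(4n_j)^{1/q_j}$) and then invokes $p_k\le p_j$ to dominate the $\ell_{p_j}$ sum by the $\ell_{p_k}$ sum --- the same monotonicity of $(q_j)_j$ in either guise.
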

 \begin{proof} (1)\,\, Using that $m_{j+1}=m_j^5$ and $n_{j+1}=(4n_j)^{s_j}$
 and the fact that $s_j$ increases to infinity we have that
 \[ q_{j+1}=\frac{1}{\log_{4n_{j+1}}m_{j+1}} =
            \frac{1}{\log_{4(4n_j)^{s_j}}m_j^5}>
\frac{1}{\frac{5}{s_j}\log_{4n_j}m_j}=\frac{s_j}{5}q_j  \]
 hence $(q_j)_j$ strictly increases to infinity.

 (2)\,\, We inductively show that  for $f\in W_n^{(k)}$
  \[    |f(\sum a_{\ell} e_{\ell})|\le\norm[\sum a_{\ell} e_{\ell}]_{p_k}.\]
  For $n=0$ it is trivial. The general inductive step goes as follows:
  for $f\in W_{n+1}^{(k)}$
$$
f(\sum a_{\ell} e_{\ell})=\frac{1}{m_j}\sum\limits_{i=1}^df_i
(\sum a_{\ell} e_{\ell})
$$
where $f_1<f_2<\cdots< f_d$, $d\le 4n_j$ for some $j\le k$. We set
$E_i=\range(f_i)$ and from our inductive assumption
  and H\"{o}lder inequality we obtain that
  \begin{equation*}
 |f(\sum a_{\ell} e_{\ell})|\le
 \frac{1}{m_j}\sum\limits_{i=1}^d
 \norm[\sum\limits_{\ell\in E_i}a_{\ell}e_{\ell}]_{p_k}
\le\frac{d^{\frac{1}{q_j}}}{m_j} \big(\sum\limits_{i=1}^d
\norm[\sum\limits_{\ell\in E_i}a_{\ell}e_{\ell}
]_{p_k}^{p_j}\big)^{\frac{1}{p_j}}\,.
 \end{equation*}
 Using that $p_k\le p_j$ and $m_j=(4n_j)^{\frac{1}{q_j}}$ we obtain
 inequality (2).

 (3)
\[\norm[\frac{1}{n_{k+1}}\sum\limits_{i=1}^{n_{k+1}}e_i]_{p_k}
 \le \frac{1}{n_{k+1}^\frac{1}{q_k}}=\frac{1}{(4n_k)^{\frac{s_k}{q_k}}}
 =\frac{1}{m_k^{s_k}}\le \frac{1}{m_{k+1}^3}  .\]
  (Recall that $2^{s_k}\ge m_{k+1}^3$).
 \end{proof}

The tree $\mathcal{T}_{f}$ of $f\in W$ is defined in a similar
manner as for $f\in K$.
\begin{lemma} \label{42}
Let $f\in W$ and $j\in\mathbb{N}$. Then
\begin{equation}\label{basise1}
\vert f(\frac{1}{n_{j}}\sum_{i=1}^{n_{j}}e_{k_{i}})\vert
\leq
\begin{cases}
\frac{2}{w(f)\cdot m_{j}},\quad &\text{if}\,\,\,w(f)<m_{j}\\
\frac{1}{w(f)},\quad &\text{if}\,\,\,w(f)\geq m_{j}\,.
\end{cases}
\end{equation}
If moreover we  assume that there exists a tree
$(f_{\alpha})_{\alpha\in\mc{A}}$ of $f$, such that $w(f_{\alpha})\not= m_{j}$
for every $\alpha\in\mc{A}$,  we have that
\begin{equation}\label{basise2}
 \vert
f(\frac{1}{n_{j}}\sum_{i=1}^{n_{j}}e_{k_{i}})\vert
\leq\frac{2}{m_{j}^{3}}\,.
\end{equation}
In particular the above upper estimations holds for every $f\in
K$.
\end{lemma}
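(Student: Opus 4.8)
The plan is to fix $x=\frac{1}{n_{j}}\sum_{i=1}^{n_{j}}e_{k_{i}}$, work with a tree $(f_{\alpha})_{\alpha\in\mc{A}}$ of $f\in W$, and split according to how $w(f)$ compares with the critical index $m_{j}$. Two elementary facts drive everything. First, if $g\in W$ has $w(g)=m_{r}$ then $g=\frac{1}{m_{r}}\sum_{s}g_{s}$ gives $\norm[g]_{\infty}\le 1/w(g)$, whence, writing $a_{g}=\#\{i:k_{i}\in\range(g)\}$, one gets the pointwise count $|g(x)|\le\frac{a_{g}}{n_{j}\,w(g)}$. Second, since the pieces of a single operation have successive ranges with $\sum a_{g_{s}}\le n_{j}$, the case $w(f)=m_{i}\ge m_{j}$ is immediate: from $\sum_{s}|g_{s}(x)|\le\sum_{s}\frac{a_{g_{s}}}{n_{j}}\le 1$ we read off $|f(x)|\le\frac{1}{m_{i}}=\frac{1}{w(f)}$, the second branch of \eqref{basise1}.

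For the decay estimates I would reduce to the auxiliary space $X_{u,j-1}$ via a \emph{cut at the critical weight}. Let $B$ be the antichain of minimal nodes $\beta$ with $w(f_{\beta})\ge m_{j}$ (for \eqref{basise2}, where no weight equals $m_{j}$, these satisfy $w(f_{\beta})\ge m_{j+1}$). Above $B$ all weights are $<m_{j}$, so routing each $e_{k_{i}}$ down the tree splits $f(x)=A+C$: the part $A$ coming from branches that terminate above the cut equals $h(x_{U})$ for a functional $h$ whose analysis uses only operations of weight $<m_{j}$, hence $h\in W^{(j-1)}$, and $x_{U}$ is a restriction of $x$; by Lemma~\ref{103}(2),(3), $|A|\le\norm[x_{U}]_{p_{j-1}}\le\norm[x]_{p_{j-1}}\le\frac{1}{m_{j}^{3}}$. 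The part $C=\sum_{\beta\in B}\theta_{\beta}f_{\beta}(x)$, with tree coefficients $|\theta_{\beta}|\le 1$, is controlled by the count above: since the $\range(f_{\beta})$ are disjoint with $\sum_{\beta}a_{\beta}\le n_{j}$ and $w(f_{\beta})\ge m_{j}$, we get $|C|\le\sum_{\beta}\frac{a_{\beta}}{n_{j}\,w(f_{\beta})}\le\frac{1}{m_{j}}$. For \eqref{basise2} the sharper $w(f_{\beta})\ge m_{j+1}=m_{j}^{5}$ yields $|C|\le\frac{1}{m_{j+1}}\le\frac{1}{m_{j}^{3}}$, so $|f(x)|\le|A|+|C|\le\frac{2}{m_{j}^{3}}$, which is \eqref{basise2}.

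The remaining case of \eqref{basise1}, $w(f)=m_{i}<m_{j}$, needs the extra factor $1/m_{i}$, so here I would first strip the leading operation: $|f(x)|\le\frac{1}{m_{i}}\sum_{l=1}^{d}|f_{l}(x)|$ with $d\le 4n_{i}$, and prove the sub-estimate $\sum_{l}|f_{l}(x)|\le\frac{2}{m_{j}}$. Applying the same cut to $\sum_{l}\varepsilon_{l}f_{l}$ (signs chosen to realize the sum of absolute values) separates a low part $A'$ and a high part $C'$. The bound $|C'|\le\frac{1}{m_{j}}$ is exactly as before. For $A'=\sum_{l}f_{l}^{\mathrm{low}}(x|_{E_{l}})$, now a sum over $\le d$ disjoint pieces rather than a single $W^{(j-1)}$-functional, I would apply Lemma~\ref{103}(2) to each term and then H\"older exactly as in the proof of Lemma~\ref{103}(2): $|A'|\le d^{1/q_{j-1}}\norm[x]_{p_{j-1}}\le(4n_{j-1})^{1/q_{j-1}}\norm[x]_{p_{j-1}}=m_{j-1}\norm[x]_{p_{j-1}}\le\frac{m_{j-1}}{m_{j}^{3}}\le\frac{1}{m_{j}}$, using $m_{j-1}=m_{j}^{1/5}$. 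Hence $\sum_{l}|f_{l}(x)|\le\frac{2}{m_{j}}$ and $|f(x)|\le\frac{2}{m_{i}m_{j}}=\frac{2}{w(f)m_{j}}$. The final sentence, that the estimates hold for every $f\in K$, follows because $K\subseteq\mathrm{conv}\,W$ and each generator inherits the weight it is assigned in $K$, so all three bounds pass to $K$.

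The step I expect to be the main obstacle is the treatment of the cut sub-functionals $f_{\beta}$: one must verify that regarding each maximal weight-$\ge m_{j}$ subtree as an atom genuinely produces legitimate objects of $X_{u,j-1}$ (so that Lemma~\ref{103} is applicable), and that the interplay of the pointwise bound $\norm[f_{\beta}]_{\infty}\le 1/w(f_{\beta})$ with the mass constraint $\sum_{\beta}a_{\beta}\le n_{j}$ really delivers $\frac{1}{m_{j}}$ (resp. $\frac{1}{m_{j+1}}$). The subtler bookkeeping is keeping the leading factor $1/w(f)$ in \eqref{basise1}, which is why one strips the top operation and pays a controlled H\"older factor $(4n_{i})^{1/q_{j-1}}\le m_{j-1}$ on the low part; the point is that the rapid growth $m_{j+1}=m_{j}^{5}$, $n_{j+1}=(4n_{j})^{s_{j}}$ makes this factor harmless against the $\frac{1}{m_{j}^{3}}$ decay furnished by Lemma~\ref{103}(3).
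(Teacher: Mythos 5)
Your proof is correct, and its engine is the same as the paper's: cut the tree of $f$ at the critical weight $m_j$, bound the contribution passing through nodes of weight $\geq m_j$ by counting coordinates against $\Vert f_\beta\Vert_\infty\le 1/w(f_\beta)$, and identify the remaining part as a single functional of $W^{(j-1)}$ so that Lemma~\ref{103}(2),(3) yields the $1/m_j^{3}$ decay --- this is precisely the paper's split into the coordinate sets $B$ and $B^{c}$ giving \eqref{eb1} and \eqref{eb2}, and your treatment of \eqref{basise2} (cut at weight $\geq m_{j+1}$, high part $\le 1/m_{j+1}$, low part $\le 1/m_j^3$) coincides with the paper's. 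The one place you deviate is the case $w(f)=m_i<m_j$ of \eqref{basise1}: to keep the factor $1/w(f)$ you strip the top operation and then pay the H\"older factor $d^{1/q_{j-1}}\le m_{j-1}$ on the low part, absorbed by $m_j=m_{j-1}^{5}$. This works, but the paper gets the bound with no extra step: since the root has weight $w(f)<m_j$, every branch ending at a coordinate of $B$ passes through both the root and a node of weight $\geq m_j$, so the product of inverse weights along it is at most $\frac{1}{w(f)m_j}$, which gives $\vert f(\frac{1}{n_j}\sum_{i\in B}e_{k_i})\vert\le\frac{1}{w(f)m_j}$ directly; and on $B^{c}$ the restriction of $f$, top operation included, is itself one element of $W^{(j-1)}$, so Lemma~\ref{103} gives $\frac{1}{m_j^{3}}$, which is $\le\frac{1}{w(f)m_j}$ because $w(f)m_j<m_j^{3}$. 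So your stripping-plus-H\"older detour is sound but unnecessary. Finally, your one-line justification of the passage to $f\in K$ (convexity together with preservation of the weight, and, for \eqref{basise2}, of the avoidance of weight $m_j$ in the representing trees) is at the same level of detail as the paper's, which likewise leaves this point implicit.
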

\begin{proof}
 If $w(f)\geq m_{j}$ the estimation is an immediate consequence
 of the fact that
 $\norm[f]_{\infty}\leq 1/w(f)$. Let $w(f)<m_{j}$ and
 $(f_{\alpha})_{\alpha\in\mc{A}}$ be a tree of $f$. We
 set $$ B=\{i:\text{there exists}\,\,\alpha\in\mc{A}\,\,\text{with}\,\,
k_{i}\in\text{supp}f_{\alpha}\,\,\text{and}\,\,w(f_{\alpha})\geq m_{j}\} $$
 Then we have that
 \begin{equation}\label{eb1}
 \vert f(\frac{1}{n_{j}}\sum_{i\in B}e_{k_{i}})\vert\leq
 \frac{1}{w(f)m_{j}}\,\,\,.
 \end{equation}
 To estimate $|f(\frac{1}{n_{j}}\sum_{i\in B^{c}}e_{k_{i}})|$,
  we observe that $f|_{\{k_i:\;i\in B^c\}}\in W^{(j-1)}$
  (the norming set of $X_{u,j-1}$)
   hence   Lemma \ref{103} yields that
  \begin{equation}\label{eb2}
 \vert f(\frac{1}{n_{j}}\sum_{i\in B^{c}}e_{k_{i}})\vert
 \leq \frac{1}{m_{j}^{3}}\,.
 \end{equation}

 Combining \eqref{eb1} and \eqref{eb2} we obtain \eqref{basise1}.

 To see   (\ref{basise2}) we define the set
 \[ B=\{i:\text{there exists}\,\,\alpha\in\mc{A}\,\,\text{with}\,\,
k_{i}\in\text{supp}f_{\alpha}\,\,\text{and}\,\,w(f_{\alpha})\geq m_{j+1}\} \]
 and we conclude that
 \begin{equation}\label{104}
 | f(\frac{1}{n_j}\sum\limits_{i\in B}e_{k_i})|\le \frac{1}{m_{j+1}}
 <\frac{1}{m_j^3}\,\,.
 \end{equation}
 Furthermore from our assumption $w(f_{\alpha})\neq m_j$ for every $\alpha\in\mathcal{A}$
 we conclude that $f|_{\{k_i:\;i\in B^c\}}\in W^{(j-1)}$.
  This yields that
 the corresponding of (\ref{eb2}) remains valid and combining
 (\ref{eb2}) and (\ref{104}) we obtain (\ref{basise2}).
 \end{proof}

{\bf The basic inequality and its consequences}

Next we state and prove the basic inequality
 which is an adaptation of the
corresponding result from \cite{AT}. Actually the proof of the
present statement is easier than the original one, due mainly to
the low complexity of the family $\mathcal{A}_n$ (in \cite{AT} are
studied spaces defined with  use of the Schreier families
$(\mathcal{S}_{\xi})_{\xi<\omega_1}$) and also since the
definition of the norming set $K$ does not involve convex
combinations. The role of this result is important since it
includes most of the necessary computations (unconditional or
conditional).

Recall that $K$ and $W$ denote the norming sets of
$X_{ius}$ and $X_u$ respectively.

\begin{proposition}\label{bin}(Basic inequality)
Let $(x_{k})$ be a block sequence in $X_{ius}$, $(j_{k})$ be a strictly
increasing sequence of positive integers, $(b_{k})\in c_{00}$,
$C\geq 1$ and $\varepsilon>0$ such that

$a)$ $\norm[x_{k}]\leq C$ for every $k$.

$b)$ For every $k\geq
1$,\,\,$\#(\text{supp}x_{k})\frac{1}{m_{j_{k+1}}} \leq\varepsilon$.

$c)$ For every $k\geq 1$, for all $f\in K$ with $w(f)<m_{j_{k}}$,
we have that $\vert f(x_{k})\vert\leq\frac{C}{w(f)}$.

 Then for every $f\in K$ there exists $g_{1}$ such that
 $g_{1}=h_{1}$ or $g_{1}=e^{*}_{t}+h_{1}$
 where $t\not\in\text{supp}h_{1}$, $h_{1}\in W$, $w(h_1)=w(f)$,
 and $g_{2}\in c_{00}$ with $\Vert
 g_{2}\Vert_{\infty}\leq \varepsilon$ such that
 \begin{equation}
 \vert f(\sum b_{k}x_{k})\vert \leq C(g_{1}+g_{2})(\sum\vert
 b_{k}\vert e_{k})\,\,,
 \end{equation}
 and $\text{supp}g_{1}$, $\text{supp}g_{2}$ are contained in
 $\{k:\text{supp}(f)\cap\text{range}(x_{k})\not=\emptyset\}$.

 $d)$ If we additionally  assume that for some
 $j_{0}\in\mathbb{N}$ we have that
 \begin{equation}
 \vert f(\sum\limits_{k\in E} b_{k}x_{k})\vert \leq C(\max_{k\in E}\vert
 b_{k}\vert+\varepsilon\sum_{k\in E}\vert b_{k}\vert)\,\,,
 \end{equation}
 for every interval $E$ of positive integers
 and every $f\in K$ with $w(f)=m_{j_{0}}$, then $h_{1}$ may be
  selected  to have a tree $(h_{\alpha})_{\alpha\in\mc{A}_{1}}$
   such that $w(h_{\alpha})\not=m_{j_{0}}$ for every $\alpha\in\mc{A}_{1}$.
 \end{proposition}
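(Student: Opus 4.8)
The plan is to prove the basic inequality by induction on the tree structure of the functional $f\in K$, mirroring the structure of Definition~\ref{24}. Given $f$ with a tree $(f_\alpha)_{\alpha\in\mc{A}}$, I would process each node and build a corresponding functional $g_1$ (essentially in $W$) whose action on $\sum|b_k|e_k$ dominates $|f(\sum b_k x_k)|/C$. The fundamental idea is that the vectors $x_k$ get replaced by basis vectors $e_k$: when a functional $f_\alpha$ of weight $w(f_\alpha)<m_{j_k}$ acts on $x_k$, hypothesis $c)$ gives the bound $C/w(f_\alpha)$, which is exactly what a weight-$w(f_\alpha)$ functional in $W$ contributes when acting on $e_k$. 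When $w(f_\alpha)\ge m_{j_k}$, the estimate is instead controlled by hypothesis $b)$ through the $\ell_\infty$-bound $\|g_2\|_\infty\le\varepsilon$, collecting the ``large weight'' contributions into the error term $g_2$.

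\textbf{Main structural steps.} First I would set up the inductive claim precisely: for each node $\alpha$, letting $F_\alpha=\{k:\supp(f_\alpha)\cap\range(x_k)\neq\emptyset\}$, I produce $g_1^\alpha$ and $g_2^\alpha$ supported in $F_\alpha$ with $\|g_2^\alpha\|_\infty\le\varepsilon$ and $w(g_1^\alpha)=w(f_\alpha)$, satisfying the inequality restricted to the indices in $F_\alpha$. For a terminal node $f_\alpha=\pm e_n^*$, the construction is immediate. For an $(\mc{A}_{n_{2j}},\frac{1}{m_{2j}})$-node, where $f_\alpha=\frac{1}{m_{2j}}\sum f_{\beta_i}$, I would combine the children's functionals with the same operation in $W$ (which is admissible since $d\le n_{2j}\le 4n_{2j}$), being careful at the boundary indices $k$ where several children interact with a single $x_k$; this is where the ``$e_t^*+h_1$'' shape and the doubling in the error come in. The essential point for separating $g_1$ from $g_2$ is a threshold argument: for each $x_k$ one identifies whether the acting subfunctional has weight below or above $m_{j_k}$, routing the contribution to $g_1$ via hypothesis $c)$ or to $g_2$ via hypothesis $b)$ respectively.

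\textbf{The special-functional case and the hard part.} The main obstacle is the node of type (b) in Definition~\ref{24}, where $f_\alpha\in K_\phi$ involves the special sequence and the coefficients $\lambda_{f'_{2i}}$. Here the averages $x_{2i+1}=\frac{1}{n_{\sigma(\phi_{2i})}}\sum e_{2i+1,l}$ built into the special sequence interact with the block vectors $x_k$, and one must use Lemma~\ref{42} to control how a functional with all weights away from $m_{j_0}$ acts on such averages, yielding the sharpened $\frac{2}{m_j^3}$ estimate. The delicate bookkeeping is to ensure that the weights $w(f_{2i})=m_{\sigma(\phi_{2i-1})}$ arising from the coding are all distinct and increasing (by \eqref{es2}), so that at most a bounded number of them exceed any given $m_{j_k}$; this keeps $\|g_2\|_\infty\le\varepsilon$ after summing the geometric contributions. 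For part $d)$, the additional hypothesis at weight $m_{j_0}$ lets me treat any node of weight exactly $m_{j_0}$ by the max-plus-$\varepsilon$ bound directly, so that such a node need not be reproduced as a weight-$m_{j_0}$ functional in the tree of $h_1$; instead it is absorbed into the error term $g_2$ together with a single coordinate, and the children are processed so that the resulting tree $(h_\alpha)_{\alpha\in\mc{A}_1}$ avoids the weight $m_{j_0}$ entirely. I expect the most technical estimate to be verifying that the coefficients $\lambda_{f'_{2i}}=f'_{2i}(m_{\sigma(\phi_{2i-1})}x_{2i})$ stay bounded so that the reconstructed $g_1$ genuinely lies in $W$ with the correct weight, and that the boundary interactions across the finitely many $x_k$ overlapping the range of $f_\alpha$ contribute only to $g_2$.
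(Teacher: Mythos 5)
Your overall skeleton is the paper's: an induction over a tree $(f_\alpha)_{\alpha\in\mc{A}}$ of $f$ in which each $x_k$ is replaced by $e_k$, contributions are routed into $g_1$ via hypothesis c) and into the error $g_2$ via hypothesis b), and in part d) any node of weight $m_{j_0}$ is collapsed to a single coordinate $e^*_{k_\alpha}$ plus an $\varepsilon$-sum, so that the tree of $h_1$ avoids the weight $m_{j_0}$ (the paper organizes this through the sets $A_k$ and $D_\alpha$ recording the unique node at which each $x_k$ ``splits''). However, you misidentify where the work lies. The special-functional nodes $f_\alpha\in K_\phi$ are not an obstacle at all: since $|\lambda_{f'_{2i}}|\le 1$ by the definition of $K_\phi$, the paper simply discards these coefficients in all pointwise estimates and treats a type (b) node exactly like a type (a) node of weight $m_{2j+1}$. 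Your plan to invoke Lemma~\ref{42}, the internal averages $x_{2i+1}$ of the special sequence, and the injectivity/growth of $\sigma$ inside this proof is misplaced: none of these enters the basic inequality. Those ingredients are used \emph{downstream}, in Lemmas~\ref{depest} and~\ref{ld}, to verify hypothesis d) for concrete sequences, and Lemma~\ref{42} is applied to the output $g_1\in W$ of the basic inequality (as in Proposition~\ref{ris}), not inside its proof. Likewise $\norm[g_2]_{\infty}\le\varepsilon$ requires no geometric summation over distinct coding weights: $g_2$ is assembled as $\varepsilon$ times a sum of $e_k^*$'s with pairwise disjoint supports, and disjointness is exactly what the $D_\alpha$ bookkeeping guarantees.

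The genuine gap is the verification that the assembled functional $h_1$ lies in $W$ with $w(h_1)=w(f)$; your justification (``admissible since $d\le n_{2j}\le 4n_{2j}$'') misses the actual counting argument, which is the technical core. At a node $\alpha$ of weight $m_j$, each child $\beta$ contributes $g^1_\beta$ which may have the form $e^*_{k_\beta}+h_\beta$ with $k_\beta$ interior to the range of $h_\beta$; to exhibit
$h_\alpha=\frac{1}{m_j}\bigl(\sum_{i\ge 2}e^*_{k_i}+\sum_{\beta\in S_\alpha}g^1_\beta\bigr)$
as an admissible $(\mc{A}_{4n_j},\frac{1}{m_j})$ combination one must split each such $h_\beta$ into three successive functionals $h^1_\beta<e^*_{k_\beta}<h^2_\beta$, and adjoin up to $n_j$ further singletons $e^*_{k_i}$ coming from the split indices; the total is at most $3n_j+n_j=4n_j$, which is precisely why the auxiliary space $X_u$ is built with the operations $(\mc{A}_{4n_j},\frac{1}{m_j})$ rather than $(\mc{A}_{n_j},\frac{1}{m_j})$. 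Without this, the conclusion $h_1\in W$ is unproved. Two further imprecisions feed into this. First, your inductive hypothesis assigns to $\alpha$ all $k$ with $\supp(f_\alpha)\cap\range(x_k)\neq\emptyset$; with that choice the children's functionals would share boundary coordinates and could not be made successive — one must assign each $k$ only to the unique splitting node (the paper's $A_k$). Second, your threshold is off: hypothesis b) yields the $\varepsilon$-bound only when $w(f_\alpha)\ge m_{j_{k+1}}$, and c) applies only when $w(f_\alpha)<m_{j_k}$; in the remaining window $m_{j_k}\le w(f_\alpha)<m_{j_{k+1}}$ (at most one index per node, after ordering) neither hypothesis applies, and that single index is exactly where the term $e^*_t$ is spent, paying the full price $\norm[x_k]\le C$.
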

 Our intention is to apply the above inequality in order to obtain
upper estimations for $\ell_{1}-$averages of rapidly increasing sequences.
Observe that the above proposition reduces this problem
to the estimations of the functionals $g_1,g_2$ on a corresponding average of the basis in the space $X_{u}$.

 The proof in the general case, assuming only $a),b), c)$,
and in the special case,
where additionally $d)$ is assumed, is the same. We will make the proof only in the special case. The proof in the general case arises by omitting any reference to the question whether a functional has weight $m_{j_0}$ or not.  For the rest of the proof we assume that there exists $j_0\in\mathbb{N}$ such that condition d) in the
 statement of Proposition is fulfilled.
\begin{proof}[Proof of Proposition~\ref{bin}]
Let $f\in K$ and let $\mc{T}_{f}=(f_{\alpha})_{\alpha\in\mc{A}}$ be a
tree of $f$. For every $k$ such that
$\text{supp}(f)\cap\text{range}(x_{k})\not=\emptyset$ we define the set $A_{k}$ as follows:
 \begin{align*}
A_{k}=\Big\{\alpha\in\mc{A}:&\,\,(i)\,\, \text{supp}f_{\alpha}
\cap\text{range}(x_{k})=
\text{supp}(f)\cap\text{range}(x_{k}),
\\
& (ii)\,\, \text{ for all}\,
\gamma\prec\alpha,\,\,w(f_{\gamma})\not=m_{j_{0}}\,,
\\
& (iii)\,\, \text{there is no}\,\,\beta\in S_{\alpha}\,\,
\text{such that}
\\
&\qquad \text{supp}(f_{\alpha})\cap\text{range}(x_{k})=
\text{supp}(f_{\beta})\cap\text{range}(x_{k})\,\,
\text{if}\,\, w(f_{\alpha})\not=m_{j_{0}} \Big\}\,.
\end{align*}
 From the definition, it follows easily that for every $k$ such that $\text{supp}(f)\cap\text{range}(x_{k})\not=\emptyset$
$A_{k}$ is a singleton.

We recursively define sets $(D_{\alpha})_{\alpha\in\mc{A}}$ as
follows.

For every terminal node $\alpha$ of the tree we set
$D_{\alpha}=\{k:\alpha\in A_{k}\}$.
For every non terminal node $\alpha$ we define,
\begin{equation*}
 D_{\alpha}=\{k:\alpha\in A_{k}\}\cup  \cup_{\beta\in
 S_{\alpha}}D_{\beta}\,\,.
 \end{equation*}
The following are easy
consequences of the definition.
\begin{enumerate}
\item[i)]
 If $\beta\prec\alpha$, $D_{\alpha}\subset D_{\beta}$.
\item[ii)] If $w(f_{\alpha})=m_{j_{0}}$, then $D_{\beta}=\emptyset$
 for all $\beta\succ\alpha$.
\item[iii)]
If $w(f_{\alpha})\not=m_{j_{0}}$, then for every $
\{ \{k\}:k\in D_{\alpha}\setminus \cup_{\beta\in
S_{\alpha}}D_{\beta}\}\cup\{D_{\beta}:
\beta\in S_{\alpha}\}$
is a family of successive subsets of $\mathbb{N}$.
\item[iv)] If $w(f_{\alpha})\not=m_{j_{0}}$, for every $k\in D_{\alpha}\setminus\cup_{\beta\in S_{\alpha}}D_{\beta}$
there exists $\beta\in S_{\alpha}$ such that
$\min\text{supp}x_{k}<
\min\text{supp}f_{\beta}\leq\max\text{supp}x_{k} $
and for $k^{\prime}\in D_{\alpha}\setminus \cup_{\beta\in S_{\alpha}}D_{\beta}$
different form $k$
the corresponding
 $\beta^{\prime}$ is different from $\beta$.
\end{enumerate}
Inductively for every $\alpha\in\mc{A}$ we define $g_{\alpha}^{1}$ and $g_{\alpha}^{2}$ such that
\renewcommand{\labelenumi}{\arabic{enumi}}
\begin{enumerate}
\item[(1)] For every $\alpha\in\mc{A}$,\,\,
$\text{supp}g_{\alpha}^{1}$ and $\text{supp}g_{\alpha}^{2}\subset
D_{\alpha}$.
\item[(2)] If $w(f_{\alpha})=m_{j_{0}}$,
$g_{\alpha}^{1}=e^{*}_{k_{\alpha}}$, where $\vert
b_{k_{\alpha}}\vert= \max_{k\in D_{\alpha}}\vert b_{k}\vert$ and
$g_{\alpha}^{2}=\varepsilon\sum_{k\in D_{\alpha}}e^{*}_{k}$\,.
\item[(3)] If $w(f_{\alpha})\not=m_{j_{0}}$,
$g_{\alpha}^{1}=h_{\alpha}$ or
$g_{\alpha}^{1}=e^{*}_{k_{\alpha}}+h_{\alpha}$ where
$k_{\alpha}\not\in\text{supp}h_{\alpha}$, $h_{\alpha}\in W$ and
$w(h_{\alpha})=w(f_{\alpha})$.
\item[(4)] For every $\alpha\in\mc{A}$
the following inequality holds
$$
\vert f_{\alpha}(\sum_{k\in D_{\alpha}}b_{k}x_{k})\vert\leq C(g_{\alpha}^{1}+g_{\alpha}^{2})(\sum_{k\in D_{\alpha}}\vert b_{k}\vert e_{k})\,\,.
$$
\end{enumerate}
For every terminal node we set $g_{\alpha}^{1}=g_{\alpha}^{2}=0$
if $D_{\alpha}=\emptyset$, otherwise we set $g_{\alpha}=e^{*}_{k}$
if $D_{\alpha}=\{k\}$ and $g_{\alpha}^{2}=0$. Assume that we have
defined the functionals $g_{\beta}^{1}$ and $g_{\beta}^{2}$,
satisfying $(1)-(4)$, for every $\beta\in\mc{A}$ with
$\vert\beta\vert=k$, and let $\alpha\in\mc{A}$ with
$\vert\alpha\vert=k-1$. If $D_{\alpha}=\emptyset$ we set
$g_{\alpha}^{1}=g_{\alpha}^{2}=0$. Let $D_{\alpha}\not=\emptyset$.
We distinguish two cases.

\textit{Case 1}. $w(f_{\alpha})=m_{j}\not= m_{j_{0}}$.

 Let $T_{\alpha}= D_{\alpha}\setminus\cup_{\beta\in S_{\alpha}}
 D_{\beta}=\{k: \alpha\in A_{k}\}$. We
 set $T_{\alpha}^{2}=\{k\in T_{\alpha} : m_{j_{k+1}}\leq m_{j}\}$ and
 $T_{\alpha}^{1}=T_{\alpha}\setminus T_{\alpha}^{2}$. In the
 pointwise estimations we
 shall make below, we shall discard the coefficient $\lambda_{f_{2i}}$,
 which appears in the  definition of the special functionals,
 since $\vert\lambda_{f_{2i}}\vert\leq 1$.

 From condition $b)$ in the statement,
 it follows that for each $k\in T_{\alpha}^{2}$
 \begin{equation}\label{ee1}
\vert f_{\alpha}(x_{k})\vert \leq
\#(\text{supp}x_{k})\norm[f_{\alpha}]_{\infty}\leq
\#(\text{supp}x_{k})\frac{1}{m_{j}}\leq\varepsilon\leq C\varepsilon\,\,.
\end{equation}
We define
$$
g_{\alpha}^{2}= \varepsilon\sum_{k\in
T_{\alpha}^{2}}e_{k}^{*}+\sum_{\beta\in S_{\alpha}}g_{\beta}^{2}\,\,.
$$
 We observe that $\norm[g_{\alpha}^{2}]_{\infty}\leq\varepsilon$, and
that $\vert f_{\alpha}(x_{k})\vert\leq C\varepsilon=Cg_{\alpha}^{2}(e_{k})$, for
every $k\in T_{\alpha}^{2}$.

 Let $T_{\alpha}^{1}=\{k_{1}<k_{2}<\ldots<k_{l}\}$. By the definition of
 $T_{\alpha}^{1}$   we have
 that $m_{j}<m_{j_{k_{2}}}<m_{j_{k_{3}}}<\ldots<m_{j_{k_{l}}}$.
 Thus  condition $c)$ in the  statement implies that
\begin{equation}\label{ee2}
\vert f_{\alpha}(x_{k_{i}})\vert\leq \frac{C}{m_{j}}= \frac{1}{m_{j}}e^{*}_{k_{i}}(Ce_{k_{i}}),\quad\text{for every}\,\, 2\leq i\leq l\,.
\end{equation}
We set
$$
g_{\alpha}^{1}=e_{k_{1}}^{*}+ \frac{1}{m_{j}}(\sum_{i=2}^{l}e^{*}_{k_{i}}+\sum_{\beta\in
S_{\alpha}}g_{\beta}^{1})\,.
$$
(The term $e^{*}_{k_{1}}$ does not appear if
$w(f_{\alpha})<m_{j_{k}}$ for every $k\in T_{\alpha}$). We have to
show that
$h_{\alpha}= \frac{1}{m_{j}}(\sum_{i=2}^{l}e^{*}_{k_{i}}+\sum_{\beta\in
S_{\alpha}}g_{\beta}^{1})\in W$. From the inductive hypothesis, we
have that $g^{1}_{\beta}=h_{\beta}$ or
$g^{1}_{\beta}=e^{*}_{k_{\beta}}+h_{\beta}$, $h_{\beta}\in W$, for
every $\beta\in S_{\alpha}$. For $\beta\in S_{\alpha}$, such that
$g_{\beta}^{1}=e^{*}_{k_{\beta}}+h_{\beta}$, let
$E_{\beta}^{1}=\{n\in\mathbb{N}: n<k_{\beta}\}$ and
$E_{\beta}^{2}=\{n\in\mathbb{N}: n>k_{\beta}\}$. We set
$h_{\beta}^{1}=E_{\beta}^{1}h_{\beta}$,
$h_{\beta}^{2}=E_{\beta}^{2}h_{\beta}$.
For every $\beta$ such that $g_{\beta}^{1}=e^{*}_{k_{\beta}}+h_{\beta}$, the functionals $h_{\beta}^{1}$, $e^{*}_{k_{\beta}}$,
$h_{\beta}^{2}$ are successive  belonging to $W$, and for
$\beta\not=\beta^{\prime}\in S_{\alpha}$ the corresponding
functionals have disjoint range, since $\text{supp}g_{\beta}^{1}$ is
an interval, remark (iii) after the definition of $D_{\alpha}$.
From the remark iv) after the definition of $D_{\alpha}$ we have
that $\#T_{\alpha}^{1}\leq n_{j}$. It follows that $$ \#(\{
e^{*}_{k_{i}}, 2\leq i\leq l\}\cup\{ e^{*}_{k_{\beta}},
h^{1}_{\beta}, h^{2}_{\beta}:\, \beta\in S_{\alpha},
g_{\beta}=e^{*}_{k_{\beta}}+h_{\beta}\} \cup\{h_{\beta} :\,
\beta\in S_{\alpha}, g_{\beta}=h_{\beta}\} )\leq 4n_{j}\,. $$
Therefore
$h_{\alpha}=\frac{1}{m_{j}}(\sum_{i=2}^{l}e^{*}_{k_{l}}+\sum_{\beta\in
S_{\alpha}}g_{\beta}^{1})\in W$. It remains to show property $4).$
By \eqref{ee2} we have that $\vert f_{\alpha}(x_{k_{i}})\vert\leq
Cg_{\alpha}^{1}(e_{k_i})$ for every $2\leq i\leq l$, while $$
\vert f_{\alpha}(x_{k_{1}})\vert\leq \norm[x_{k_{1}}]\leq
Ce^{*}_{k_{1}}(e_{k_{1}})=g_{\alpha}^{1}(Ce_{k_{1}})\,. $$ We also
have that
\begin{align*}
\vert f_{\alpha}(\sum_{k\in \cup_{\beta\in
S_{\alpha}}D_{\beta}}b_{k}x_{k})\vert & \leq \sum_{\beta\in
S_{\alpha}}\vert f_{\alpha} (\sum_{k\in D_{\beta}}b_{k}x_{k})\vert \\
&\leq \frac{1}{m_{j}}\sum_{\beta\in S_{\alpha}}\vert
f_{\beta}(\sum_{k\in D_{\beta}}b_{k}x_{k})\vert\\ & \leq
\frac{1}{m_{j}}\sum_{\beta\in
S_{\alpha}}(g_{\beta}^{1}+g_{\beta}^{2})(C\sum_{k\in
D_{\beta}}\vert b_{k}\vert e_{k})\vert\\ & \leq (g_{\alpha}^{1}+
g_{\alpha}^{2})(C\sum_{k\in D_{\alpha}}\vert b_{k}\vert
e_{k})\vert\,.
\end{align*}
\textit{Case 2}. $w(f_{\alpha})=m_{j_{0}}$. \noindent In this case we have that $D_{\alpha}$ is an
 interval of the positive integers and $D_{\gamma}=\emptyset$, for every $\gamma\succ\alpha$. Let
 $k_{\alpha}$ such that $b_{k_{\alpha}}=\max_{k\in D_{\alpha}}\vert b_{k}\vert$. We set
 $$
 g_{\alpha}^{1}=e^{*}_{k_{\alpha}}\,\,\,\text{and}\,\,\, g_{\alpha}^{2}=\varepsilon\sum_{k\in
 D_{\alpha}}e^{*}_{k}\,.
 $$
 Then we have that
$$ \vert f_{\alpha} (\sum_{k\in D_{\alpha}}b_{k}x_{k})\vert \leq
C(\max_{k\in D_{\alpha}}\vert b_{k}\vert+ \varepsilon\sum_{k\in
D_{\alpha}}\vert
b_{k}\vert)=(g_{\alpha}^{1}+g_{\alpha}^{2})(C\sum_{k\in
D_{\alpha}}\vert b_{k}\vert e_{k})\,\,.
$$
\end{proof}
\begin{definition}
 Let $k\in\mathbb{N}$. A vector $x\in c_{00}$ is said to be
 a $C-\ell_{1}^{k}$ average if there
 exists $x_{1}<\ldots<x_{k}$, $\norm[x_{i}]\leq C\norm[x]$ and
 $x=\frac{1}{k}\sum_{i=1}^{k}x_{i}$. Moreover, if $\norm[x]=1$ then
 $x$ is called a normalized $C-\ell_{1}^{k}$ average.
\end{definition}
\begin{lemma}\label{106}
Let $j\geq 1$, $x$ be an $C-\ell_{1}^{n_{j}}$-average.  Then for
every $n\leq n_{j-1}$ and
 every
 $E_{1}<\ldots<E_{n}$,
we have that $$ \sum_{i=1}^{n}\norm[E_{i}x]\leq
C(1+\frac{2n}{n_{j}})
 <\frac{3}{2}C. $$
\end{lemma}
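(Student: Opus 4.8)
The plan is to reduce the whole estimate to the triangle inequality together with a simple counting of how the intervals $E_{1}<\cdots<E_{n}$ cut the blocks of the average. Write $x=\frac{1}{n_{j}}\sum_{k=1}^{n_{j}}x_{k}$ with $x_{1}<\cdots<x_{n_{j}}$ and $\norm[x_{k}]\le C\norm[x]$. Reading the conclusion for a normalized average (the homogeneous form of the bound being $\sum_{i}\norm[E_{i}x]\le C(1+\tfrac{2n}{n_{j}})\norm[x]$), I may assume $\norm[x]=1$, so that $\norm[x_{k}]\le C$ for every $k$. Since $(e_{n})_{n}$ is bimonotone, every interval projection has norm at most one, and therefore for each $i$
\[
\norm[E_{i}x]=\frac{1}{n_{j}}\norm[\sum_{k}E_{i}x_{k}]\le \frac{1}{n_{j}}\sum_{k:\,\range(x_{k})\cap E_{i}\neq\emptyset}\norm[E_{i}x_{k}].
\]
Summing over $i$, it suffices to bound $\frac{1}{n_{j}}\sum_{(i,k)}\norm[E_{i}x_{k}]$, the sum running over all pairs $(i,k)$ with $\range(x_{k})\cap E_{i}\neq\emptyset$.

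Next I would split these pairs into two types. Call a pair \emph{interior} if $\range(x_{k})\subseteq E_{i}$, and \emph{boundary} otherwise. For an interior pair one has $E_{i}x_{k}=x_{k}$, so $\norm[E_{i}x_{k}]=\norm[x_{k}]\le C$; moreover, since $E_{1}<\cdots<E_{n}$ are pairwise disjoint, each block $x_{k}$ is contained in at most one of them, hence each index $k$ occurs in at most one interior pair. Consequently the interior pairs contribute at most $\frac{1}{n_{j}}\sum_{k=1}^{n_{j}}\norm[x_{k}]\le \frac{1}{n_{j}}\cdot n_{j}C=C$.

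For the boundary pairs the key point is a per-interval count. For fixed $i$, a block $x_{k}$ meeting $E_{i}$ but not contained in it must satisfy $\min\range(x_{k})<\min E_{i}\le\max\range(x_{k})$ or $\min\range(x_{k})\le\max E_{i}<\max\range(x_{k})$, i.e.\ it straddles the left or the right endpoint of $E_{i}$. As the $x_{k}$ are successive, at most one block straddles each endpoint, so there are at most two boundary pairs for each $i$, hence at most $2n$ boundary pairs in all. Using $\norm[E_{i}x_{k}]\le\norm[x_{k}]\le C$ once more, their total contribution is at most $\frac{1}{n_{j}}\cdot 2nC=\frac{2n}{n_{j}}C$. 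Adding the two contributions gives $\sum_{i=1}^{n}\norm[E_{i}x]\le C\bigl(1+\frac{2n}{n_{j}}\bigr)$.

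The strict bound then follows from the rapid growth of $(n_{j})$: since $n\le n_{j-1}$ and $n_{j}=(4n_{j-1})^{s_{j-1}}$ with $s_{j-1}\ge 3$, we have $n_{j}\ge 4n_{j-1}$ and hence $\frac{2n}{n_{j}}\le\frac{2n_{j-1}}{4n_{j-1}}=\frac{1}{2}$ (in fact far smaller), so $C(1+\frac{2n}{n_{j}})<\frac{3}{2}C$. The only place demanding a little care is the per-interval counting of the boundary blocks, which is exactly where successivity of the $x_{k}$ and the hypothesis $n\le n_{j-1}$ enter; everything else is the triangle inequality, so I anticipate no serious obstacle.
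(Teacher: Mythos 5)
Your proof is correct, and it is essentially the argument the paper has in mind: the paper does not prove Lemma~\ref{106} itself but refers to \cite{Sh} and \cite{GM} (Lemma 4), where exactly this counting is carried out — interior blocks contribute at most once each by disjointness of the $E_i$, while successivity of the blocks leaves at most two straddling blocks per interval, giving $C(1+\frac{2n}{n_j})$. Your normalization remark (reading the bound homogeneously in $\norm[x]$) and the use of $s_{j-1}\ge 3$ to get the strict inequality $\frac{2n}{n_j}<\frac12$ correctly fill in the details the paper leaves implicit.
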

We refer to \cite{Sh}, (or \cite{GM}, Lemma 4), for a proof.
\begin{proposition}\label{p18}
For every  normalized block sequence $(y_{\ell})_{\ell}$ and every
$k\ge m_2$ there exists  a linear combination of
$(y_{\ell})_{\ell}$ which is a normalized $2-\ell_1^k$ average.
\end{proposition}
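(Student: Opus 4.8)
The plan is to argue by contradiction: assume that \emph{no} linear combination of $(y_\ell)_\ell$ is a normalized $2$-$\ell_1^k$ average, and contradict the lower $\ell_1$-estimate furnished by property (iii) of the norming set $K$. The whole argument takes place inside $\langle y_\ell\rangle$, so by a \emph{block} I always mean a finite linear combination of the $y_\ell$'s, and $w_1<\cdots<w_n$ means the corresponding index intervals are successive.

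First I would reformulate the hypothesis. For $n\ge 1$ set
\[
R(n)=\sup\Big\{\norm[\,\textstyle\sum_{i=1}^{n}w_i\,]:\ w_1<\cdots<w_n\ \text{blocks of }(y_\ell)\ \text{with}\ \norm[w_i]\le 1\Big\}.
\]
Clearly $R(1)=1$ and $R(n)\le n$. The point of allowing blocks of norm $\le 1$, rather than normalized ones, is that it makes $R$ submultiplicative. Moreover, if for some admissible choice $w_1<\cdots<w_k$ one had $\norm[\textstyle\sum_{i=1}^k w_i]\ge k/2$, then, normalizing, $x=\norm[\textstyle\sum w_i]^{-1}\sum_{i=1}^k w_i$ would satisfy $x=\frac1k\sum_{i=1}^k x_i$ with $x_i=k\,\norm[\textstyle\sum w_j]^{-1}w_i$ and $\norm[x_i]\le k/\norm[\textstyle\sum w_j]\le 2=2\norm[x]$; that is, $x$ would be a normalized $2$-$\ell_1^k$ average, contrary to our assumption. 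Hence under the contradiction hypothesis $R(k)\le k/2$.

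The heart of the proof, and the step I expect to be the only delicate one, is the submultiplicativity
\[
R(nm)\le R(n)\,R(m).
\]
To see it, take $w_1<\cdots<w_{nm}$ with $\norm[w_i]\le1$, group them into $n$ consecutive blocks $U_i=\sum_{t=(i-1)m+1}^{im}w_t$, and note $\norm[U_i]\le R(m)$. Rescaling, the vectors $v_i=R(m)^{-1}U_i$ are again blocks of norm $\le1$ with $v_1<\cdots<v_n$, so $\norm[\textstyle\sum_{i=1}^n v_i]\le R(n)$, which gives $\norm[\textstyle\sum_{t=1}^{nm}w_t]\le R(n)R(m)$; taking the supremum yields the claim. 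Iterating, $R(k^p)\le R(k)^p\le (k/2)^p$, and by monotonicity $R(n)\le (k/2)^{\lceil\log_k n\rceil}$ for every $n$.

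Finally I would play this upper bound against the lower estimate. For any $j$, applying property (iii) to $x=\sum_{i=1}^{n_{2j}}y_i$ with the intervals $E_i=\range(y_i)$ gives
\[
\norm[\,\textstyle\sum_{i=1}^{n_{2j}}y_i\,]\ \ge\ \frac{1}{m_{2j}}\sum_{i=1}^{n_{2j}}\norm[y_i]=\frac{n_{2j}}{m_{2j}}.
\]
On the other hand $\sum_{i=1}^{n_{2j}}y_i$ is a sum of $n_{2j}$ successive normalized blocks, so its norm is at most $R(n_{2j})\le (k/2)^{\lceil\log_k n_{2j}\rceil}\le \frac{k}{2}\,n_{2j}^{\,1-\log_k 2}$. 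Comparing the two bounds, a contradiction follows as soon as $n_{2j}^{\log_k 2}>\tfrac{k}{2}m_{2j}$, i.e. as soon as $n_{2j}^{\,c}/m_{2j}$ is large for the fixed constant $c=\log_k 2>0$ (here $k\ge m_2$ leaves ample room). That this happens for $j$ large is exactly the content of the growth conditions (i),(ii): since $2^{s_j}\ge m_{j+1}^3$ forces $n_{2j}$ to outgrow any fixed power of itself against $m_{2j}$, one can choose $j$ with $n_{2j}^{\,c}>\tfrac{k}{2}m_{2j}$. This contradiction proves that a normalized $2$-$\ell_1^k$ average must exist among the linear combinations of $(y_\ell)_\ell$.
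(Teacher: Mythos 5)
Your proof is correct, and at bottom it is the same argument the paper gives: assume no normalized $2$-$\ell_1^k$ average exists among combinations of $(y_\ell)$, deduce a geometric decay estimate for norms of long sums of successive blocks, and contradict the lower $\ell_1$-estimate $\Vert\sum_{i=1}^{n}y_i\Vert\ge n/m_{2j}$ (for $n\le n_{2j}$) that property (iii) of the norming set provides. The differences are in execution. The decay estimate you derive via the submultiplicative quantity $R(n)$ is precisely Lemma 3 of the Gowers--Maurey paper \cite{GM}, which the paper simply cites; your version is self-contained, which is a genuine (if modest) gain. More substantively, the paper ties the scale to $k$ from the start: it picks $j$ with $m_{2j-1}<k\le m_{2j+1}$ and $s=s_{2j+1}$, so that $k^{s}\le(4n_{2j+1})^{s}=n_{2j+2}$ and $2^{-s}<1/m_{2j+2}$, and the two estimates $k^{s}/m_{2j+2}\le\Vert\sum_{i=1}^{k^{s}}y_i\Vert<k^{s}2^{-s}$ clash outright, with no asymptotics needed. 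You instead let $j\to\infty$, which forces you to verify that $n_{2j}^{c}>\tfrac{k}{2}m_{2j}$ eventually, where $c=\log_k 2$ is small when $k$ is large; your one-sentence justification of this is muddled ("outgrow any fixed power of itself"), and note that the crude bound $n_{2j}\ge m_{2j}^{6}$ alone would \emph{not} suffice for large $k$. The claim is nevertheless true and follows in one line from condition (ii): $\log_2 n_{2j}\ge s_{2j-1}\log_2 n_{2j-1}\ge 3(\log_2 m_{2j})(\log_2 n_{2j-1})$, i.e. $n_{2j}\ge m_{2j}^{3\log_2 n_{2j-1}}$, and since $\log_2 n_{2j-1}\to\infty$ one eventually has $n_{2j}^{c}\ge m_{2j}^{2}\ge\tfrac{k}{2}m_{2j}$. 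So the net comparison is: same strategy, yours self-contained where the paper cites \cite{GM}, but lossier in the final parameter choice, which costs an extra (easy, and in your write-up only gestured-at) growth verification that the paper's sharper choice of indices avoids.
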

\begin{proof}
Given $k\ge m_2$ there exists $j\in\mathbb{N}$ such that
$m_{2j-1}<k\le m_{2j+1}$. Recall that $n_{2j+2}=(4n_{2j+1})^{s_{2j+1}}$
and $m_{2j+2}^3<2^{s_{2j+1}}$.
Hence setting $s=s_{2j+1}$ we have that $k^s\le n_{2j+2}$ and
 $2^{-s}< \frac{1}{m_{2j+2}}$.
  Observe that
 \begin{equation}\label{107}
\norm[\sum\limits_{i=1}^{k^s}y_i]\ge \frac{k^s}{m_{2j+2}}\,\,.
 \end{equation}
 Assuming that there is no normalized $2-\ell_1^k$ average in
 $\langle y_i:\; i\le k^s\rangle$ and following the proof of Lemma 3 in \cite{GM}
 we obtain that
 \begin{equation}\label{108}
\norm[\sum\limits_{i=1}^{k^s}y_i]<k^s\cdot 2^{-s}.
 \end{equation}
Since $2^{-s}< \frac{1}{m_{2j+2}}$, (\ref{107}) and (\ref{108})
derive  a contradiction.
 \end{proof}
\begin{definition}
A block sequence $(x_{k})$ in $X_{ius}$ is said to be a $(C,\varepsilon)$-rapidly
increasing sequence  (R.I.S.), if there, exists a strictly
increasing sequence $(j_{k})$ of positive integers such that

\noindent a) $\norm[x_{k}]\leq C$.

\noindent b) $\#(\range(x_{k}))\frac{1}{m_{j_{k+1}}}<\varepsilon$.

\noindent c) For every $k=1,2,\ldots$ and every $f\in K$ with
$w(f)<m_{j_{k}}$  we have that $\vert f(x_{k})\vert \le
\frac{C}{w(f)}$.
\end{definition}

 \begin{remark}
Let $(x_k)_k$ be  a block sequence in $X_{ius}$ such that each
$x_k$ is a normalized $\frac{2C}{3}-\ell_1^{n_{j_k}}$ average and
let $\varepsilon>0$ be such that for each $k$,
$\#(\range(x_{k}))\frac{1}{m_{j_{k+1}}}<\varepsilon$. Then Lemma \ref{106}
yields that condition (c) in the above definition is also
satisfied hence $(x_k)_k$ is a $(C,\varepsilon)$ R.I.S. In this case we
shall call $(x_k)_k$ as a $\mathbf{(C,\varepsilon)}$ {\bf R.I.S. of}
$\mathbf{\ell_1}$ {\bf averages}. Let also observe that
Proposition \ref{p18} ensures that for every block sequence
$(y_{\ell})_{\ell}$ and every $\varepsilon>0$ there exists $(x_k)_k$ which is  a $(3,\varepsilon)$ R.I.S. of
$\ell_1$ averages.
\end{remark}
\begin{proposition}\label{ris}
Let $(x_{k})_{i=1}^{n_{j}}$ be a $(C,\varepsilon)$- R.I.S such that
$\varepsilon\leq\frac{1}{n_{j}}$. Then

1) For every $f\in K$
$$
\vert f(\frac{1}{n_{j}}\sum_{k=1}^{n_{j}}x_{k})\vert
\leq
\begin{cases}
\frac{3C}{m_{j}w(f)}\,,\quad &\text{if}\,\,\, w(f)<m_{j}\\
\frac{C}{w(f)}+\frac{2C}{n_{j}}\,,&\text{if}\,\,\, w(f)\geq m_{j}\,.
\end{cases}
$$ In particular $\norm[\frac{1}{n_{j}}\sum\limits_{k=1}^{n_{j}}x_{k}]
\leq\frac{2C}{m_{j}}$.

2) If for  $j_{0}=j$ the assumption d) of the basic inequality
is fulfilled (Proposition~\ref{bin}), for a linear combination
$\frac{1}{n_{j}}\sum_{i=1}^{n_{j}}b_{i}x_{i}$, where $\vert
b_{i}\vert\leq 1$, then 
$$
\norm[\frac{1}{n_{j}}\sum_{i=1}^{n_{j}}b_{i}x_{i}]
\leq\frac{4C}{m_{j}^{3}}\,\,.
$$
3) If $(x_{i})_{i=1}^{n_{2j}}$ is a $(3,\varepsilon)$ rapidly increasing
sequence of $\ell_{1}$ averages then
\begin{equation} \label{105}
\frac{1}{m_{2j}}\leq \norm[\frac{1}{n_{2j}}\sum_{i=1}^{n_{2j}}x_{i}]
\leq\frac{6}{m_{2j}}\,\,.
\end{equation}
\end{proposition}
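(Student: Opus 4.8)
The plan is to derive all three estimates from the basic inequality (Proposition~\ref{bin}), which reduces every quantity of the form $|f(\frac{1}{n_j}\sum b_k x_k)|$, $f\in K$, to an estimate of the auxiliary functionals $g_1,g_2$ evaluated on the corresponding average $\frac{1}{n_j}\sum|b_k|e_k$ of the basis of $X_u$. Since a $(C,\varepsilon)$--R.I.S. satisfies exactly the hypotheses $a),b),c)$ of Proposition~\ref{bin}, for each $f\in K$ we obtain $g_1=h_1$ or $g_1=e_t^*+h_1$ with $h_1\in W$ and $w(h_1)=w(f)$, together with $\norm[g_2]_\infty\le\varepsilon$, and then the estimates of Lemma~\ref{42} on functionals of $W$ acting on basis averages finish the computation. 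Throughout I will use that $\varepsilon\le 1/n_j$ and that $n_j$ dwarfs every fixed power of $m_j$ (indeed $n_j\ge m_j^6$ for $j\ge 2$, since $n_{j+1}=(4n_j)^{s_j}\ge 4^{s_j}\ge m_{j+1}^6$), so that all terms of order $C/n_j$ are negligible.

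For part (1), I apply Proposition~\ref{bin} with $b_k=1$, so that $|f(\frac{1}{n_j}\sum_{k=1}^{n_j}x_k)|\le C(g_1+g_2)(\frac{1}{n_j}\sum_{k=1}^{n_j}e_k)$. The $g_2$--term is at most $C\varepsilon\le C/n_j$ and the possible $e_t^*$--summand of $g_1$ contributes at most $C/n_j$. The main term $h_1(\frac{1}{n_j}\sum e_k)$ is estimated by \eqref{basise1}: when $w(f)=w(h_1)<m_j$ it is at most $\frac{2}{w(f)m_j}$, and when $w(f)\ge m_j$ it is at most $\frac{1}{w(f)}$. Collecting terms gives $\frac{2C}{w(f)m_j}+\frac{2C}{n_j}$ in the first case and $\frac{C}{w(f)}+\frac{2C}{n_j}$ in the second; since $n_j\ge 2w(f)m_j$ the extra term in the first case is absorbed into $\frac{3C}{m_jw(f)}$, which is the displayed dichotomy. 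Taking the supremum over $f\in K$ (handling $f=\pm e_n^*$ directly by $|e_n^*(x)|\le\norm[x]_\infty$) and using $w(f)\ge m_1=2$ together with $n_j\ge 2m_j$, every case is bounded by $\frac{2C}{m_j}$, which is the ``in particular'' assertion.

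Part (2) is the delicate point and the place where condition $d)$ enters. Applying Proposition~\ref{bin} in its refined form with $j_0=j$, the functional $h_1$ may be chosen to admit a tree all of whose weights differ from $m_j$; this is precisely the hypothesis under which \eqref{basise2} applies and produces the cubic gain $|h_1(\frac{1}{n_j}\sum e_i)|\le\frac{2}{m_j^3}$. The one subtlety is that we now carry coefficients $|b_i|\le 1$: in the decomposition underlying \eqref{basise2}, the high--weight part is bounded by $1/m_{j+1}<1/m_j^3$ independently of the coefficients, while on the complementary part $h_1$ restricts to an element of $W^{(j-1)}$, so by Lemma~\ref{103}(2) and the monotonicity of the $\ell_{p_{j-1}}$--norm in the moduli of the coordinates we again get $|h_1(\frac{1}{n_j}\sum|b_i|e_i)|\le\frac{2}{m_j^3}$. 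Adding the $e_t^*$-- and $g_2$--contributions, each of size at most $C/n_j\le C/m_j^3$, yields $\norm[\frac{1}{n_j}\sum b_i x_i]\le\frac{2C}{m_j^3}+\frac{2C}{n_j}\le\frac{4C}{m_j^3}$.

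Finally, for part (3) the upper estimate is part (1) applied with $C=3$ at the index $2j$, giving $\frac{2\cdot 3}{m_{2j}}=\frac{6}{m_{2j}}$. The lower estimate does not use the basic inequality at all: setting $E_i=\range(x_i)$ and invoking property (iii) of $K$ in the form of the $(\mathcal{A}_{n_{2j}},\frac{1}{m_{2j}})$--norming inequality recorded after the definition of $X_{ius}$,
\[
\norm[\sum_{i=1}^{n_{2j}}x_i]\ge\frac{1}{m_{2j}}\sum_{i=1}^{n_{2j}}\norm[E_i\big(\sum_l x_l\big)]=\frac{1}{m_{2j}}\sum_{i=1}^{n_{2j}}\norm[x_i]=\frac{n_{2j}}{m_{2j}},
\]
whence $\norm[\frac{1}{n_{2j}}\sum x_i]\ge\frac{1}{m_{2j}}$ because each $x_i$ is normalized. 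The main obstacle of the whole argument is the coefficient bookkeeping in part (2): one must check that passing from the equal--coefficient average of Lemma~\ref{42} to a general average with $|b_i|\le 1$ preserves the cubic estimate, which is exactly where the $\ell_{p_{j-1}}$--monotonicity and the weight--avoidance granted by $d)$ are needed.
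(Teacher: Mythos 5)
Your proof is correct and follows essentially the same route as the paper's: parts (1) and (2) are obtained exactly as in the paper by feeding the $(C,\varepsilon)$-R.I.S. hypotheses into Proposition~\ref{bin} and then estimating $h_1$ via Lemma~\ref{42} and Lemma~\ref{103} (your explicit remark on the $\ell_{p_{j-1}}$-monotonicity for the coefficients $|b_i|\le 1$ in part (2) fills in a step the paper leaves implicit), and the upper bound in (3) is part (1) with $C=3$. For the lower bound in (3) you invoke the recorded inequality $\norm[x]\geq\frac{1}{m_{2j}}\sum_i\norm[E_ix]$ rather than the paper's explicit norming functional $\frac{1}{m_{2j}}\sum_{i}f_i$, but these are the same argument, since that inequality is itself proved by exactly this construction.
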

\begin{proof} The proof of 1) is an application of the basic
inequality and Lemma~\ref{42}. Indeed for $f\in K$, the basic
inequality yields that  there exist $h_{1}\in W$ with
$w(f)=w(h_{1})$, $t\in\mathbb{N}$ with $t\not\in\text{supp}h_{1}$,
and $h_{2}\in c_{00}$ with $\norm[h_{2}]_{\infty}\leq \varepsilon$, such
that
\begin{equation}\label{eris1}
 \vert f(\frac{1}{n_{j}}\sum_{k=1}^{n_{j}}x_{k})\vert \leq
 (e_{t}^{*}+h_{1}+h_{2})
 C(\frac{1}{n_{j}}\sum_{k=1}^{n_{j}}e_{k})\,.
\end{equation}
Using Lemma~\ref{42} and the fact that $\varepsilon\leq\frac{1}{n_{j}}$ we
obtain
\begin{equation}\label{eris2}
\vert f(\frac{1}{n_{j}}\sum_{k=1}^{n_{j}}x_{k})\vert \leq
\begin{cases}
 \frac{C}{n_{j}}+
\frac{2C}{w(f)m_{j}}+C\varepsilon\leq\frac{3C}{w(f)m_{j}}\,\,\,&
 \text{if}\,\,w(f)<m_{j}\\ \frac{C}{n_{j}}+
 \frac{C}{w(f)}+C\varepsilon\leq\frac{C}{w(f)}+ \frac{2C}{n_{j}}\,\,\,&
 \text{if}\,\,w(f)\geq m_{j}\,\,.
\end{cases}
\end{equation}
To prove 2) we observe that the basic inequality yields the
existence of $h_{1}$, $h_{2}$ such that $h_{1}$ has a tree
$(h_{\alpha})_{\alpha\in\mc{A}}$ such that
$w(h_{\alpha})\not=m_{j}$ for every $\alpha\in\mc{A}$ and
$\norm[h_{2}]_{\infty}\leq \varepsilon.$ This and Lemma~\ref{42} yield that
\begin{equation}\label{eris3}
\vert f(\frac{1}{n_{j}}\sum_{k=1}^{n_{j}}b_{k}x_{k})\vert \leq
(e_{t}^{*}+h_{1}+h_{2}) C(\frac{1}{n_{j}}\sum_{k=1}^{n_{j}}e_{k})
\leq
\frac{C}{n_{j}}+\frac{2C}{m_{j}^{3}}+C\varepsilon \leq\frac{4C}{m_{j}^{3}}\,\,.
\end{equation}
The upper estimation in 3) follows from 1) for $C=3$. For the
lower estimation in 3), for every $i\leq n_{2j}$ we choose a
functional $f_{i}$ belonging to the pointwise closure of $K$
such that $f_{i}(x_{i})=1$ and
$\text{range}(f_{i})\subset\text{range}(x_{i})$. Then it is easy to see
that the
functional $f=\frac{1}{m_{2j}}\sum\limits_{i=1}^{n_{2j}}f_{i}$
belongs to the same set and
provides the required  result.
\end{proof}

\begin{proposition}
The space $X_{ius}$ is reflexive.
\end{proposition}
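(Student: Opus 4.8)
The plan is to invoke the classical characterization (see \cite{LT}) that a space with a basis is reflexive precisely when that basis is both boundedly complete and shrinking. Since the basis $(e_n)_n$ of $X_{ius}$ has already been shown to be bimonotone and boundedly complete, it remains only to prove that $(e_n)_n$ is \emph{shrinking}; then reflexivity follows at once. Recall that $(e_n)_n$ is shrinking if and only if $\norm[{f|_{[m,\infty)}}]\to 0$ as $m\to\infty$ for every $f\in X_{ius}^*$, where $f|_{[m,\infty)}=f\circ P_{[m,\infty)}$. I would argue by contradiction, and the entire weight of the argument will rest on the averaging estimate of Proposition~\ref{ris}(1) together with the fact that $m_{2j}\to\infty$.

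Suppose $(e_n)_n$ is not shrinking. Then there are $f\in X_{ius}^*$ with $\norm[f]=1$ and $\delta>0$ such that $\norm[{f|_{[m,\infty)}}]>\delta$ for all $m$. Using density of $c_{00}$ I select, on successive tails, finitely supported normalized vectors of $f$-value exceeding $\delta$, and after replacing each by $\pm$ itself I obtain a normalized block sequence $(x_k)_k$ with $f(x_k)\ge\delta$ for every $k$. The crucial step is to turn $(x_k)_k$ into a rapidly increasing sequence of $\ell_1$-averages on which $f$ remains bounded below. For a consecutive index set $I$ with $\#I=N$ set $u=\frac1N\sum_{k\in I}x_k$; then $f(u)\ge\delta$ and $\norm[u]\le 1$, so in particular $\norm[u]\ge f(u)\ge\delta$, and the normalization $\tilde u=u/\norm[u]$ is a normalized $\frac1\delta$-$\ell_1^N$-average satisfying $f(\tilde u)=f(u)/\norm[u]\ge\delta$ (here $\norm[u]\le 1$ is what preserves the lower bound). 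I expect this to be the delicate point of the proof: the unit-coefficient averaging, combined with the normalization estimate $\norm[u]\le 1$, is exactly what simultaneously guarantees that $\tilde u$ is a genuine $\ell_1$-average and that $f(\tilde u)\ge\delta$.

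Fixing a large $j$ and choosing $N=n_{j_i}$ for a sufficiently rapidly increasing sequence $(j_i)$, the Remark following Proposition~\ref{ris} (which rests on Lemma~\ref{106} and Proposition~\ref{p18}) shows that the resulting normalized averages $z_1<z_2<\cdots<z_{n_{2j}}$ form a $(C,\varepsilon)$-R.I.S.\ of $\ell_1$-averages with $C=\frac{3}{2\delta}$ and $\varepsilon\le\frac1{n_{2j}}$, while each still satisfies $f(z_i)\ge\delta$. The spacing condition $b)$ in the definition of a R.I.S.\ is arranged by taking the $j_i$ increasing fast enough, and this can be done separately for each prescribed $j$ since the block sequence $(x_k)_k$ is infinite.

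Finally I apply Proposition~\ref{ris}(1) to this R.I.S.\ of length $n_{2j}$, which yields $\norm[\frac1{n_{2j}}\sum_{i=1}^{n_{2j}}z_i]\le\frac{2C}{m_{2j}}$. On the other hand, $f\bigl(\frac1{n_{2j}}\sum_{i=1}^{n_{2j}}z_i\bigr)=\frac1{n_{2j}}\sum_{i=1}^{n_{2j}}f(z_i)\ge\delta$, so that $\delta\le\norm[f]\cdot\frac{2C}{m_{2j}}=\frac{2C}{m_{2j}}$. Letting $j\to\infty$ and using $m_{2j}\to\infty$ forces $\delta\le 0$, a contradiction. Hence $(e_n)_n$ is shrinking, and since it is also boundedly complete, $X_{ius}$ is reflexive.
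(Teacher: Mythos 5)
Your proof is correct and takes essentially the same route as the paper: argue by contradiction, extract a block sequence on which a non-shrinking functional stays bounded below by $\delta$, form unit-coefficient averages of it to obtain a R.I.S.\ of $\ell_1$ averages, and contradict the upper estimate of Proposition~\ref{ris}(1) as $m_{2j}\to\infty$. The only difference is bookkeeping: the paper keeps the averages unnormalized and derives the lower bound from the fact that every convex combination of the block sequence has norm bounded below, whereas you normalize the averages and track the $f$-values directly; this is an immaterial variation.
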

\begin{proof}
As we have already explained after the definition of the norming set $K,$ the basis is boundedly complete.
Therefore to show that the space $X_{ius}$ is reflexive we need to prove that the basis is shrinking.

Assume on the contrary. Namely there exists
$x^*=w^*-\sum\limits_{n=1}^{\infty}b_ne_n^*$ and $x^*\not\in
\overline{ <e_n^*>}$. Then there exists $\varepsilon>0$ and successive
intervals $(E_k)_k$ such that $\norm[E_k x^{*}]>\varepsilon$. Choose $(x_k)_k$
in $X_{ius}$ such that $\supp(x_k)\subset E_k$, $\norm[x_k]=1$ and
$x^*(x_k)>\varepsilon$. It follows that every convex combination $\sum a_k
x_k$ satisfies
\begin{equation}\label{110}
\norm[\sum  a_k x_k]>\varepsilon .
\end{equation}
Next for $j$ sufficiently large
such that $\frac{4}{\varepsilon m_{2j}}<\varepsilon$ we
define $y_1,y_2,\ldots,y_{n_{2j}}$ a
$(\frac{2}{\varepsilon},\frac{1}{n_{2j}})$ R.I.S. of $\ell_1$ averages
 and each $y_i$ is some average of $(x_k)_k$.
Proposition \ref{ris} (1) yields that
\begin{equation}\label{109}
\norm[\frac{1}{n_{2j}}(y_1+y_2+\cdots+y_{n_{2j}})]\leq
\frac{4}{m_{2j}\varepsilon}<\varepsilon.
\end{equation}
Clearly (\ref{109}) contradicts (\ref{110}) and the basis is
shrinking.
 \end{proof}

{\bf The structure of $\mc{B}(X_{ius})$}

 \begin{definition}\label{depseq}
A sequence $\chi=(x_1,f_1,x_2,f_2,\ldots,x_{n_{2j+1}},f_{n_{2j+1}})$
is said to be a {\bf depended sequence of length} $\mathbf{n_{2j+1}}$
if the following conditions are fulfilled
\begin{enumerate}
\item[(i)] There exists $\phi=(x_1,f_1,y_2,f_2,\ldots,x_{2i-1},f_{2i-1},
y_{2i},f_{2i},\ldots ,y_{n_{2j+1}},f_{n_{2j+1}})$ special sequence of length $n_{2j+1}$ such that
$\supp(y_{2i})=\supp(x_{2i})$ and
$\norm[y_{2i}-x_{2i}]\leq \frac{1}{n_{j_{2i}}^2}$
where for $1\leq i< n_{2j+1}$, $j_{i+1}=\sigma(\phi_i)$.
\item[(ii)] For $i\leq n_{2j+1}/2$ we have that
$$
x_{2i}= \frac{c_{2i}}{n_{j_{2i}}}\sum\limits_{l=1}^{n_{j_{2i}}}x^{2i}_l
$$
where $(x^{2i}_l)_l$ is a $(3,\frac{1}{n_{j_{2i}}})$ R.I.S. of $\ell^1$ averages, $c_{2i}\in(0,1)$.
\item[(iii)] $f_{2i}(x_{2i})\geq \frac{1}{12 m_{j_{2i}}}$.
\end{enumerate}
\end{definition}
The following is a consequence of the previous results, and we sketch the proof of it.
\begin{lemma}\label{412}
Let $(y_k)_k$ be a normalized block sequence in $X_{ius}$ and
$(e_n)_{n\in M}$ be a subsequence of its basis. Then for all $j\in\mathbb{N}$
there exists a depended sequence
$$
\chi=(x_1,f_1,x_2,f_2,\ldots,x_{n_{2j+1}},f_{n_{2j+1}})
$$ of length
 $n_{2j+1}$ such that for each $i\le n_{2j+1}/2$, $x_{2i-1}\in \langle e_n\rangle_M$
 and $x_{2i}\in \langle y_k\rangle_k$.
 \end{lemma}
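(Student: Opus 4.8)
The plan is to construct the depended sequence $\chi$ inductively, building the two interlocking halves simultaneously. The construction must respect the coding function $\sigma$: at each stage $i$, the integer $j_{i+1}=\sigma(\phi_i)$ that governs the length of the averages $x_{i+1}$ is only determined \emph{after} the initial segment $\phi_i$ has been fixed. So the natural approach is a finite recursion of length $n_{2j+1}$ in which, having chosen $(x_1,f_1,\ldots,x_{2i},f_{2i})$, I read off $j_{2i+1}=\sigma(\phi_{2i})$ from the coding and then produce the next block vectors and functionals. First I would set up the base case: choose $x_1=\frac{1}{n_{2j_1}}\sum_{l}e_{1,l}$ and $f_1=\frac{1}{m_{2j_1}}\sum_l e^*_{1,l}$ as in \eqref{co1}, where the basis vectors $e_{1,l}$ are taken from $\langle e_n\rangle_M$ and $j_1$ is large enough that $m^{1/2}_{2j_1}>n_{2j+1}$; condition \eqref{es1} on the coding and the requirement $\supp(x_{2i-1})\subset\langle e_n\rangle_M$ force me to work far enough out in $M$ at each odd step.

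The main work is the even steps, where I must produce $x_{2i}\in\langle y_k\rangle_k$ satisfying (ii) and (iii) of Definition~\ref{depseq}. Here I would invoke Proposition~\ref{p18} together with the Remark following Proposition~\ref{ris}: from the normalized block sequence $(y_k)_k$ I extract a $(3,\frac{1}{n_{j_{2i}}})$ R.I.S. of $\ell_1$ averages $(x^{2i}_l)_l$, supported far enough to the right to maintain the successiveness required by $\mathbf{Q_s}$, and set $x_{2i}=\frac{c_{2i}}{n_{j_{2i}}}\sum_l x^{2i}_l$ after normalizing by an appropriate $c_{2i}\in(0,1)$. Proposition~\ref{ris}(3) gives $\frac{1}{m_{2j}}\le\norm[\frac{1}{n_{2j}}\sum x_l]\le\frac{6}{m_{2j}}$, so a suitable $c_{2i}$ makes $\norm[x_{2i}]=1$; for (iii) I select $f_{2i}$ to be a normalizing functional in $K$ of weight $m_{j_{2i}}$ that nearly achieves the norm of the average, and the lower bound in Proposition~\ref{ris}(3) guarantees $f_{2i}(x_{2i})\ge\frac{1}{12 m_{j_{2i}}}$ while \eqref{co3} in Definition~\ref{21} is respected.

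The subtle point is reconciling the depended sequence with the underlying \emph{special} sequence $\phi$ demanded by condition (i), since the definition of a special sequence in \eqref{c02} rigidly prescribes the odd vectors $y_{2i-1}$ as basis averages of length $n_{\sigma(\phi_{2i-2})}$, whereas the even vectors $y_{2i}$ in $\phi$ must be genuine elements of $\mathbf{Q}$ closely approximating my block averages $x_{2i}$. The hard part will be arranging $\supp(y_{2i})=\supp(x_{2i})$ with $\norm[y_{2i}-x_{2i}]\le\frac{1}{n_{j_{2i}}^2}$: since $x_{2i}$ generally has irrational coordinates, I replace it by a rational approximant $y_{2i}\in\mathbf{Q}$ of the same support, using the density of $\mathbf{Q}$ in the unit ball of $c_{00}$ restricted to a fixed finite support, and the approximation can be made as fine as $\frac{1}{n_{j_{2i}}^2}$. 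I would then verify that this rationalization does not disturb the coding constraints \eqref{es2}, \eqref{es1} nor the norm estimate (iii), choosing the odd indices so that $m^{1/2}_{2j_1}>n_{2j+1}$ bounds the whole sequence; the injectivity of $\sigma$ and its growth properties keep the successive weights $m_{j_{2i}}=m_{\sigma(\phi_{2i-1})}$ strictly increasing, so the whole recursion terminates after $n_{2j+1}/2$ even stages and yields the desired $\chi$.
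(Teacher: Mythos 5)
Your overall scheme (alternate long basis averages from $\langle e_n\rangle_M$ at odd steps with R.I.S.\ averages of $\ell_1$ averages from $\langle y_k\rangle_k$ at even steps, read the next index off the coding $\sigma$, and replace each even vector by a rational approximant of the same support to produce the underlying special sequence) is exactly the paper's argument. However, there is a genuine error at the even steps: you insist that "a suitable $c_{2i}$ makes $\norm[x_{2i}]=1$." This is arithmetically impossible and, worse, incompatible with the definitions you must verify. By Proposition~\ref{ris} the unscaled average $\frac{1}{n_{j_{2i}}}\sum_l x^{2i}_l$ has norm at most $\frac{6}{m_{j_{2i}}}$, so forcing norm one would require $c_{2i}\ge \frac{m_{j_{2i}}}{6}>1$, contradicting the requirement $c_{2i}\in(0,1)$ in Definition~\ref{depseq}(ii). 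More fatally, condition (i) of Definition~\ref{depseq} demands that the perturbed sequence $\phi$ (with $y_{2i}$ in place of $x_{2i}$) be a \emph{special} sequence, and \eqref{co3} imposes the upper bound $|f_{2i}(y_{2i})|\le \frac{1}{m_{\sigma(\phi_{2i-1})}}=\frac{1}{m_{j_{2i}}}$. If $\norm[x_{2i}]=1$ and $f_{2i}$ "nearly achieves the norm of the average," then $|f_{2i}(y_{2i})|\ge |f_{2i}(x_{2i})|-\norm[y_{2i}-x_{2i}]$ is of order $1$, exceeding the allowed bound by a factor of order $m_{j_{2i}}$; your $\phi$ is then not a special sequence, and the construction collapses. (The same smallness is what makes $K_\phi$ nonempty, via the constraint $|g(x_{2i})|\le\frac{1}{m_{\sigma(\phi_{2i-1})}}$ inside \eqref{ek0}, which is needed when the depended sequence is actually used in Proposition~\ref{pd}.)

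The point you are missing is that the scaling constant exists precisely to keep the even vectors \emph{small}, not normalized. The paper takes $c_{2i}=\frac{1}{6}\bigl(1-\frac{m_{j_{2i}}}{n_{j_{2i}}^2}\bigr)$, so that Proposition~\ref{ris} gives $\norm[x_{2i}]\le \frac{1}{m_{j_{2i}}}-\frac{1}{n_{j_{2i}}^2}$; the rational approximant then satisfies $\norm[y_{2i}]\le\norm[x_{2i}]+\frac{1}{n_{j_{2i}}^2}\le\frac{1}{m_{j_{2i}}}$, and since every $g\in K$ has dual norm at most one, the bound $|g(y_{2i})|\le \frac{1}{m_{j_{2i}}}$ (hence \eqref{co3}) holds automatically for \emph{every} admissible functional, not just a cleverly chosen one. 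The lower bound (iii) is not a consequence of Proposition~\ref{ris}(3) applied to some nearly-norming functional; it comes from building $f_{2i}=\frac{1}{m_{j_{2i}}}\sum_l f^2_l$ where each $f^2_l\in K$ satisfies $\range(f^2_l)\subset\range(x^{2i}_l)$ and $f^2_l(x^{2i}_l)\ge\frac{2}{3}$, which yields $f_{2i}(x_{2i})\ge \frac{2c_{2i}}{3m_{j_{2i}}}\ge \frac{1}{12 m_{j_{2i}}}$ while remaining under the cap $\frac{1}{m_{j_{2i}}}$. With this correction --- small $x_{2i}$, componentwise-normed $f_{2i}$ --- the rest of your induction goes through as you describe.
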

\begin{proof}
Let $j_{1}\in\mathbb{N}$, $j_{1}$ even such that $m^{1/2}_{j_{1}}>n_{2j+1}$. We set
$$
x_{1}=\frac{1}{n_{j_{1}}}\sum_{i=1}^{n_{j_1}}e_{1,i}\,\,\,\text{and}
\,\,\,\,\,
f_{1}=\frac{1}{m_{j_{1}}}\sum_{i=1}^{n_{j_1}}e^{*}_{1,i}\,,
$$
such that $x_{1}\in\langle e_{n}\rangle_{M}$. Let
$j_{2}=\sigma(x_{1},f_{1})$. Using Proposition~\ref{p18} we choose
an $(3,\frac{1}{n_{j_{2}}})$ R.I.S,
$(x_{l}^{2})_{l=1}^{n_{j_{2}}}\in \langle y_{k}\rangle_{k}$ such
that $x_{1}<x^{2}_{l}$ for every $l\leq n_{j_{2}}$. Next we choose
for every $l\leq n_{j_{2}}$ a functional $f^{2}_{l}\in K$ such
that $f^{2}_{l}(x^{2}_{l})\geq
\frac{2}{3}\norm[x^{2}_{l}]\geq\frac{2}{3}$ and
$\text{range}(f^{2}_{l})\subset\text{range}(x^{2}_{l})$. We set
$$
f_{2}=\frac{1}{m_{j_{2}}}\sum_{l=1}^{n_{j_{2}}}f^{2}_{l}\,\,\,
\text{and}\,\,\,\
x_{2}=\frac{c_{2}}{n_{j_{2}}}\sum_{l=1}^{n_{j_2}}x^{2}_{l}
\,\,\,\,\,\text{where}\,\,\, c_{2}=
\frac{1}{6}(1-\frac{m_{j_{2}}}{n^{2}_{j_{2}}})\,\,.
$$
From Proposition~\ref{ris}, it follows that $\norm[x_{2}]\leq
(\frac{1}{m_{j_{2}}}-\frac{1}{n^{2}_{j_{2}}})$. We also have that
\begin{equation}\label{l412e1}
f_{2}(x_{2})\geq
\frac{1}{m_{j_{2}}}\frac{c_2}{n_{j_2}}
\sum_{l=1}^{n_{j_{2}}}f^{2}_{l}(x^{2}_{l})
\geq\frac{2}{3}\frac{c_{2}}{m_{j_{2}}}\geq\frac{1}{12m_{j_{2}}}
\,\,.
\end{equation}
We choose $y_{2}\in\mathbf{Q}$, that is $y_{2}$ is a finite
sequence with  rational coordinates, such that
$\norm[y_{2}-x_{2}]\leq\frac{1}{n^{2}_{j_{2}}}$ and
$\text{supp}(y_{2})=\text{supp}(x_{2})$. It follows that
$\norm[y_{2}]\leq \frac{1}{m_{j_2}}$ and therefore
$(x_{1},f_{1},y_{2},f_{2})$ is a special sequence of length $2$.

We set $j_{3}=\sigma(x_{1},f_{1},y_{2},f_{2})$ and we choose $$
x_{3}=\frac{1}{n_{j_{3}}}\sum_{l=1}^{n_{j_{3}}}e_{3,l}\,\,\,
\text{and}\,\,\,
f_{3}=\frac{1}{m_{j_{3}}}\sum_{l=1}^{n_{j_{3}}}e^{*}_{3,l}\,,
$$
such that
$\text{range}(y_{2})\cup\text{range}(f_{2})<\text{range}(x_{3})$
and $x_{3}\in\langle e_{n}\rangle_{M}$. Next we choose
$x_{4},f_{4}$ and $y_{4}$ as in the second step, and it is clear
that the procedure goes through up to the choice of $x_{n_{2j+1}},
f_{n_{2j+1}}$ and $y_{n_{2j+1}}$.
\end{proof}
\begin{remark}\label{413}
a) Let us observe that the proof of Lemma~\ref{412} yields that if
$\chi=(x_1,f_1,x_2,f_2,\ldots,x_{n_{2j+1}},f_{n_{2j+1}})$ is a
depended sequence, then for every $i\leq n_{2j+1}/2$ it holds that
$x_{2i}=\frac{c_{2i}}{n_{j_{2i}}}\sum_{l=1}^{n_{j_{2i}}}x_{l}^{2i}$,
where $(x^{2i}_{l})_{l}$ is a $(3,n_{j_{2i}})-R.I.S.$,
$j_{2i}=\sigma(\phi_{2i-1})$ and $c_{2i}\leq \frac{1}{6}$. It
follows from Proposition~\ref{ris}  that
\begin{center}
$\norm[m_{j_{2i}}x_{2i}]\leq 1$, and also if $f\in K$ and
$w(f)<m_{j_{2i}}$ then, $f(m_{j_{2i}}x_{2i})\leq\frac{2}{w(f)}$.
\end{center}

b) Definition \ref{depseq} essentially describes that a depended sequence is a small perturbation  of a special sequence.
Its necessity occurs from the restriction in the definition of
the special sequence $\phi=(x_1,f_1,\ldots, x_{n_{2j+1}},f_{n_{2j+1}})$
that each $x_i\in \mathbf{Q}$ (i.e. $x_i(n)$ is a rational number)
not permitting to find such elements $x_i$ in every block subspace.
 \end{remark}

 Next we state the basic estimations of averages related to depended sequences.
\begin{lemma}\label{depest} Let
$\chi=(x_1,f_1,x_2,f_2,\ldots,x_{n_{2j+1}},f_{n_{2j+1}})$
be a depended sequence of length  $n_{2j+1}$.
Then the following inequality holds:
$$ \norm[\frac{1}{n_{2j+1}}\sum\limits_{i=1}^{n_{2j+1}}
  (-1)^{i+1}m_{j_i}x_i] \leq \frac{8}{m_{2j+1}^3}
$$
where $m_{j_i}=w(f_i)$.
\end{lemma}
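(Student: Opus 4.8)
The plan is to set $z_i=m_{j_i}x_i$ and to recognize $\frac{1}{n_{2j+1}}\sum_i(-1)^{i+1}z_i$ as a signed average of a rapidly increasing sequence, reducing the whole estimate to part 2) of Proposition~\ref{ris}. First I would check that $(z_i)_{i=1}^{n_{2j+1}}$ is a $(2,\varepsilon)$-R.I.S. with $\varepsilon\le 1/n_{2j+1}$. Condition (a) holds since $\norm[z_i]\le 1$: for even $i$ this is Remark~\ref{413}, and for odd $i$ the vector $x_i$ is an average of $n_{j_i}$ basis vectors, so $\norm[m_{j_i}x_i]\le 1$ by Lemma~\ref{42}. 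Condition (c) is exactly the second estimate of Remark~\ref{413} for even indices and the case $w(f)<m_{j_i}$ of Lemma~\ref{42} for odd indices, both with constant $2$. Finally condition (b) follows from \eqref{es1}, which gives $\#\supp(x_i)\le\max\range(x_i)\le m_{j_{i+1}}^{1/2}$, together with $m_{j_1}^{1/2}>n_{2j+1}$ (condition \eqref{co1} for a sequence of length $n_{2j+1}$); hence $\#\range(z_i)/m_{j_{i+1}}\le m_{j_1}^{-1/2}<1/n_{2j+1}$.

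Granting this, Proposition~\ref{ris}(2) applied with $j_0=2j+1$, $C=2$ and coefficients $b_i=(-1)^{i+1}$ gives directly
$$\norm[\frac{1}{n_{2j+1}}\sum_{i=1}^{n_{2j+1}}(-1)^{i+1}m_{j_i}x_i]\le\frac{4\cdot 2}{m_{2j+1}^3}=\frac{8}{m_{2j+1}^3},$$
provided the hypothesis d) of the basic inequality (Proposition~\ref{bin}) holds for $j_0=2j+1$. Thus the entire content of the lemma is the verification of d): for every interval $E$ and every $f\in K$ with $w(f)=m_{2j+1}$,
$$\Bigl|f\bigl(\sum_{i\in E}(-1)^{i+1}z_i\bigr)\Bigr|\le 2\bigl(1+\tfrac{1}{n_{2j+1}}\#E\bigr).$$
This is the one genuinely new estimate, and it is where I expect all the difficulty to lie.

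To prove d) I would fix such an $f$, write it through a special sequence $\psi=(u_1,g_1,\dots,u_{n_{2j+1}},g_{n_{2j+1}})$ as $f=\frac{1}{m_{2j+1}}\sum_i(\lambda_{g'_{2i}}g_{2i-1}+g'_{2i})$ (absorbing the interval and the sign), and compare $\psi$ with the special sequence $\phi$ underlying the depended sequence $\chi$. Since $\sigma$ is injective and satisfies \eqref{es2}, equality of two inner weights $m_{\sigma(\psi_{2i-1})}=m_{\sigma(\phi_{2l-1})}$ forces $\psi_{2i-1}=\phi_{2l-1}$, hence (by the length and ordering of the two sequences) $i=l$ and coincidence of the initial segments. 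So there is a largest $i_0$ with $\phi_{2i_0-1}=\psi_{2i_0-1}$, and the sum splits at $i_0$. On the common part $i\le i_0$ one has $g_{2i-1}=f_{2i-1}$ with $f_{2i-1}(z_{2i-1})=1$, while $\lambda_{g'_{2i}}=g'_{2i}(m_{j_{2i}}y_{2i})$ differs from $g'_{2i}(z_{2i})$ only by the perturbation $\norm[y_{2i}-x_{2i}]\le n_{j_{2i}}^{-2}$ built into Definition~\ref{depseq}; because of the alternating signs the paired term $\lambda_{g'_{2i}}g_{2i-1}(z_{2i-1})+g'_{2i}(z_{2i})$ cancels up to an error $\lesssim m_{j_{2i}}/n_{j_{2i}}^2$, and these errors, together with the at most two boundary overlaps per level, sum to $O(1)$.

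On the divergent part $i>i_0$ the injectivity of $\sigma$ guarantees that none of the weights occurring in the trees of $g_{2i-1},g'_{2i}$ inside a given $\range(x_k)$ equals the internal weight $m_{j_k}$; the cubic gain \eqref{basise2} of Lemma~\ref{42} (and its averaged form, Proposition~\ref{ris}, for the even blocks) then bounds each such piece by $|g(x_k)|\lesssim m_{j_k}^{-3}$, so that $|f(z_k)|=m_{j_k}|f(x_k)|\lesssim m_{j_k}^{-2}$ and the divergent contribution is $\le\sum_k 2m_{j_k}^{-2}=O(1)$ by the lacunarity of $(m_{j_k})$. Collecting the two parts yields $|f(\sum_{i\in E}(-1)^{i+1}z_i)|=O(1)$, which is the bound required in d). The main obstacle is precisely this divergent-part analysis: one must argue carefully that after the two special sequences separate, no sub-functional of $f$ can resonate (i.e. have weight exactly $m_{j_k}$) with the block $x_k$ it meets, so that \eqref{basise2} rather than the weaker \eqref{basise1} applies; handling the single transitional index $i_0$ and the boundary terms uniformly in $E$ is the other delicate point.
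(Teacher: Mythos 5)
Your overall skeleton coincides with the paper's: set $z_i=m_{j_i}x_i$, check that $(z_i)_i$ is a $(2,\tfrac{1}{n_{2j+1}})$-R.I.S. (the paper verifies a), c) via Lemma~\ref{42} and Remark~\ref{413}a), and b) via the properties of $\sigma$, exactly as you do), and reduce the whole lemma to condition d) of Proposition~\ref{bin} for $j_0=2j+1$, after which Proposition~\ref{ris}(2) with $C=2$ gives $\frac{8}{m_{2j+1}^3}$. Your treatment of the common part of the two special sequences (cancellation of $\lambda_{g'_{2i}}g_{2i-1}(z_{2i-1})-g'_{2i}(z_{2i})$ using $f_{2i-1}(m_{j_{2i-1}}x_{2i-1})=1$, the perturbation $\norm[y_{2i}-x_{2i}]\le n_{j_{2i}}^{-2}$, and the degenerate value $\pm n_{2j+1}^{-2}$) is the paper's \eqref{c4}--\eqref{c6}. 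The gap is in the divergent part, and it is a genuine one, in two respects.

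First, your claim that injectivity of $\sigma$ guarantees \emph{no} resonance is false: what injectivity yields (and what the paper proves) is that \emph{at most one} index $k\ge i_1$ can have $w(g_k)\in\{m_{j_i}:i_1\le i\le n_{2j+1}\}$. A single resonance can genuinely occur, and the paper estimates that one block separately by $1+\frac{1}{n_{2j+1}}$ (inequality \eqref{r6}). Second, and more fundamentally, even for non-resonant blocks the cubic gain \eqref{basise2} is not available: it requires a tree of the acting functional with no node of weight $m_{j_k}$ at \emph{any} level, whereas $\sigma$-injectivity controls only the top-level weights $w(g_{2k-1})$, $w(g'_{2k})$. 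The functionals $g'_{2k}$ are arbitrary elements of $K$ with prescribed weight and support, and nothing in Definition~\ref{21} prevents their trees from containing nodes of weight $m_{j_k}$ sitting over $\range(x_k)$ -- for every $k$ simultaneously. If you then retreat to \eqref{basise1}, you only get $|f(z_k)|\le \frac{2}{m_{2j+1}}$ per block, and summing over up to $n_{2j+1}$ blocks with the coefficients $\frac{1}{n_{2j+1}}$ gives $\frac{2}{m_{2j+1}}$, which is much larger than the bound of order $\frac{C}{n_{2j+1}}$ that condition d) demands (recall $n_{2j+1}\gg m_{2j+1}$). The paper's way around this is Lemma~\ref{pd1}: rather than estimating the whole functional on each block through properties of its tree, it estimates each \emph{component} $g_{2k-1}$, $g'_{2k}$ on the block using only its top weight, via \eqref{basise1} and Proposition~\ref{ris}(1), and then sums over the at most $n_{2j+1}$ components using the lacunarity $m_{i+1}=m_i^5$ and the lower bound $w(g_1)>n_{2j+1}^2$; this yields $\frac{1}{n_{2j+1}}$ per non-resonant block with no hypothesis whatsoever on internal tree weights. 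That per-component summation (and the separate handling of the unique resonant block) is the idea missing from your divergent-part argument.
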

\begin{lemma}\label{ld}
Let $\phi=(x_{1},f_{1},\ldots,x_{n_{2j+1}},f_{n_{2j+1}})$ be a
\textit{special sequence}. For every $i\leq n_{2j+1}/2$, let
$\sigma(x_{1},f_{1},\ldots,x_{2i-1},f_{2i-1})=j_{2i}$ and let
$y_{2i}=
\frac{m_{j_{2i}}}{n_{j_{2i}}} \sum\limits_{l=1}^{n_{j_{2i}}}e_{k_{l}}$ be
such that
$$
\text{supp}(f_{2i})\cap\text{supp}(y_{2i})=
\emptyset\,\,\,\text{and}\,\,\,
\text{supp}(f_{2i-1})<\text{supp}(y_{2i})<\text{supp}(f_{2i+1})\,.
$$
Then it holds that
$$
\norm[\frac{1}{n_{2j+1}}\sum_{i=1}^{n_{2j+1}/2}y_{2i}] \leq
\frac{8}{m_{2j+1}^{3}}\,\,.
$$
\end{lemma}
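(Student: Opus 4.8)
The plan is to recognise $\frac{1}{n_{2j+1}}\sum_{i=1}^{n_{2j+1}/2}y_{2i}$ as (essentially) the average of a rapidly increasing sequence and to apply Proposition~\ref{ris}(2) with $j_0=2j+1$; the only delicate point is the verification of hypothesis (d) of the basic inequality (Proposition~\ref{bin}), and this is exactly where the disjointness $\supp(f_{2i})\cap\supp(y_{2i})=\emptyset$ and the injectivity of the coding $\sigma$ enter. Write $u_k:=y_{2k}=m_{j_{2k}}z_k$, where $z_k=\frac{1}{n_{j_{2k}}}\sum_l e_{k_l}$ is a $\frac{1}{n_{j_{2k}}}$-average of basis vectors.

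First I would record the R.I.S. properties of $(u_k)$. Applying Lemma~\ref{42} to $z_k$ gives $\norm[z_k]\le\frac{1}{m_{j_{2k}}}$, hence $\norm[u_k]\le 1$, and for every $f\in K$ with $w(f)<m_{j_{2k}}$ one has $|f(u_k)|=m_{j_{2k}}|f(z_k)|\le\frac{2}{w(f)}$. By \eqref{es2} the indices $j_2<j_3<\cdots$ increase, and since $m_{j_1}^{1/2}>n_{2j+1}>m_{2j+1}$ all the weights $m_{j_{2k}}$ exceed $m_{2j+1}$. Moreover $n_{j_{2k}}=\#\supp(y_{2k})\le\max\supp(y_{2k})<\min\supp(f_{2k+1})\le m_{\sigma(\phi_{2k+1})}^{1/2}=m_{j_{2k+2}}^{1/2}$ by \eqref{es1}, so $\frac{n_{j_{2k}}}{m_{j_{2k+2}}}\le m_{j_{2k+2}}^{-1/2}\le\frac{1}{n_{2j+1}}$. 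Taking the R.I.S. indices $j'_k=j_{2k}$ and $\varepsilon=\frac{1}{n_{2j+1}}$, this shows that $(u_k)$ is a $(2,\varepsilon)$-R.I.S. with $\varepsilon\le 1/n_{2j+1}$, so conditions (a)--(c) of Proposition~\ref{bin} hold.

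The heart of the proof is hypothesis (d) for $j_0=2j+1$: for every interval $E$, scalars $|b_k|\le 1$, and $f\in K$ with $w(f)=m_{2j+1}$, I must bound $|f(\sum_{k\in E}b_k u_k)|$. Such an $f$ equals $\frac{\pm 1}{m_{2j+1}}E'(\lambda_{g'_2}g_1+g'_2+\cdots)$ for a special sequence $\psi=(z_1,g_1,\ldots)$ of length $n_{2j+1}$. For each $k$ write $|f(u_k)|=m_{j_{2k}}|(E_kf)(z_k)|$ with $E_k=\range(y_{2k})$, so that $E_kf\in K$. If the tree of $E_kf$ avoids the weight $m_{j_{2k}}$, then \eqref{basise2} gives $|f(u_k)|\le\frac{2}{m_{j_{2k}}^2}\le\varepsilon$; in every case $w(f)=m_{2j+1}<m_{j_{2k}}$ together with \eqref{basise1} yields the crude bound $|f(u_k)|\le\frac{2}{m_{2j+1}}\le 1$. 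Thus it suffices to prove that $m_{j_{2k}}$ occurs effectively in the analysis of $E_kf$ for at most one $k$. Let $k_0$ be the largest $k$ for which $\psi$ and $\phi$ agree on their first $2k-1$ coordinates. For $k>k_0$ we have $\psi_{2k-1}\neq\phi_{2k-1}$, so by injectivity of $\sigma$ the weight $m_{j_{2k}}=m_{\sigma(\phi_{2k-1})}$ cannot be produced at any node meeting $\range(y_{2k})$; for $k<k_0$ the coordinates agree through position $2k$, whence $g_{2k}=f_{2k}$ and the unique node of weight $m_{j_{2k}}$ is $g'_{2k}$, whose support $\supp(g'_{2k})=\supp(f_{2k})$ is disjoint from $\supp(y_{2k})$, so its restriction to $E_k$ vanishes. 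Hence only $k=k_0$ can be resonant, giving $|f(\sum_{k\in E}b_k u_k)|\le\max_k|b_k|+\varepsilon\sum_k|b_k|$, i.e.\ (d) with constant $C=2$.

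With (a)--(d) in hand, Proposition~\ref{ris}(2) applied to $(u_k)$ (after harmlessly padding it to length $n_{2j+1}$ with zero coefficients) gives $\norm[\frac{1}{n_{2j+1}}\sum_{i=1}^{n_{2j+1}/2}y_{2i}]\le\frac{4C}{m_{2j+1}^3}=\frac{8}{m_{2j+1}^3}$, as required; the argument runs in parallel with that of Lemma~\ref{depest}. The main obstacle is the tree bookkeeping in (d): one must be certain that the weight $m_{j_{2k}}$ is not reproduced deep inside the analysis of $f$ for $k\neq k_0$, which rests on the injectivity of $\sigma$ together with the growth conditions \eqref{es2} and \eqref{es1}, exactly as in the corresponding computation of \cite{AT}.
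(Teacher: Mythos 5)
Your overall strategy is exactly the paper's: verify hypotheses a)--d) of the basic inequality (Proposition \ref{bin}) for the sequence $(y_{2i})_i$ with $C=2$, $\varepsilon=1/n_{2j+1}$ and $j_0=2j+1$, then quote Proposition \ref{ris}(2); your verification of a)--c) is sound. The gap is in d), precisely at the step you yourself call the heart of the proof. For the non-resonant $k$ you invoke \eqref{basise2}, whose hypothesis is that some tree of $E_kf$ contains \emph{no} node of weight $m_{j_{2k}}$, and you justify this by injectivity of $\sigma$ (``the weight $m_{j_{2k}}=m_{\sigma(\phi_{2k-1})}$ cannot be produced at any node meeting $\range(y_{2k})$''). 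This is false. Injectivity of $\sigma$ constrains only the weights of the functionals $g_{2i-1},g'_{2i}$ appearing at the first level below the root of the special functional $f$. A node of weight $m_{j_{2k}}$ is merely some $(\mathcal{A}_{n_{\sigma(\phi_{2k-1})}},\frac{1}{m_{\sigma(\phi_{2k-1})}})$-combination of elements of $K$; it need not arise from any special sequence, so nothing prevents such a node from sitting two or more levels deep inside some $g'_{2i}$, whose support can perfectly well meet $\supp(y_{2k})$ once $\psi$ and $\phi$ have diverged. Hence the hypothesis of \eqref{basise2} can fail for every $k$, and your dichotomy collapses. (A smaller instance of the same problem occurs for $k<k_0$: there the top-level node $g'_{2k}$ \emph{does} have weight $m_{j_{2k}}$; disjointness of $\supp(g'_{2k})=\supp(f_{2k})$ from $\supp(y_{2k})$ gives $g'_{2k}(y_{2k})=0$, but it does not make $E_kg'_{2k}$ vanish, since $\supp(f_{2k})$ may be interleaved with $\supp(y_{2k})$ inside the interval $\range(y_{2k})$; so even in the agreement case \eqref{basise2} is not literally applicable.)

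The estimate you want is true, but it has to be obtained the way the paper obtains it, namely through Lemma \ref{pd1}(a), which is designed precisely to tolerate deep resonant nodes. There one estimates the action on $y_{2i}=\frac{m_{j_{2i}}}{n_{j_{2i}}}\sum_l e_{k_l}$ of each first-level functional $h\in\{g_{2k-1},g'_{2k}\}$ \emph{separately}, using \eqref{basise1}: when $w(h)<m_{j_{2i}}$ the bound $\frac{2}{w(h)m_{j_{2i}}}$ already absorbs any nodes of weight $m_{j_{2i}}$ hidden inside $h$ (they pay the factor $\frac{1}{w(h)}$ of $h$'s own root), and when $w(h)>m_{j_{2i}}$ the bound $\frac{1}{w(h)}$ is negligible. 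Injectivity of $\sigma$ is used only for what it actually yields: at most one first-level functional $g_k$, $k\geq i_1$, has weight in $\{m_{j_i}: i_1\leq i\leq n_{2j+1}\}$, and that single term is estimated crudely by $1+\frac{1}{n_{2j+1}}$ as in \eqref{l3}. Summing over the remaining distinct, rapidly increasing weights (all exceeding $n_{2j+1}^2$) gives the $\frac{1}{n_{2j+1}}$ bound of \eqref{l2}, hence condition d), after which Proposition \ref{ris}(2) finishes exactly as you said. So your proof becomes correct once the appeal to \eqref{basise2} is replaced by this per-functional application of \eqref{basise1}, i.e.\ by Lemma \ref{pd1}(a).
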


These two lemmas are the key ingredients for proving the main
results for the structure of $X_{ius}$ and $\mathcal{B}(X_{ius})$.
We proceed with the proof of the main results and we will provide
the proof of the two lemmas at the end.

\begin{proposition}\label{pd}
Let $M\in [\mathbb{N}]$ and let $(y_{k})_{k}$ be a normalized block
sequence. Then we have that
$$
\text{dist}(S_{\langle e_{n}\rangle_{M}}, S_{\langle
y_{k}\rangle_k})=0\,.
$$
\end{proposition}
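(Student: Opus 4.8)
The plan is to produce, for every $\varepsilon>0$, unit vectors $x\in S_{\langle e_n\rangle_M}$ and $y\in S_{\langle y_k\rangle_k}$ with $\|x-y\|<\varepsilon$, by exploiting a single depended sequence that straddles both subspaces. First I would fix a large $j$ and invoke Lemma~\ref{412} to obtain a depended sequence $\chi=(x_1,f_1,\ldots,x_{n_{2j+1}},f_{n_{2j+1}})$ of length $n_{2j+1}$ whose odd vectors lie in $\langle e_n\rangle_M$ and whose even vectors lie in $\langle y_k\rangle_k$. Writing $m_{j_i}=w(f_i)$, I would set
\[
P=\frac{1}{n_{2j+1}}\sum_{i\text{ odd}}m_{j_i}x_i\in\langle e_n\rangle_M,\qquad
N=\frac{1}{n_{2j+1}}\sum_{i\text{ even}}m_{j_i}x_i\in\langle y_k\rangle_k,
\]
so that $P-N$ is precisely the alternating average controlled by Lemma~\ref{depest}; hence $\|P-N\|\le 8/m_{2j+1}^3$, which will be the small quantity driving the whole estimate.

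The heart of the argument is a lower bound $\|N\|\ge c/m_{2j+1}$ for an absolute constant $c$. To get it I would test $N$ against the canonical special functional
\[
g^{*}=\frac{1}{m_{2j+1}}\sum_{i=1}^{n_{2j+1}/2}\bigl(\lambda_{f_{2i}}f_{2i-1}+f_{2i}\bigr)
\]
attached to the special sequence underlying $\chi$ (taking $E=\mathbb{N}$, sign $+1$ and $f'_{2i}=f_{2i}$ in the definition of $K_\phi$, so that $g^{*}\in K_\phi\subset K$). Since each $f_{2i}$ is supported in $\range(x_{2i})$, while $\range(f_{2i-1})<\range(x_{2i})$ and $\range(x_{2i'})\cap\range(x_{2i})=\emptyset$ for $i'\neq i$, every cross term vanishes and $g^{*}(x_{2i})=\tfrac{1}{m_{2j+1}}f_{2i}(x_{2i})$. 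Using $f_{2i}(x_{2i})\ge\tfrac{1}{12m_{j_{2i}}}$ from Definition~\ref{depseq}(iii) gives $g^{*}(m_{j_{2i}}x_{2i})\ge\tfrac{1}{12m_{2j+1}}$, and averaging over the $n_{2j+1}/2$ even indices yields $\|N\|\ge g^{*}(N)\ge\tfrac{1}{24m_{2j+1}}$.

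Finally I would combine the two estimates. From $\|P-N\|\le 8/m_{2j+1}^3$ and $\|N\|\ge 1/(24m_{2j+1})$, and since $8/m_{2j+1}^3\le 1/(48m_{2j+1})$, one obtains $\|P\|\ge 1/(48m_{2j+1})$; the standard normalization inequality then gives
\[
\Bigl\|\frac{P}{\|P\|}-\frac{N}{\|N\|}\Bigr\|\le\frac{2\|P-N\|}{\|P\|}\le\frac{768}{m_{2j+1}^2},
\]
so that $x=P/\|P\|$ and $y=N/\|N\|$ are unit vectors in the two spheres at distance $O(1/m_{2j+1}^2)$. Letting $j\to\infty$ forces the distance to $0$, which is the assertion. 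The hard part will be the lower bound on $\|N\|$: one must verify carefully that the choice $f'_{2i}=f_{2i}$ meets every requirement in the definition of $K_\phi$ (in particular the norm conditions on the even coordinates), and that evaluating $g^{*}$ on the \emph{actual} vectors $x_{2i}$ — rather than on the rational perturbations $y_{2i}$ built into $\phi$ — still isolates exactly the diagonal terms $f_{2i}(x_{2i})$ governed by Definition~\ref{depseq}(iii).
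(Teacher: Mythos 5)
Your proof is correct and is essentially the paper's own argument: both rest on Lemma~\ref{412} to produce a depended sequence straddling $\langle e_n\rangle_M$ and $\langle y_k\rangle_k$, on Lemma~\ref{depest} to make the alternating average small, and on the special functional attached to the underlying special sequence to obtain a lower bound of order $1/m_{2j+1}$ before normalizing. The one small deviation is in your favor: the paper evaluates that functional on both the odd and the even averages, which forces it to bound $\lambda_{f_{2i}}=f_{2i}(m_{j_{2i}}y_{2i})>1/24$ via the perturbation $\Vert x_{2i}-y_{2i}\Vert\le 1/n_{j_{2i}}^2$, whereas you only need the even-part estimate --- where the $\lambda$'s never enter (the odd functionals annihilate the even vectors by disjointness of ranges) and where Definition~\ref{depseq}(iii) is stated directly for the $x_{2i}$, so the verification you flag at the end is immediate --- and you then recover the lower bound for the odd part by the triangle inequality.
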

\begin{proof}
For a given $\varepsilon>0$ we choose $j\in\mathbb{N}$
such that $\frac{8}{m_{2j+1}^{2}}<\varepsilon$.
 From Lemma \ref{412} there exists a depended sequence
$\chi=(x_{1},f_{1},\ldots,x_{n_{2j+1}},f_{n_{2j+1}})$
such that
$x_{2i-1}\in \langle e_{n}\rangle_{M}$, $x_{2i}\in
\langle y_{k}\rangle_{k}$ for every $i\leq n_{2j+1}/2$.
Set
$$
e=\frac{m_{2j+1}}{n_{2j+1}}\sum\limits_{i=1}^{n_{2j+1}/2}
m_{j_{2i-1}}x_{2i-1}\;\text{ and }\;
y=\frac{m_{2j+1}}{n_{2j+1}}\sum\limits_{i=1}^{n_{2j+1}/2}
m_{j_{2i}}x_{2i}\,.
$$
We have that $e\in\langle e_{n}:n\in M\rangle$
and $y\in\langle y_{i}:i\in M\rangle$.
 From Lemma~\ref{depest} we have that  $\norm[e-y]\leq
\frac{8}{m_{2j+1}^{2}}$.
To obtain a lower estimation of the norm of $e$
and $y$ we consider the functional
$f=\frac{1}{m_{2j+1}}\sum\limits_{i=1}^{n_{2j+1}/2}
\lambda_{f_{2i}}f_{2i-1}+ f_{2i}$ where
$\lambda_{f_{2i}}=f_{2i}(m_{j_{2i}}y_{2i})$
and $\phi=(x_1,f_1,y_2,f_2,\ldots,y_{n_{2j+1}},f_{n_{2j+1}})$
is the special sequence associated to the depended sequence $\chi$.
 From the definition of the depended sequence, $f_{2i}(m_{j_{2i}}x_{2i})\geq\frac{1}{12}$, and
$\norm[x_{2i}-y_{2i}]\leq\frac{1}{n_{j_{2i}}^2}$ for every
$i\leq n_{2j+1}/2$.
It follows that
$$
\lambda_{f_{2i}}=f(m_{j_{2i}}y_{2i})\geq f(m_{j_{2i}}x_{2i})-m_{j_{2i}}\norm[x_{2i}-y_{2i}]
>\frac{1}{12}-\frac{1}{m^{2}_{j_{2i}}} >\frac{1}{24}\,.
$$

Therefore
\begin{equation}\label{ll1}
\Vert e\Vert\geq f(e)=\frac{m_{2j+1}}{m_{2j+1}}
\sum_{i=1}^{n_{2j+1}/2}\frac{
\lambda_{f_{2i}}f_{2i-1}(m_{j_{2i-1}}x_{2i-1})}{n_{2j+1}}
\geq\frac{1}{48}\,,
\end{equation}
and
\begin{equation}
\Vert y\Vert \geq f(y)=\frac{m_{2j+1}}{m_{2j+1}}
\sum_{i=1}^{n_{2j+1}/2}\frac{
f_{2i}(m_{j_{2i}}x_{2i})}{n_{2j+1}}\geq\frac{1}{24}\,. \label{ll2}
\end{equation}

These lower estimations and the fact that
 $\norm[e-y]\leq \frac{8}{m_{2j+1}^{2}}$ easily yields the desired result.
\end{proof}

\begin{lemma}\label{lemmaT}
 Let $T:X_{ius}\to X_{ius}$ be a bounded operator. Then
$$
\lim_{n}\text{dist}(Te_{n},\mathbb{R}e_{n})=0\,.
$$
\end{lemma}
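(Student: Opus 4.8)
The plan is to argue by contradiction. Suppose the conclusion fails; then there are $\varepsilon>0$ and an infinite set $L$ with $\operatorname{dist}(Te_n,\mathbb{R}e_n)>\varepsilon$ for $n\in L$. Put $\lambda_n=e_n^*(Te_n)$; since $|\lambda_n|\le\|T\|$ I may pass to a subsequence of $L$ along which $\lambda_n\to\lambda$ and replace $T$ by the bounded operator $R:=T-\lambda I$, noting that $\operatorname{dist}(Te_n,\mathbb{R}e_n)=\operatorname{dist}(Re_n,\mathbb{R}e_n)$. Thus $\|Re_n\|>\varepsilon$ while the $n$-th coordinate $e_n^*(Re_n)=\lambda_n-\lambda$ tends to $0$. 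Approximating each $Re_n$ by a finitely supported vector and passing to a sparse subsequence $(n_k)$, I obtain a seminormalized block sequence $(w_k)$ with $\|Re_{n_k}-w_k\|$ as small as I wish, $\|w_k\|>\varepsilon/2$, and with the supports of the $e_{n_k}$ and of the $w_k$ arranged so that they can serve as the two halves of a special sequence. (If the bulk of $Re_n$ sits to the left of $n$ rather than to the right, the symmetric variant is used; the two cases are identical.)

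Next I would reproduce the construction behind Proposition~\ref{pd}. Applying Lemma~\ref{412} with $M=\{n_k\}$ and the normalized block sequence $(w_k/\|w_k\|)$ produces a depended sequence $\chi=(x_1,f_1,\dots,x_{n_{2j+1}},f_{n_{2j+1}})$ with $x_{2i-1}\in\langle e_n\rangle_M$ and $x_{2i}\in\langle w_k\rangle_k$, together with its associated special functional $f=\frac{1}{m_{2j+1}}\sum_i(\lambda_{f_{2i}}f_{2i-1}+f_{2i})$. Exactly as in the lower estimate for $\|y\|$ in the proof of Proposition~\ref{pd}, the vector $y=\frac{m_{2j+1}}{n_{2j+1}}\sum_i m_{j_{2i}}x_{2i}\in\langle w_k\rangle_k$ then satisfies $f(y)\ge\frac{1}{24}$.

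The contradiction is engineered as follows. I arrange the even blocks so that each $x_{2i}$ is, up to a controlled error, the $R$-image of a domain vector $\hat x_{2i}\in\langle e_n\rangle_M$, and set $\hat\eta=\frac{m_{2j+1}}{n_{2j+1}}\sum_i m_{j_{2i}}\hat x_{2i}\in\langle e_n\rangle_M$, so that $R\hat\eta=y+\rho$ with $\|\rho\|$ small (the error absorbs the quantities $\|Re_{n_k}-w_k\|$, the discarded tails, and $\sup_k|\lambda_{n_k}-\lambda|$). On one hand, since the weights $m_{j_{2i}}=m_{\sigma(\phi_{2i-1})}$ are all distinct from $m_{2j+1}$, the family $(m_{j_{2i}}\hat x_{2i})_i$ is a rapidly increasing sequence for which condition (d) of the basic inequality (Proposition~\ref{bin}) holds with $j_0=2j+1$; Proposition~\ref{ris}(2) then yields $\|\hat\eta\|\le C/m_{2j+1}^2$. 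On the other hand $\|R\hat\eta\|\ge f(R\hat\eta)\ge f(y)-\|\rho\|\ge\frac{1}{24}-o(1)$. Hence $\|R\|\ge\|R\hat\eta\|/\|\hat\eta\|\ge c\,m_{2j+1}^2$, which tends to infinity with $j$, contradicting the boundedness of $R=T-\lambda I$.

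The step I expect to be the main obstacle is the transport of the $\ell_1$-average / R.I.S.\ structure across the operator in the third paragraph. I must produce vectors $\hat x_{2i}\in\langle e_n\rangle_M$ that are simultaneously (i) a R.I.S.\ of $\ell_1$-averages on the domain side, so that Proposition~\ref{ris}(2) forces $\|\hat\eta\|$ to be small, and (ii) whose images $R\hat x_{2i}$ are seminormalized $\ell_1$-averages in $\langle w_k\rangle_k$, so that the functionals $f_{2i}$ norm them from below and $f(y)\ge\frac{1}{24}$ survives. Because $R$ may distort averages, this forces a careful simultaneous choice: I would build the averages on the basis side via Proposition~\ref{p18}, use the lower estimate coming from property (iii) of $K$ (disjointness of the blocks $w_k$) to rule out cancellation of their images, and invoke the injectivity of the coding $\sigma$ together with \eqref{es2} and \eqref{es1} — as in Lemma~\ref{ld} — to secure condition (d) for $j_0=2j+1$. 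Controlling the cumulative error $\rho$ so that $|f(\rho)|=o(1)$, and verifying that the interleaved supports of $(e_{n_k})$ and $(w_k)$ genuinely satisfy the successivity required of a special sequence, are the remaining technical points.
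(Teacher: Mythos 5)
Your overall strategy coincides with the paper's: assume the distances stay bounded below, build a special-sequence configuration whose even entries come from images of vectors in $\langle e_n\rangle_M$, get a lower bound of order $1/m_{2j+1}$ for the image of a domain vector from the special functional, an upper bound of order $1/m_{2j+1}^3$ for the domain vector itself from the conditional estimate (condition d) of Proposition~\ref{bin}), and contradict boundedness as $j\to\infty$. Your device of subtracting $\lambda I$ with $\lambda=\lim e_n^*(Te_n)$ is a legitimate alternative to the paper's device (choosing $x_n^*\in K$ with $x_n^*(Te_n)\ge\delta$ and $\range(x_n^*)\cap\range(e_n)=\emptyset$, as in \eqref{eq}); both serve to make the norming functionals blind to the domain coordinates. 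However, there is a genuine gap, and it sits exactly where you flagged it: you construct the even terms $x_{2i}$ by applying Lemma~\ref{412} to the block sequence $(w_k)$, i.e.\ as R.I.S.\ of $\ell_1$-averages \emph{in the range side}, and then pull them back through $R$ to get $\hat x_{2i}$. Proposition~\ref{p18} is a pure existence statement: the coefficients of those averages are dictated by the norm structure of $\langle w_k\rangle_k$, so the preimages $\hat x_{2i}=\sum_k a_k e_{n_k}$ inherit no usable structure. Conditions (a) and (c) of Proposition~\ref{bin} for the family $(m_{j_{2i}}\hat x_{2i})_i$ — e.g.\ $\norm[m_{j_{2i}}\hat x_{2i}]\le C$ with $C$ an absolute constant — have no reason to hold: boundedness of $R$ gives only $\norm[\hat x_{2i}]\ge\norm[x_{2i}]/\norm[R]$, an inequality in the wrong direction, while the coefficients $a_k$ (controlled only by $\sim 1/\varepsilon$, with an enormous number of terms) allow $\norm[\hat x_{2i}]$, hence $\norm[\hat\eta]$, to be large. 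So the appeal to Proposition~\ref{ris}(2) to conclude $\norm[\hat\eta]\le C/m_{2j+1}^2$ is unjustified, and this is the crux of the argument, not a technical remainder. Moreover the repair you sketch ("build the averages on the basis side") is incompatible with the framework you use: the $R$-image of a plain basis average is a $C$-$\ell_1$ average only with $C\sim m_{2j'}/\varepsilon$, far from the fixed constant $3$ that Definition~\ref{depseq} and Lemma~\ref{412} require, so the depended-sequence machinery cannot absorb it.

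The paper resolves this by never pulling anything back through the operator. It fixes plain basis averages in the domain, defines the even entries of a genuine \emph{special} sequence (Definition~\ref{21}, not a depended sequence) as their images $x_{2i}=\frac{1}{n_{\sigma(\phi_{2i-1})}}\sum_j Te_{2i,j}$, and norms these from below by $f_{2i}=\frac{1}{m_{\sigma(\phi_{2i-1})}}\sum_j x^*_{2i,j}$, built with the $(\mathcal{A}_{n_{2j}},\frac{1}{m_{2j}})$ operation from the disjointly supported functionals $x^*_{2i,j}$; no $\ell_1$-average structure of the images is needed, only $f_{2i}(m_{\sigma(\phi_{2i-1})}x_{2i})\ge\delta$ together with the mild admissibility conditions \eqref{co3}. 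The upper bound is then obtained by applying Lemma~\ref{ld} to the domain vector $x=\frac{1}{n_{2j+1}}\sum_i y_{2i}$, where $y_{2i}=\frac{m_{\sigma(\phi_{2i-1})}}{n_{\sigma(\phi_{2i-1})}}\sum_j e_{2i,j}$ are scaled basis averages interleaved with the special sequence and satisfying $\supp(y_{2i})\cap\supp(f_{2i})=\emptyset$ — exactly the configuration for which Lemma~\ref{ld} was proved, and exactly what the disjointness of the $x^*$'s (or your diagonal subtraction) buys. To make your proof correct you would have to rewrite your second and third paragraphs along these lines, replacing Lemma~\ref{412} and Proposition~\ref{ris}(2) by a directly constructed special sequence together with Lemma~\ref{ld}.
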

\begin{proof}
Without loss of generality we may assume that $\norm[T]=1$. Since
$(e_{n})$ is weakly null, by a small perturbation of  $T$ we may
assume that $T(e_{n})$ is a finite block, $T(e_n)\in\mathbf{Q}$
and
$\min\text{supp}T(e_{n}){}_{\overset{\longrightarrow}{n}}\infty$.
Let $I(e_{n})$ be the smallest interval  containing
$\text{supp}T(e_{n})\cup\text{supp}(e_{n})$. Passing to a
subsequence $(e_{n})_{n\in M}$, we may assume that
$I(e_{n})<I(e_{m})$ for every $n,m\in M$ with $n<m$.

If the result is not true, we may assume, on passing to a further
subsequence, that there exists $\delta>0$ such that $$
\text{dist}(Te_{n},\mathbb{R}e_{n})>2\delta\,\,\,\,\text{for
every}\,n\in M\,. $$ It follows that $\Vert
P_{n-1}Te_{n}\Vert>\delta$ or $\Vert (I-P_{n})Te_{n}\Vert>
\delta$. Therefore for every $n\in M$ we can choose $x_{n}^{*}\in
K$ such that
\begin{equation}\label{eq}
x_{n}^{*}(Te_{n})\geq \delta,\,\,\,
\text{range}(x_{n}^{*})\cap\text{range}(e_{n})
 =\emptyset,\,\,\text{
and}\,\,\, \text{range}(x_{n}^{*})\subset I(e_{n})\,.
\end{equation}
Since $T$ is bounded, for every $j\in\mathbb{N}$ we have that
$$
\norm[T(\frac{1}{n_{2j}}\sum_{i=1}^{n_{2j}}e_{k_{i}})]\leq
\norm[T]\,\norm[\frac{1}{n_{2j}}\sum_{i=1}^{n_{2j}}e_{k_{i}}]=
\frac{1}{m_{2j}}\,.
$$
Also for every $j\in\mathbb{N}$ and $k_1<k_2<\cdots<k_{n_{2j}}$ in
$M$, the functional
$h_{2j}=\frac{1}{m_{2j}}\sum\limits_{i=1}^{n_{2j}}x^{*}_{k_{i}}$
is in $K$ and
$$
\norm[T(\frac{1}{n_{2j}}\sum_{i=1}^{n_{2j}}e_{k_{i}})]=
\norm[\frac{1}{n_{2j}}\sum_{i=1}^{n_{2j}}Te_{k_{i}}]\geq
h_{2j}(\frac{1}{n_{2j}}\sum_{i=1}^{n_{2j}}Te_{k_{i}})
\geq\frac{\delta}{m_{2j}}\,\,.
$$
We consider now a \textit{special sequence}
$\phi=(x_{1},f_{1},\ldots,x_{n_{2j+1}},f_{n_{2j+1}})$ which is
defined as follows: for every $i\geq 0$,
\begin{align*}
&x_{2i+1}= \frac{1}{n_{\sigma(\phi_{2i})}}
\sum_{j=1}^{n_{\sigma(\phi_{2i})}}e_{2i+1,j}\,,\quad &
f_{2i+1}=\frac{1}{m_{\sigma(\phi_{2i})}}
\sum_{j=1}^{n_{\sigma(\phi_{2i})}}e_{2i+1,j}^{*}\\
&x_{2i}= \frac{1}{n_{\sigma(\phi_{2i-1})}}
\sum_{j=1}^{n_{\sigma(\phi_{2i-1})}}Te_{2i,j}\,,
&f_{2i}=\frac{1}{m_{\sigma(\phi_{2i-1})}}
\sum_{j=1}^{n_{\sigma(\phi_{2i-1})}}x^{*}_{2i,j}
\end{align*}
where $e_{i,\ell}\in \{e_n:\;n\in M\}$,
 $x^{*}_{2i,j}$, $Te_{2i,j}$ satisfies \eqref{eq}, and
$I(e_{i,\ell})<I(e_{s,j})$ if either $i<s$  or $i=s$ and $\ell<j$.
This is possible by our assumption $I(e_{n})<I(e_{m})$ for
$n,m\in M$ with $n<m$. Observe that
$f_{2i}(m_{\sigma(\phi_{2i-1})}x_{2i})\geq \delta$ and also that
$\text{range}(f_{\ell})\cap\text{range}(x_{2i})=\emptyset$ for
every $\ell\not=2i$. Consider now the following vector:
$$
x=\frac{1}{n_{2j+1}}\sum_{i=1}^{n_{2j+1}/2}\frac{m_{\sigma
(\phi_{2i-1})}}{n_{\sigma(\phi_{2i-1})}}
\sum_{j=1}^{n_{\sigma(\phi_{2i-1})}}e_{2i,j}\,\,.
$$
Then
$$
T(x)=\frac{1}{n_{2j+1}}\sum_{i=1}^{n_{2j+1}/2}m_{\sigma
(\phi_{2i-1})}x_{2i}\,\,,
$$
and
$$
\norm[Tx]\geq\frac{1}{m_{2j+1}}
\sum_{i=1}^{n_{2j+1}/2}(\lambda_{f_{2i}}f_{2i-1}+f_{2i})
Tx\geq\frac{\delta}{2m_{2j+1}}\,\,.
$$
On the other hand, if $y_{2i}=\frac{m_{\sigma
(\phi_{2i-1})}}{n_{\sigma(\phi_{2i-1})}}
 \sum\limits_{j=1}^{n_{\sigma(\phi_{2i-1})}}e_{2i,j}
$, then we have that
 $\text{supp}(y_{2i})
\cap\text{supp}f_{2i}=\emptyset$ and $x_{2i-1}<y_{2i}<x_{2i+1}$
for every $i\leq n_{2j+1}/2$, and therefore by Lemma~\ref{ld} we
have that
$$
\norm[x]= \norm[\frac{1}{n_{2j+1}}\sum_{i=1}^{n_{2j+1}/2}y_{2i}]
\leq\frac{8}{m_{2j+1}^{3}}\,.
$$
It follows that
$\norm[T]\geq \frac{\delta}{16} m^{2}_{2j+1}$, a contradiction for
$j$ sufficiently large.
\end{proof}
\begin{proposition}\label{pss}
Let $T:X_{ius}\to X_{ius}$ be a bounded operator. Then there
exists $\lambda\in\mathbb{R}$ such that $T-\lambda I$ is strictly
singular.
\end{proposition}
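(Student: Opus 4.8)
The plan is to prove Proposition~\ref{pss} by extracting the scalar $\lambda$ from the diagonal behavior of $T$ established in Lemma~\ref{lemmaT}, and then showing that the remainder $T-\lambda I$ cannot be bounded below on any infinite dimensional subspace. First I would use Lemma~\ref{lemmaT}: since $\lim_n \text{dist}(Te_n,\mathbb{R}e_n)=0$, for each $n$ there is a scalar $\lambda_n$ with $\norm[Te_n-\lambda_n e_n]\to 0$. The natural candidate for $\lambda$ is the limit of the $\lambda_n$, so the first task is to show that $(\lambda_n)_n$ converges. To see this, I would argue that if $\lambda_n$ and $\lambda_m$ stayed far apart along a subsequence, one could build a block vector on which $T$ acts incompatibly with any single scalar multiple of the identity, contradicting that $Te_n-\lambda_n e_n$ is small while $e_n$ is weakly null; more directly, the failure of convergence would let us produce two subsequences forcing $T$ to behave like two different scalars simultaneously, which is impossible for a bounded operator. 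Once $\lambda=\lim_n\lambda_n$ is fixed, set $S=T-\lambda I$, so that $\norm[Se_n]\to 0$.

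Next I would reduce strict singularity of $S$ to a statement about block subspaces. Since $\norm[Se_n]\to 0$ and $(e_n)$ is a basis, a standard gliding-hump / small-perturbation argument shows that on any block subspace $S$ is approximated by an operator sending normalized blocks to vectors of small norm; the key is to show that for every normalized block sequence $(z_k)$ one has $\inf_k\norm[Sz_k]=0$, equivalently $S$ is not bounded below on $\langle z_k\rangle$. Because strict singularity is equivalent to the operator being not bounded below on any infinite dimensional subspace, and every infinite dimensional subspace contains a normalized block sequence (as $(e_n)$ is a basis), it suffices to handle block subspaces.

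The main obstacle, and the place where the special structure of $X_{ius}$ enters, is showing that $S=T-\lambda I$ is indeed not bounded below on an arbitrary normalized block sequence $(z_k)$. Here the plan is to mimic the averaging computation of Lemma~\ref{lemmaT} and Proposition~\ref{pd}: one constructs from $(z_k)$ a depended sequence $\chi=(x_1,f_1,\ldots,x_{n_{2j+1}},f_{n_{2j+1}})$ with the $x_{2i}$ drawn from $\langle z_k\rangle$, and considers the averaged vector $x=\frac{m_{2j+1}}{n_{2j+1}}\sum_i m_{j_{2i}}x_{2i}$. On one hand Lemma~\ref{depest} forces $\norm[x]$ to be small relative to $m_{2j+1}$; on the other hand, $Sx=Tx-\lambda x$, and since $\norm[Se_n]\to 0$ the contribution of $T$ to the action of the associated special functional on each basis average is negligible for large $j$, so that the special functional detects $Sx$ to be of size comparable to $\frac{1}{m_{2j+1}}\cdot\frac{1}{\text{const}}$ while $\norm[x]\le \frac{8}{m_{2j+1}^3}$. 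This mismatch, letting $j\to\infty$, yields normalized vectors $w$ in $\langle z_k\rangle$ with $\norm[Sw]\to 0$, proving that $S$ is not bounded below. The delicate point will be verifying that replacing $T$ by the scalar $\lambda$ on the blocks $x_{2i}$ introduces only an error controlled by $\sup_{n}\norm[Se_n]$ over the relevant range of supports, so that the averaged estimate survives; this is exactly where the smallness of $\norm[Se_n]$ combined with the quantitative bounds of Proposition~\ref{ris} and Lemma~\ref{depest} must be balanced against the $1/m_{2j+1}$ lower bound.
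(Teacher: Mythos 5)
Your first two steps are sound and agree with the paper's opening: Lemma~\ref{lemmaT} produces scalars $\lambda_n$ with $\norm[Te_n-\lambda_n e_n]\to 0$, and since $(\lambda_n)_n$ is bounded one passes to a subsequence $M$ along which $\lambda_n\to\lambda$; the reduction of strict singularity to block subspaces is standard. (Two remarks here: you do not need convergence of the whole sequence $(\lambda_n)_n$, a subsequence suffices; and your justification for full convergence --- that a bounded operator cannot ``behave like two different scalars'' on two subsequences --- is false in a general Banach space (consider a diagonal $0$--$1$ projection on $\ell_2$); in $X_{ius}$ it holds only as a consequence of Proposition~\ref{pd}, i.e. of the very structure you are in the middle of exploiting, not of boundedness alone.)

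The genuine gap is in your third paragraph, where all the content lies. The vector $x=\frac{m_{2j+1}}{n_{2j+1}}\sum_i m_{j_{2i}}x_{2i}$ is \emph{not} small: the special functional associated to the depended sequence gives $\norm[x]\ge 1/24$ --- this is exactly inequality \eqref{ll2} in the paper's proof of Proposition~\ref{pd} --- so your claimed estimate $\norm[x]\le 8/m_{2j+1}^3$ is false. What Lemma~\ref{depest} bounds is the \emph{alternating} sum $\frac{1}{n_{2j+1}}\sum_{i=1}^{n_{2j+1}}(-1)^{i+1}m_{j_i}x_i$, equivalently (after multiplying by $m_{2j+1}$) the distance $\norm[e-x]\le 8/m_{2j+1}^2$ between $x$ and the companion vector $e=\frac{m_{2j+1}}{n_{2j+1}}\sum_i m_{j_{2i-1}}x_{2i-1}$ built from the odd-indexed basis averages. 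That vector $e$ never appears in your sketch, yet it carries the entire transfer mechanism: choosing the $x_{2i-1}$ inside $\langle e_n\rangle_{n\in M}$, where $M$ has been refined so that $\norm[Se_{n_k}]\le\varepsilon 2^{-k}$ (whence $S$ restricted to $[e_{n_k}]$ has norm at most $2\varepsilon$), one gets $\norm[Sx]\le\norm[Se]+\norm[S]\,\norm[e-x]$ small while $\norm[x]\ge 1/24$, which is precisely the statement that $S$ is not bounded below on $\langle z_k\rangle$. Your version instead has the special functional detecting $Sx$ of size about $1/m_{2j+1}$ while $\norm[x]\le 8/m_{2j+1}^3$; these two estimates together would give $\norm[S]\ge c\, m_{2j+1}^2\to\infty$, i.e. they would ``prove'' that the bounded operator $S$ is unbounded, and in any case a lower bound on $\norm[Sx]$ combined with an upper bound on $\norm[x]$ can never produce the vectors $w$ with $\norm[Sw]\to 0$ that you want. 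The repair is exactly Proposition~\ref{pd}, which the paper proves with this depended-sequence machinery and then simply cites: the unit spheres of $\langle e_n\rangle_M$ and of any block subspace are at distance zero, so smallness of $S$ on a basis subsequence transfers to every block subspace by perturbation; your sketch gestures at this proposition but never actually deploys it, because it mislocates both the upper and the lower norm estimates.
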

\begin{proof}
By Lemma~\ref{lemmaT} there exists $\lambda\in\mathbb{R}$ and
$M\in [\mathbb{N}]$ such that $\lim_{n\in M}\norm[Te_{n}-\lambda
e_{n}]=0$. Let $\varepsilon>0$. Passing to a further subsequence $(e_{n_{k}})_{k}$,
we may assume that  $\norm[Te_{n_{k}}-\lambda e_{n_{k}}]\leq \varepsilon
2^{-k}$ for every $k\in\mathbb{N}$. It follows that the restriction of
$T-\lambda I$ to $[e_{n_{k}}, k\in\mathbb{N}]$ is of norm less than
$\varepsilon$. By Proposition~\ref{pd} it follows that $T-\lambda
I$ is strictly singular.
\end{proof}
The following two corollaries are consequences of
Proposition~\ref{pss} (see \cite{GM}).
\begin{corollary}
There does not  exist a non trivial projection $P:X_{ius}\to
X_{ius}$.
\end{corollary}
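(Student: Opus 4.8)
The plan is to deduce the statement directly from Proposition~\ref{pss} together with the elementary fact that the identity operator on an infinite dimensional space is not strictly singular. Suppose $P:X_{ius}\to X_{ius}$ is a bounded projection, so $P^2=P$. By Proposition~\ref{pss} we may write $P=\lambda I+S$ with $\lambda\in\mathbb{R}$ and $S$ strictly singular. First I would exploit the idempotent relation to pin down $\lambda$: expanding $P^2=P$ gives
\[
(\lambda^2-\lambda)I=(1-2\lambda)S-S^2.
\]
Since the strictly singular operators form a closed two-sided ideal in $\mathcal{B}(X_{ius})$, the right hand side is strictly singular, whence $(\lambda^2-\lambda)I$ is strictly singular. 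As $X_{ius}$ is infinite dimensional, $I$ is not strictly singular (its restriction to any infinite dimensional closed subspace is an isomorphism), so we must have $\lambda^2-\lambda=0$, that is $\lambda\in\{0,1\}$.

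Next I would treat the two cases. If $\lambda=0$ then $P=S$ is itself strictly singular, and if $\lambda=1$ then $I-P=-S$ is strictly singular. In either case one of the two complementary projections $P$, $I-P$ is strictly singular, so it suffices to record that a strictly singular projection $Q$ necessarily has finite dimensional range. Indeed, writing $Y=Q(X_{ius})=\{x:Qx=x\}$, the restriction $Q|_Y$ is the identity of $Y$; were $Y$ infinite dimensional this would exhibit $Q$ acting as an isomorphism on an infinite dimensional closed subspace, contradicting strict singularity. Hence $Y$ is finite dimensional.

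Applying this to the strictly singular member of $\{P,I-P\}$ shows that either $P$ has finite dimensional range (case $\lambda=0$) or $\ker P=(I-P)(X_{ius})$ is finite dimensional, i.e. $P$ has finite codimensional range (case $\lambda=1$). Thus the decomposition $X_{ius}=P(X_{ius})\oplus\ker P$ always has a finite dimensional summand, so no projection splits $X_{ius}$ into two infinite dimensional closed subspaces; this is precisely the assertion that $X_{ius}$ admits no non trivial projection. I do not expect any genuine obstacle here, since the entire analytic content has already been invested in Proposition~\ref{pss}. The only points needing a word of justification are that strict singularity is preserved under the algebraic operations used above and the simple observation that an idempotent acting as an isomorphism on its range must have finite dimensional range; both are standard.
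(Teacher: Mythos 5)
Your proof is correct and is exactly the standard deduction the paper has in mind: the paper gives no details, stating only that the corollary is a consequence of Proposition~\ref{pss} with a reference to \cite{GM}, and your argument (pinning $\lambda\in\{0,1\}$ via the ideal property of strictly singular operators, then noting a strictly singular projection has finite dimensional range) is precisely that standard route.
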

\begin{corollary}
The space $X_{ius}$ is not isomorphic to any proper subspace of
it.
\end{corollary}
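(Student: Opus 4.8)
The plan is to argue by contradiction, using the structural description of operators furnished by Proposition~\ref{pss} together with the classical stability of the Fredholm index under strictly singular perturbations. Suppose $X_{ius}$ were isomorphic to a proper subspace; then there is a proper closed subspace $Y\subsetneq X_{ius}$ and an isomorphism of $X_{ius}$ onto $Y$, which we regard as an into-isomorphism $T:X_{ius}\to X_{ius}$ that is bounded below, injective, and has $\range(T)=Y\subsetneq X_{ius}$. By Proposition~\ref{pss} we may write $T=\lambda I+S$ with $S$ strictly singular, and the aim is to show that the mere existence of such a $T$ forces $\range(T)=X_{ius}$, contradicting the properness of $Y$.

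First I would dispose of the degenerate case $\lambda=0$. If $\lambda=0$ then $T=S$ is strictly singular, so $T$ fails to be bounded below on every infinite dimensional subspace of $X_{ius}$; but an into-isomorphism is bounded below on all of $X_{ius}$, which is itself infinite dimensional. This is impossible, so necessarily $\lambda\neq0$. For $\lambda\neq0$ the key step is to observe that $\lambda^{-1}S$ is again strictly singular, that the identity $I$ is Fredholm of index $0$, and hence that $I+\lambda^{-1}S$, and therefore $T=\lambda(I+\lambda^{-1}S)$, is Fredholm of index $0$. Consequently $\dim\ker T-\operatorname{codim}\range(T)=0$ with both quantities finite. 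Since $T$ is injective we have $\ker T=\{0\}$, so $\operatorname{codim}\range(T)=0$; that is, $T$ is surjective and $Y=\range(T)=X_{ius}$, contradicting $Y\subsetneq X_{ius}$. This is precisely the scheme used for H.I.\ spaces in \cite{GM}.

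The one point requiring care, and the only genuine obstacle, is the appeal to Fredholm theory: one must know that a strictly singular operator lies in the perturbation class of the (upper semi-)Fredholm operators and that the index is preserved, so that $\lambda I+S$ with $\lambda\neq0$ is honestly Fredholm of index $0$, in particular with closed and finite-codimensional range. This is Kato's perturbation theorem and is not specific to $X_{ius}$; everything else in the argument reduces to the already established fact (Proposition~\ref{pss}) that every bounded operator on $X_{ius}$ is a strictly singular perturbation of a scalar multiple of the identity.
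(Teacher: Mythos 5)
Your proof is correct and is essentially the argument the paper intends: the paper states this corollary as a consequence of Proposition~\ref{pss} with a citation to \cite{GM}, and the standard argument there is exactly your scheme --- write $T=\lambda I+S$, rule out $\lambda=0$ because a strictly singular operator cannot be bounded below, and for $\lambda\neq 0$ invoke Kato's theorem that strictly singular perturbations preserve the Fredholm property and index, so the injective operator $T$ has index $0$ and is therefore surjective.
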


It remains to prove lemmas  \ref{depest} and \ref{ld}.
We start with the following.
\begin{lemma}\label{pd1}
Let $j\in\mathbb{N}$,
$n_{2j+1}<m_{j_{1}}<m_{j_{2}}<\ldots<m_{j_{2r}}$ be such that
$2r\leq n_{2j+1}<m_{j_{1}}^{1/2}.$ Let also $j_{0}\in\mathbb{N}$
be such that $m_{j_{0}}\not= m_{j_{i}}$ for every $i=1,\ldots,2r$
and $m^{1/2}_{j_{0}}>n_{2j+1}$. Then if $h_{1}<\ldots<h_{2r}\in K$
are such that $w(h_{i})=m_{j_{i}}$ for every $i=1,\ldots,2r$, then

a)
\begin{equation}\label{e1pd1}
\vert(\sum_{k=1}^{r}\lambda_{2k-1}h_{2k-1}+h_{2k})
(\frac{m_{j_{0}}}{n_{j_{0}}}\sum_{l=1}^{n_{j_{0}}}e_{k_l})\vert
<\frac{1}{n_{2j+1}}\,,
\end{equation}
for every choice of real numbers $(\lambda_{2k-1})_{k=1}^{r}$
with $\vert\lambda_{2k-1}\vert\leq 1$ for every $k\leq r$.

b) If $(x_{l})_{l=1}^{n_{j_{0}}}$ is a
$(3,\frac{1}{n_{j_{0}}})-$R.I.S of $\ell_{1}$ averages, then
\begin{equation}\label{e2pd1}
\vert(\sum_{k=1}^{r}\lambda_{2k-1}h_{2k-1}+h_{2k})
(\frac{m_{j_{0}}}{n_{j_{0}}}\sum_{l=1}^{n_{j_{0}}}x_{l})\vert \leq
\frac{1}{n_{2j+1}}\,,
\end{equation}
for every choice of real numbers $(\lambda_{2k-1})_{k=1}^{r}$ with
$\vert\lambda_{2k-1}\vert\leq 1$ for every $k\leq r$.
\end{lemma}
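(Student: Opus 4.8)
The plan is to treat both parts uniformly by writing $H=\sum_{k=1}^r(\lambda_{2k-1}h_{2k-1}+h_{2k})$ and, via linearity and the triangle inequality, reducing each estimate to a sum over the individual functionals. Writing $v$ for the relevant average of the basis, namely $v=\frac1{n_{j_0}}\sum_{l=1}^{n_{j_0}}e_{k_l}$ in part (a) and $v=\frac1{n_{j_0}}\sum_{l=1}^{n_{j_0}}x_l$ in part (b), and using $|\lambda_{2k-1}|\le 1$, one has $m_{j_0}|H(v)|\le\sum_{i=1}^{2r}m_{j_0}|h_i(v)|$, so it suffices to bound each term $m_{j_0}|h_i(v)|$ and add the $2r\le n_{2j+1}$ contributions. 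Since $m_{j_0}\ne m_{j_i}$ for every $i$, I would partition the indices into $A=\{i:\;m_{j_i}<m_{j_0}\}$ and $B=\{i:\;m_{j_i}>m_{j_0}\}$.

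For part (a) I would apply Lemma~\ref{42} with $j=j_0$ to each $h_i$ (recall $w(h_i)=m_{j_i}$). For $i\in A$ this gives $|h_i(v)|\le \frac{2}{m_{j_i}m_{j_0}}$, hence $m_{j_0}|h_i(v)|\le\frac{2}{m_{j_i}}$; as the $m_{j_i}$ are distinct terms of $(m_j)$ with $m_{j+1}=m_j^5$, consecutive ones satisfy $m_{j_{i+1}}\ge m_{j_i}^5$, so $\sum_{i\in A}\frac{2}{m_{j_i}}\le\frac4{m_{j_1}}<\frac4{n_{2j+1}^2}$ using $m_{j_1}>n_{2j+1}^2$. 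For $i\in B$ the crucial point is the separation of weights: $m_{j_i}>m_{j_0}$ forces $j_i\ge j_0+1$, whence $m_{j_i}\ge m_{j_0+1}=m_{j_0}^5$, and Lemma~\ref{42} yields $m_{j_0}|h_i(v)|\le \frac{m_{j_0}}{m_{j_i}}\le m_{j_0}^{-4}$; summing over the at most $n_{2j+1}$ indices of $B$ gives $\le \frac{n_{2j+1}}{m_{j_0}^4}<n_{2j+1}^{-7}$ by $m_{j_0}>n_{2j+1}^2$. Adding the two contributions lands comfortably below $\frac1{n_{2j+1}}$.

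Part (b) runs along identical lines, with Lemma~\ref{42} replaced by Proposition~\ref{ris}(1) applied with $C=3$ and $j=j_0$ (legitimate, since $(x_l)$ is a $(3,\frac1{n_{j_0}})$-R.I.S. of $\ell_1$ averages). For $i\in A$ this gives $m_{j_0}|h_i(v)|\le\frac{9}{m_{j_i}}$, handled exactly as before. For $i\in B$ it produces two summands, $\frac{3m_{j_0}}{m_{j_i}}$ and $\frac{6m_{j_0}}{n_{j_0}}$; the first is controlled by $m_{j_i}\ge m_{j_0}^5$ as above, while the new term is absorbed using $n_{j_0}=(4n_{j_0-1})^{s_{j_0-1}}\ge 2^{s_{j_0-1}}\ge m_{j_0}^3$, so that $\sum_{i\in B}\frac{6m_{j_0}}{n_{j_0}}\le n_{2j+1}\cdot\frac{6}{m_{j_0}^2}<\frac6{n_{2j+1}^3}$. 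Summing all pieces again stays below $\frac1{n_{2j+1}}$.

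The step I expect to be the main obstacle is the analysis of the indices $i\in B$: a naive use of $\|h_i\|_\infty\le 1/m_{j_i}$ merely yields $m_{j_0}|h_i(v)|<1$ per term, which is far too weak after summing $n_{2j+1}$ terms. What rescues the estimate is precisely the geometric separation $m_{j+1}=m_j^5$ of the weights, which upgrades $m_{j_i}>m_{j_0}$ to $m_{j_i}\ge m_{j_0}^5$ and makes each such term of size $O(m_{j_0}^{-4})$. In part (b) the only additional subtlety is the error term $\frac{2C}{n_{j_0}}$ of Proposition~\ref{ris}(1), whose smallness rests on $n_{j_0}$ being far larger than $m_{j_0}$; everything else is a routine consequence of the growth conditions $m_{j_1}>n_{2j+1}^2$, $m_{j_0}>n_{2j+1}^2$ and $2r\le n_{2j+1}$.
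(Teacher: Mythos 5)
Your proof is correct and follows essentially the same route as the paper's: both reduce the estimate, via the triangle inequality and $|\lambda_{2k-1}|\le 1$, to per-functional bounds (Lemma~\ref{42} for part (a), Proposition~\ref{ris}(1) with $C=3$ for part (b)), split the indices according to whether $w(h_i)$ is below or above $m_{j_0}$, and then exploit the geometric growth $m_{j+1}=m_j^5$ together with $m_{j_1},m_{j_0}>n_{2j+1}^2$ and $n_{j_0}\ge m_{j_0}^3$ to sum the contributions. Your write-up is in fact slightly more careful than the paper's (which tacitly drops the factor $m_{j_0}$ in its intermediate bound for the large-weight terms, harmlessly), but the argument is the same.
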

\begin{proof}
We shall give the proof of b) and we shall indicate the minor
changes for the proof of a).

From the estimations on the R.I.S, Proposition~\ref{ris}, for
every $k\leq 2r$ we have that
\begin{equation}\label{e4pd1}
\vert h_{k}(\frac{m_{j_{0}}}{n_{j_{0}}}\sum_{l=1}^{n_{j_{0}}}x_{l})
\vert\leq
\begin{cases}
\frac{9}{w(h_{k})},\quad &\text{if}\,\, w(h_{k})<m_{j_{0}}\\
\frac{3}{m_{r}}+\frac{6}{n_{j_{0}}},\,\,&\text{if}\,\,
w(h_{k})=m_{r}>m_{j_{0}}\,.
\end{cases}
\end{equation}
Using that $m_{j+1}=m^{5}_{j}$ for every $j$ and
$\vert\lambda_{2k-1}\vert\leq 1$ for every $k\leq r$ and
\eqref{e4pd1}, we get that
\begin{align*}
\vert(\sum_{k=1}^{r}\lambda_{2k-1}h_{2k-1}+h_{2k})
(\frac{m_{j_{0}}}{n_{j_{0}}}\sum_{l=1}^{n_{j_{0}}}x_{l})\vert
&\leq \sum_{k:w(h_{k})<m_{j_{0}}}\frac{9}{w(h_{k})}+
\sum_{r>j_{0}}\frac{3}{m_{r}}+\frac{12r}{n_{j_{0}}}
\\
&\leq
\frac{10}{w(h_{1})}+\frac{4}{m^{2}_{j_{0}}}+\frac{12r}{n_{j_{0}}}
<\frac{1}{n_{2j+1}}\,.
\end{align*}

For the proof of a)
using Lemma~\ref{42}, for the estimations on the basis
we get  the corresponding inequality to \eqref{e4pd1}, from which
 follows inequality \eqref{e1pd1}.
\end{proof}

\begin{proof}[Proof of Lemma \ref{depest}]
Let $\chi=(x_1,f_1,\ldots,x_{n_{2j+1}},f_{n_{2j+1}})$ be a
depended sequence and
$\phi=(y_1,f_1,y_2,f_2,\ldots,y_{n_{2j+1}},f_{n_{2j+1}})$ the
special sequence associated to $\chi$. In the rest of the proof we
shall assume that $\chi=\phi$. The general proof follows by slight and obvious modifications of the present proof. Hence we assume
that $\phi=(x_1,f_1,\ldots,x_{n_{2j+1}},f_{n_{2j+1}})$.

From Lemma~\ref{42} and Remark~\ref{413}a) it follows  that the
sequence $(m_{j_{i}}x_{i})_{i=1}^{n_{2j+1}}$ satisfies assumptions a), c) of the basic
inequality for  $C=2$. Furthermore the properties of the function
$\sigma$ yield that assumption  b) is also satisfied for
$\varepsilon=1/n_{2j+1}$.

The rest of the proof is devoted to
establish that the sequence $(m_{j_{i}}x_{i})_{i}$ satisfies the
crucial condition d) for $m_{j_{0}}=m_{2j+1}$ and
$(b_{i})_{i}=(\frac{(-1)^{i+1}}{n_{2j+1}})_{i}$.

First we consider  $f\in K_{\phi}$. Then  $f$ is of the form
$$
f=E(\frac{\varepsilon}{m_{2j+1}}
(\lambda_{f^{\prime}_{2}}f_{1}+f^{\prime}_{2}+\ldots +
\lambda_{f^{\prime}_{n_{2j+1}}}f_{n_{2j+1}-1}+
f^{\prime}_{n_{2j+1}})\,)\,,
$$
where $\varepsilon\in\{-1,1\}$ and $E$ an interval of
$\mathbb{N}$. Let us recall that $w(f^{\prime}_{2i})=w(f_{2i})$
and $\text{supp}(f^{\prime}_{2i})=\text{supp}(f_{2i})$  and
therefore
$\text{range}(f^{\prime}_{2i})\cap\text{range}(x_{k})=\emptyset$
for every $k\not=2i$. Let
$$
i_{0}=\min\{i\leq n_{2j+1}/2: \text{supp}(f)\cap
(\text{range}(x_{2i-1})\cup\text{range}(x_{2i}))\not=\emptyset\}\,.
$$
Then
\begin{align}
\vert f(\sum_{i=1}^{n_{2j+1}}(-1)^{i+1}m_{j_{i}}x_{i})\vert =&
\vert E\frac{1}{m_{2j+1}}\sum_{k=1}^{n_{2j+1}/2}
 (\lambda_{f^{\prime}_{2k}}f_{2k-1}+f^{\prime}_{2k})
 (\sum_{i=1}^{n_{2j+1}}
 (-1)^{i+1}m_{j_{i}}x_{i})\vert
 \leq \notag
\\
& \frac{1}{m_{2j+1}}
\vert\lambda_{f^{\prime}_{2i_{0}}} Ef_{2i_{0}-1}(m_{j_{2i_{0}-1}}x_{2i_{0}-1})-
 Ef^{\prime}_{2i_{0}}(m_{j_{2i_{0}}}x_{2i_{0}})\vert
 \label{c2}
\\
+&\frac{1}{m_{2j+1}}\vert\sum_{i=i_{0}+1}^{n_{2j+1}/2}
 (\lambda_{f^{\prime}_{2i}}f_{2i-1}(m_{j_{2i-1}}x_{2i-1})-
 f^{\prime}_{2i}(m_{j_{2i}}x_{2i}))\vert\,.
\label{c3}
\end{align}
To estimate the sum in \eqref{c2} and \eqref{c3}, we partition the
set $\{i_{0},\ldots,n_{2j+1}/2 \}$ into two sets $A$ and $B$,
where $A=\{i: f^{\prime}_{2i}(x_{2i})\not= 0\}$ and $B$ is its
complement. For every $i\in A$, $i>i_{0}$, using that
$\lambda_{f^{\prime}_{2i}}=f^{\prime}_{2i}(m_{j_{2i}}x_{2i})$, we
have that
\begin{equation}\label{c4}
\lambda_{f^{\prime}_{2i}}f_{2i-1}(m_{j_{2i-1}}x_{2i-1})-
f^{\prime}_{2i}(m_{j_{2i}}x_{2i})=
f^{\prime}_{2i}(m_{j_{2i}}x_{2i})-
f^{\prime}_{2i}(m_{j_{2i}}x_{2i}) =0\,.
\end{equation}
For every $i\in B$ we have that $f^{\prime}_{2i}(x_{2i})=0$, and
therefore, $|\lambda_{f^{\prime}_{2i}}|=\frac{1}{n_{2j+1}^{2}}$,
see \eqref{ek0}. It follows that, for every $i\in B$, $i>i_{0}$
\begin{equation}\label{c5}
\vert \lambda_{f^{\prime}_{2i}}f_{2i-1}(m_{j_{2i-1}}x_{2i-1})-
f^{\prime}_{2i}(m_{j_{2i}}x_{2i})\vert =\vert
\lambda_{f^{\prime}_{2i}}\vert =\frac{1}{n^{2}_{2j+1}}\,\,.
\end{equation}
For the sum $\vert
\lambda_{f^{\prime}_{2i_{0}}} Ef_{2i_{0}-1}(m_{j_{2i_{0}-1}}x_{2i_{0}-1})
-  Ef^{\prime}_{2i_{0}}(m_{j_{2i_{0}}}x_{2i_{0}}) \vert$
distinguishing whether or not $Ef_{2i_{0}-1}=0$ and whether
$i_{0}\in A$ or $i_{0}\in B$, it follows easily using the previous
arguments that
\begin{equation}\label{c6}
\vert
\lambda_{f^{\prime}_{2i_{0}}} Ef_{2i_{0}-1}(m_{j_{2i_{0}-1}}x_{2i_{0}-1})
-Ef^{\prime}_{2i_{0}}( m_{j_{2i_{0}}}x_{2i_{0}}) \vert\leq 1
\end{equation}
Summing up \eqref{c4}-\eqref{c6} we have that
\begin{equation}\label{c8}
\vert
f(\frac{1}{n_{2j+1}}
\sum_{i=1}^{n_{2j+1}}(-1)^{i+1}m_{j_{i}}x_{i})\vert\leq
\frac{1}{m_{2j+1}}(\frac{1}{n_{2j+1}}+\frac{1}{n^{2}_{2j+1}})<
\frac{1}{n_{2j+1}}\,.
\end{equation}

Consider now a \textit{special sequence}
$\psi=(y_{1},g_{1},y_{2},g_{2},\ldots,
y_{n_{2j+1}},g_{n_{2j+1}})$. Let $i_{1}=
\min\{i\in\{1,\ldots,n_{2j+1}\}
: y_{i}\not=x_{i}\,\,\text{or}\,\,
g_{i}\not=f_{i}\}$, and $k_{0}\in\mathbb{N}$ such that
$i_{1}=2k_{0}-1$ or $2k_{0}$.

Consider a functional $g\in K_{\psi}$ which is defined from this
special sequence. Then we have that
$$
g=E(\frac{1}{m_{2j+1}}
(\lambda_{g^{\prime}_{2}}g_{1}+g^{\prime}_{2}+ \ldots+
\lambda_{g^{\prime}_{n_{2j+1}}}g_{n_{2j+1}-1}+g^{\prime}_{n_{2j+1}})\,,
$$
where $E$ is an interval of $\mathbb{N}$ and
$w(g^{\prime}_{2i})=w(g_{2i})$ for every $i\leq n_{2j+1}/2$.
Observe that
$\text{range}(x_{i})\cap\text{range}(g_{k})=\emptyset$ for every
$i\geq i_{1}$ and every $k<i_{1}$. Let
$$
i_{0}=\min\{i\leq n_{2j+1}/2: \text{supp}(g)\cap
(\text{range}(x_{2i-1})\cup\text{range}(x_{2i}))\not=\emptyset\}\,.
$$
Let $i_{0}<k_{0}$. Then
\begin{align}
\vert g(\sum_{i=1}^{n_{2j+1}}(-1)^{i+1}m_{j_{i}}x_{i})\vert &\leq
\notag\\
&\frac{1}{m_{2j+1}} \Big( \vert
E\lambda_{g^{\prime}_{2i_{0}}} g_{2i_{0}-1}(m_{j_{2i_{0}-1}}x_{2i_{0}-1})-
Eg^{\prime}_{2i_{0}}(m_{j_{2i_{0}}}x_{2i_{0}}) \vert
 \label{re}
\\
&\qquad\qquad+ \vert \sum_{i=i_{0}+1}^{k_{0}-1}
(\lambda_{g^{\prime}_{2i}}g_{2i-1}(m_{j_{2i-1}}x_{2i-1})-
g^{\prime}_{2i}(m_{j_{2i}}x_{2i}) \vert\Big) \label{re1}
\\
& +\frac{1}{m_{2j+1}}\vert \sum_{k\geq
k_{0}}(\lambda_{g^{\prime}_{2k}}g_{2k-1}+
g^{\prime}_{2k}) (\sum_{i\geq k_{0}}
m_{j_{2i-1}}x_{2i-1}-m_{j_{2i}}x_{2i})\vert\,. \label{re2}
\end{align}
where the sum in \eqref{re1} makes sense when $i_{0}<k_{0}-1$. If $i_{0}\geq k_{0}$ we get that
\begin{align*}
\vert g(\sum_{i=1}^{n_{2j+1}}(-1)^{i+1}m_{j_{i}}x_{i})\vert &\leq
\frac{1}{m_{2j+1}}\vert E\sum_{k\geq
k_{0}}(\lambda_{g^{\prime}_{2k}}g_{2k-1}+
g^{\prime}_{2k}) (\sum_{i\geq i_{0}}
m_{j_{2i-1}}x_{2i-1}-m_{j_{2i}}x_{2i})\vert\,.
\end{align*}
The proof of the upper estimation for the two cases is almost identical, so we shall give the proof in the case $i_{0}<k_{0}$.

As in the previous case, for the sum in \eqref{re},\eqref{re1} we
have that
\begin{align}
\vert E\lambda_{g^{\prime}_{2i_{0}}}
g_{2i_{0}-1}(m_{j_{2i_{0}-1}}x_{2i_{0}-1})-&
Eg^{\prime}_{2i_{0}}(m_{j_{2i_{0}}}x_{2i_{0}}) \vert +
\notag\\
&\vert \sum_{i=i_{0}+1}^{k_{0}-1}
(\lambda_{g^{\prime}_{2i}}g_{2i-1}(m_{j_{2i-1}}x_{2i-1})-
g^{\prime}_{2i}(m_{j_{2i}}x_{2i}) \vert\leq 2\,.\label{re3}
\end{align}

To estimate  the sum in \eqref{re2}, first we observe that from
the injectivity of $\sigma$ it follows that there exists at most
one $k\geq i_{1}$ such that
$$
 w(g_{k})\in \{m_{j_{i}}: i_{1}\leq i\leq n_{2j+1}\}\,.
$$
Let $2i-1\geq i_{1}$ be such that $m_{j_{2i-1}}\not=
w(g_{k})$ for every $k\geq i_{1}$. Then
functionals $g_{2k-1},g^{\prime}_{2k}$, $k\geq k_{0}$ satisfy
 the assumptions of Lemma~\ref{pd1}, and therefore we get that
\begin{equation}\label{r4}
\vert\sum_{k\geq k_{0}}(\lambda_{g^{\prime}_{2k}}g_{2k-1}+g^{\prime}_{2k})
(m_{j_{2i-1}}x_{2i-1})\vert\leq\frac{1}{n_{2j+1}}\,.
\end{equation}
Also for every $2i\geq i_{1}$ such that $m_{j_{2i}}\not=w(g_{k})$
for every $k\geq i_{1}$, the functionals
$g_{2k-1},g^{\prime}_{2k}$, $k\geq k_{0}$ satisfy the
assumptions of Lemma~\ref{pd1}, and therefore we get that
\begin{equation}\label{r5}
\vert\sum_{k\geq k_{0}}
(\lambda_{g^{\prime}_{2k}}g_{2k-1}+g^{\prime}_{2k})
(m_{j_{2i}}x_{2i})\vert \leq\frac{1}{n_{2j+1}}\,.
\end{equation}
For the unique $i\geq i_{1}$, such that there exists $k\geq i_{1}$
and $w(g_{k})=m_{j_{i}}$, if such an $i$ exists, we have that, using
Lemma~\ref{pd1}
\begin{equation}\label{r6}
\vert\sum_{k\geq k_{0}}(\lambda_{g^{\prime}_{2k}}g_{2k-1}+
g^{\prime}_{2k}) (m_{j_{i}}x_{i})\vert\leq 1+\frac{1}{n_{2j+1}}\,.
\end{equation}
Now we distinguish if $i_{1}=2k_{0}-1$ or $i_{1}=2k_{0}$. If
$i_{1}=2k_{0}-1$, we have that
$\text{range}(g_{k})\cap\text{range}(x_{i})=\emptyset$ for every
$k<2k_{0}-1$ and every $i\geq 2k_{0}-1$, and from
\eqref{r4}-\eqref{r6} we get that
\begin{align}\label{r7}
\vert \sum_{k\geq k_{0}}
(\lambda_{g^{\prime}_{2k}}g_{2k-1}+g^{\prime}_{2k})
\Bigl(\frac{1}{n_{2j+1}}
\sum_{i=2k_{0}-1}^{n_{2j+1}}&(-1)^{i+1}m_{j_{i}}x_{i})\Bigr)
\vert \notag\\
&\leq \frac{1}{n_{2j+1}}(1+\frac{1}{n_{2j+1}}+
\frac{n_{2j+1}}{n_{2j+1}}) <\frac{3}{n_{2j+1}}\,.
\end{align}
If $i_{1}=2k_{0}$ then we have that
$\text{range}(x_{2k_{0}-1})\cap\text{range}(g_{k})=\emptyset$ for
every $k\geq 2k_{0}$ and $k<2k_{0}-1$, and from
\eqref{r4}-\eqref{r6} we get that
\begin{align}\label{r8}
\vert\sum_{k\geq k_{0}}
(\lambda_{g^{\prime}_{2k}}g_{2k-1}+g^{\prime}_{2k})
&\Bigl(\frac{1}{n_{2j+1}}
\sum_{i=2k_{0}-1}^{n_{2j+1}}(-1)^{i+1}m_{j_{i}}x_{i})\Bigr)\vert \\
&\leq \frac{1}{n_{2j+1}}\Bigl(
\vert \lambda_{g^{\prime}_{2k_{0}-1}}g_{2k_{0}-1}
(m_{j_{2k_{0}-1}}x_{2k_{0}-1})\vert \notag \\
&\qquad\qquad+ \vert\sum_{k\geq k_{0}}
(\lambda_{g^{\prime}_{2k}}g_{2k-1}+g^{\prime}_{2k}) (
\sum_{i=2k_{0}}^{n_{2j+1}}(-1)^{i+1}m_{j_{i}}x_{i})\vert\Bigr)
\notag
\\
&\leq \frac{1}{n_{2j+1}}+ \frac{1}{n_{2j+1}}
(1+\frac{1}{n_{2j+1}}+ \frac{n_{2j+1}}{n_{2j+1}})
<\frac{4}{n_{2j+1}}\,.\notag
\end{align}
From  \eqref{re3},\eqref{r7} and \eqref{r8} we get that
\begin{equation}\label{43}
 \vert
g(\frac{1}{n_{2j+1}}
\sum_{i=1}^{n_{2j+1}}(-1)^{i+1}m_{j_{i}}x_{i})\vert \leq
\frac{1}{m_{2j+1}}(\frac{2}{n_{2j+1}}+ \frac{4}{n_{2j+1}})
<\frac{1}{n_{2j+1}}\,.
\end{equation}
The inequalities \eqref{c8} and \eqref{43} yield that indeed
condition d) is satisfied for $\varepsilon=1/n_{2j+1}.$
 Proposition~\ref{ris} (2) derives the
desired result and the proof is complete.
\end{proof}
\begin{proof}[Proof of Lemma \ref{ld}]
To prove this we shall follow similar arguments as in the proof of
Lemma~\ref{depest}. We shall establish conditions a), b), c) and d)
of the basic inequality, for $C=2$,
$\varepsilon=\frac{1}{n_{2j+1}}$ and
 $m_{j_{0}}=m_{2j+1}$.
Lemma~\ref{42}  yields that the sequence $(y_{2i})_{i}$ satisfies the assumptions a) and c) of the basic inequality for $C=2$. Furthermore the properties of the function
$\sigma$ yield that assumption  b) is  also satisfied for
$\varepsilon=1/n_{2j+1}$.

To establish condition d) we shall show that for every $f\in K$
with $w(f)=m_{2j+1}$, it holds that
 $$
\vert
f(\frac{1}{n_{2j+1}}\sum_{i=1}^{n_{2j+1}/2}y_{2i})\vert \leq
\frac{1}{m_{2j+1}}(\frac{1}{n_{2j+1}}+\frac{1}{n_{2j+1}}) <
\frac{1}{n_{2j+1}}\,.
$$
First let us observe that for every $f\in K_{\phi}$,
$f=E\frac{1}{m_{2j+1}}
\sum\limits_{k=1}^{\frac{n_{2j+1}}{2}}
(\lambda_{f^{\prime}_{2k}}f_{2k-1}+f^{\prime}_{2k})$
it holds that $f(\frac{1}{n_{2j+1}}
\sum\limits_{i=1}^{n_{2j+1}/2}y_{2i})=0$.
This is due to
$\text{supp}f_{2i}^{\prime}=\text{supp}f_{2i}$
and $\text{supp}(f_{2i-1})<y_{2i}<\text{supp}(f_{2i+1})$
 for every $i\leq n_{2j+1}/2$.

Let $\phi=(z_{1},g_{1},z_{2},g_{2},\ldots,
z_{n_{2j+1}},g_{n_{2j+1}})$ be a \textit{special sequence} of
length $n_{2j+1}$ and let $f=E\frac{1}{m_{2j+1}}
\sum\limits_{k=1}^{\frac{n_{2j+1}}{2}}
(\lambda_{g^{\prime}_{2k}}g_{2k-1}+g^{\prime}_{2k})$ belonging to
$K_{\phi}$. Without loss of generality we may assume that
 $E=\mathbb{N}$. Let $i_{1}=\min\{i\leq n_{2j+1}: z_{i}\not=
x_{i}\,\,\text{or}\,\,f_{i}\not=g_{i}\}$,
 and $k_{0}\in\mathbb{N}$ such that  $i_{1}=2k_{0}-1$ or
$i_1=2k_{0}$.
Observe that
$\text{range}(g_{k})\cap\text{range}(y_{2i})=\emptyset$ for every
$k<i_{1}$ and every $2i\geq i_{1}$.

From the injectivity of
$\sigma$, it follows that there exists at most one $k\geq i_{1}$
such that
$$ w(g_{k})\in
 \{m_{j_{i}}:i_{1}\leq i \leq n_{2j+1}\}\,.
$$
Let $2i\geq i_{1}$ such that
$w(g_{k})\not=m_{j_{2i}}$ for all $k\geq i_{1}$. Then the
functionals $g_{2k-1}, g^{\prime}_{2k}$, $k\geq k_{0}$ satisfy
the assumptions of Lemma~\ref{pd1}(a), and therefore it follows that
\begin{equation}\label{l2}
\vert(\sum_{k\geq k_{0}}
\lambda_{g^{\prime}_{2k}}g_{2k-1}+g^{\prime}_{2k})(y_{2i}) \vert
 <\frac{1}{n_{2j+1}}\,.
\end{equation}
For the unique $2i\geq i_{1}$ such that there exists $k\geq i_{1}$
with $w(g_{k})=m_{j_{2i}}$, if such $2i$ exists, we have that
\begin{equation}\label{l3}
\vert(\sum_{k\geq k_{0}}
\lambda_{g^{\prime}_{2k}}g_{2k-1}+g^{\prime}_{2k})(y_{2i})
 \vert < 1+\frac{1}{n_{2j+1}}\,.
\end{equation}
Summing up \eqref{l2}-\eqref{l3} we get that
\begin{equation}\label{ee}
\vert f(\frac{1}{n_{2j+1}}\sum_{i=1}^{n_{2j+1}/2}y_{2i})\vert \leq
\frac{1}{m_{2j+1}}(\frac{1}{n_{2j+1}}+\frac{1}{n_{2j+1}})
<\frac{1}{n_{2j+1}}\,.
\end{equation}
Inequality \eqref{ee} implies that condition d) of the basic
inequality  is fulfilled, and  Proposition~\ref{ris} yields the
desired result.
\end{proof}

 \end{document}